\newenvironment{remark}{\noindent\textbf{Remark:}\hspace{1em}}{}
\newcommand{\C}{\mathbb{C}}
\newcommand{\PP}{\mathbb{P}}
\newcommand{\Z}{\mathbb{Z}}
\newcommand{\one}{\mathds{1}}
\newcommand{\G}{\mathcal{G}}
\newcommand{\map}{\textrm{Map}}
\theoremstyle{plain}
\newtheorem{thm}{Theorem}[section]
\newtheorem{prop}[thm]{Proposition}
\newtheorem{lemma}[thm]{Lemma}
\newtheorem{cor}[thm]{Corollary}
\begin{document}

\title[Homotopy types of gauge groups over 4-manifolds]{Homotopy types of gauge groups over non-simply-connected closed 4-manifolds}
\author{Tseleung So}
\address{Mathematical Sciences, University of Southampton, SO17 1BJ, UK}
\email{tls1g14@soton.ac.uk}
\thanks{}

\subjclass[2010]{Primary 54C35, 55P40; Secondary 55R10}

\keywords{homotopy decomposition, gauge groups, 4-manifolds}

\date{}

\begin{abstract}
Let $G$ be a simple, simply-connected, compact Lie group and let $M$ be an orientable, smooth, connected, closed 4-manifold. In this paper we calculate the homotopy type of the suspension of $M$ and the homotopy types of the gauge groups of principal $G$-bundles over~$M$ when $\pi_1(M)$ is (1) $\Z^{*m}$, (2) $\Z/p^r\Z$, or (3) $\Z^{*m}*(*^n_{j=1}\Z/p^{r_j}_j\Z)$, where $p$ and the $p_j$'s are odd primes.
\end{abstract}

\maketitle

\section{Introduction}
Let $G$ be a topological group and let $M$ be a topological space. Given a principal $G$-bundle $P$ over $M$, the associated gauge group $\G(P)$ is the topological group of $G$-equivariant automorphisms of $P$ which fix~$M$. Atiyah, Bott and Gottlieb \cite{AB83, gottlieb72} showed that its classifying space $B\G(P)$ is homotopy equivalent to the connected component $\map_P(M, BG)$ of the mapping space $\map(M, BG)$ which contains the map inducing $P$. When $G$ is a simple, simply-connected, compact Lie group and $M$ is an orientable, smooth, connected, closed 4-manifold, it can be shown that the set of isomorphism classes of principal $G$-bundles over $M$ is in one-to-one correspondence with the homotopy set $[M, BG]\cong\Z$. If a principal $G$-bundle corresponds to an integer $t$, then we denote its associated gauge group by $\G_t(M)$. In \cite{theriault10a} Theriault shows that when $M$ is a simply-connected spin 4-manifold, there is a homotopy equivalence
\begin{equation}\label{simply_conn_decomp}
\G_t(M)\simeq\G_t(S^4)\times\prod^n_{i=1}\Omega^2G,
\end{equation}
where $n$ is the rank of $H^2(M; \Z)$. For the non-spin case this homotopy equivalence still holds when localized away from 2. As a result, the study of $\G_t(M)$ can be reduced to that of $\G_t(S^4)$, which has been being investigated over the last twenty years. Kishimoto, Kono and Tsutaya gave bounds on the numbers of distinct homotopy types of $\G_t(S^4)$ for all $G$ using localization at odd primes \cite{KKT14}. Moreover, the homotopy types of $\G_t(S^4)$ are classified for many cases. Let~$(a, b)$ be the greatest common divisor of $a$ and $b$. Then \cite{HK06, KK18, KTT17, kono91, theriault10b, theriault15, theriault17}
\begin{itemize}
\setlength\itemsep{1em}
\item	when $G=SU(2)$, there is a homotopy equivalence $\G_t(S^4)\simeq\G_s(S^4)$ if and only if~$(12, t)=(12, s)$;
\item	when $G=SU(3)$, there is a homotopy equivalence $\G_t(S^4)\simeq\G_s(S^4)$ if and only if~$(24, t)=(24, s)$;
\item	when $G=SU(5)$, there is a $p$-homotopy equivalence $\G_t(S^4)\simeq\G_s(S^4)$ if and only if~$(120, t)=(120, s)$ for any prime $p$;
\item	when $G=Sp(2)$, there is a $p$-local homotopy equivalence $\G_t(S^4)\simeq\G_s(S^4)$ if and only if $(40, t)=(40, s)$ for any prime $p$;
\item	when $G=SU(n)$, there is a $p$-local homotopy equivalence $\G_t(S^4)\simeq\G_s(S^4)$ if and only if $(n(n^2-1), t)=(n(n^2-1), s)$ for any odd prime $p$ such that $n\leq(p-1)^2+1$;
\item	when $G=Sp(n)$, there is a $p$-local homotopy equivalence $\G_t(S^4)\simeq\G_s(S^4)$ if and only if $(4n(2n+1), t)=(4n(2n+1), s)$ for any odd prime $p$ such that $2n\leq(p-1)^2+1$;
\item	when $G=G_2$, there is a $p$-local homotopy equivalence $\G_t(S^4)\simeq\G_s(S^4)$ if and only if~$(84, t)=(84, s)$ for any odd prime $p$,
\end{itemize}
In addition, a few cases of~$\G_t(\C\PP^2)$ are worked out \cite{KT96, theriault12}:
\begin{itemize}
\setlength\itemsep{1em}
\item	when $G=SU(2)$, there is a homotopy equivalence $\G_t(\C\PP^2)\simeq\G_s(\C\PP^2)$ if and only if~$(6, t)=(6, s)$;
\item	when $G=SU(3)$, there is a $p$-local homotopy equivalence $\G_t(\C\PP^2)\simeq\G_s(\C\PP^2)$ if and only if $(12, t)=(12, s)$ for any prime $p$.
\end{itemize}

On the other hand, very little is known about $\G_t(M)$ when $M$ is non-simply-connected. The goal of this paper is to study the homotopy types of $\G_t(M)$ for certain non-simply-connected 4-manifolds. To achieve this we need a homotopy decomposition statement.

\begin{thm}\label{1_G(M) decomp}
Suppose that $G$ is a simple, simply-connected, compact Lie group and $Y$ is a CW-complex of dimension at most 3. Let $\phi:Y\to M$ be a map such that $\Sigma\phi$ has a left homotopy inverse. Then we have
\[
\G_t(M)\simeq\G_t(C_{\phi})\times\map^*(Y, G),
\]
where $C_{\phi}$ is the cofiber of $\phi$.
\end{thm}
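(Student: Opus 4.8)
The plan is to translate the statement into a splitting of mapping spaces and then show that a single connecting map vanishes. Throughout I use $B\G_t(M)\simeq\map_t(M,BG)$ and $B\G_t(C_\phi)\simeq\map_t(C_\phi,BG)$, where the subscript $t$ denotes the path component of the classifying map of the bundle corresponding to $t$. The one geometric input is the hypothesis on $\phi$. Extend the cofiber sequence to $C_\phi\xrightarrow{\delta}\Sigma Y\xrightarrow{\Sigma\phi}\Sigma M$; if $r\colon\Sigma M\to\Sigma Y$ is a left homotopy inverse of $\Sigma\phi$, then
\[
\delta\simeq(r\circ\Sigma\phi)\circ\delta\simeq r\circ(\Sigma\phi\circ\delta)\simeq *,
\]
since consecutive maps in a cofiber sequence compose trivially. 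Thus $\delta$ is null-homotopic. Applying $[-,BG]$ to $Y\xrightarrow{\phi}M\xrightarrow{q}C_\phi$ and using that $BG$ is $3$-connected (so $[Y,BG]=0$ as $\dim Y\le 3$) together with $\delta^*=0$, the map $q^*\colon[C_\phi,BG]\to[M,BG]\cong\Z$ is a bijection. Hence the bundle on $M$ labelled $t$ is pulled back along $q$ from the bundle on $C_\phi$ labelled $t$, so $\G_t(C_\phi)$ is defined and compatible with $\G_t(M)$.

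Next I would build the fibration that carries the decomposition. Applying $\map^*(-,BG)$ to the cofiber sequence gives a homotopy fibration
\[
\map^*(\Sigma Y,BG)\xrightarrow{\delta^*}\map^*(C_\phi,BG)\xrightarrow{q^*}\map^*(M,BG),
\]
and $\map^*(\Sigma Y,BG)\simeq\map^*(Y,\Omega BG)=\map^*(Y,G)$. Comparing the evaluation fibrations $\map^*_t(X,BG)\to\map_t(X,BG)\xrightarrow{\mathrm{ev}}BG$ for $X=M$ and $X=C_\phi$ along $q$—the two copies of $BG$ agreeing because $q$ preserves basepoints—the right-hand vertical map is the identity, so the homotopy fibre of the middle vertical map equals that of the left-hand one, namely $\map^*(Y,G)$. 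This gives a homotopy fibration
\[
\map^*(Y,G)\xrightarrow{\iota}B\G_t(C_\phi)\xrightarrow{q^*}B\G_t(M),
\]
in which the fibre inclusion $\iota$ is, after translating into the component labelled $t$, the composite of $\delta^*$ with $\map^*(C_\phi,BG)\hookrightarrow\map_t(C_\phi,BG)$.

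To finish I would extend this to the fibre sequence
\[
\G_t(M)\xrightarrow{\rho}\map^*(Y,G)\xrightarrow{\iota}B\G_t(C_\phi),
\]
so that $\G_t(M)$ is the homotopy fibre of $\iota$. Since $\delta\simeq *$, the induced map $\delta^*$ is null-homotopic, hence so is $\iota$; and the homotopy fibre of a null-homotopic map $A\to Z$ is $A\times\Omega Z$. Taking $A=\map^*(Y,G)$ and $\Omega Z=\Omega B\G_t(C_\phi)\simeq\G_t(C_\phi)$ gives
\[
\G_t(M)\simeq\map^*(Y,G)\times\G_t(C_\phi),
\]
which is the assertion.

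I expect the main obstacle to be the bookkeeping in the middle step rather than any hard computation. Because $BG$ is not an $H$-space, the spaces $\map(M,BG)$ and $\map(C_\phi,BG)$ do not themselves split, so the product can appear only after looping, that is, at the level of the gauge groups and not their classifying spaces; one must resist splitting $B\G_t(M)$ directly. Identifying the homotopy fibre of $q^*$ with $\map^*(Y,G)$ and, crucially, identifying the fibre inclusion $\iota$ with the map induced by the null-homotopic $\delta$ requires careful tracking of path components, and it is here that the hypotheses $\dim Y\le 3$ (forcing $[Y,BG]=0$, so that $q^*$ is a bijection on $[-,BG]$ and $\map^*(Y,BG)$ is connected) and the $3$-connectivity of $BG$ are indispensable.
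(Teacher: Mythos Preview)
Your argument is correct and follows essentially the same route as the paper. Both proofs hinge on the observation that the connecting map $\delta\colon C_\phi\to\Sigma Y$ is null-homotopic (because $\Sigma\phi$ has a left inverse), compare the evaluation fibrations for $M$ and $C_\phi$ along $q^*$, and use the resulting nullity of $\delta^*$ to split a fibration involving $\G_t(M)$, $\G_t(C_\phi)$, and $\map^*(Y,G)$. The paper organizes this by building a large fibration diagram at the gauge-group level and obtains $\G_t(C_\phi)\to\G_t(M)\xrightarrow{h}\map^*(\Sigma Y,BG)$ with $h$ admitting a right inverse, then multiplies; you instead stay at the classifying-space level, extend one step left to $\G_t(M)\to\map^*(Y,G)\xrightarrow{\iota}B\G_t(C_\phi)$, and invoke the standard fact that the fibre of a null map is a product. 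These are equivalent rearrangements of the same fibration sequence; your version is slightly slicker in that it avoids the explicit use of the group multiplication on $\G_t(M)$. The one place requiring care, which you correctly flag, is identifying $\iota$ with $\delta^*$ after translating to the $t$-component---the paper devotes a separate lemma (using the coaction of $M$ on $S^4$) to this step.
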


Using Theorem~\ref{1_G(M) decomp} we calculate the homotopy types of $\G_t(M)$ when $\pi_1(M)$ is (1) $\Z^{*m}$, (2)~$\Z/p^r\Z$, or (3) $\Z^{*m}*(*^n_{j=1}\Z/p^{r_j}_j\Z)$, where $p$ and the $p_j$'s are odd primes.

\begin{thm}\label{1_G(M) calculation}
Suppose that $G$ is a simple, simply-connected, compact Lie group and $M$ is an orientable, smooth, connected, closed 4-manifold.
\begin{itemize}
\item
If $\pi_1(M)=\Z^{*m}$ or $\Z/p^r\Z$, then $\G_t(M)$ is homotopy equivalent to a product of $\G_t(S^4)$ or $\G_t(\C\PP^2)$ and ``loop spaces'' on $G$.

\item
If $\pi_1(M)=\Z^{*m}*(*^n_{j=1}\Z/p^{r_j}_j\Z)$, then $\G_t(M)\times\prod^{2d}\Omega^2G$ is homotopy equivalent to a product of $\G_t(S^4)$ or $\G_t(\C\PP^2)$ and ``loop spaces'' on $G$ for some number $d$.
\end{itemize}
\end{thm}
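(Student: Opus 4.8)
The plan is to deduce everything from Theorem~\ref{1_G(M) decomp}. For each fundamental group I would produce a CW-complex $Y$ of dimension at most $3$ and a map $\phi\colon Y\to M$ whose suspension admits a left homotopy inverse, identify the cofiber $C_\phi$ with $S^4$ or $\C\PP^2$, and decompose $\map^*(Y,G)$ into a product of loop spaces. The hypothesis that $\Sigma\phi$ has a left homotopy inverse is precisely a suspension splitting $\Sigma M\simeq\Sigma Y\vee\Sigma C_\phi$, so the computation of the homotopy type of $\Sigma M$ promised in the abstract is exactly what feeds Theorem~\ref{1_G(M) decomp}.

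First I would build a minimal CW model of $M$ from a presentation of $\pi=\pi_1(M)$ together with Poincar\'e duality. Since $H_1\cong\pi^{\mathrm{ab}}$, and duality with universal coefficients gives $H_3\cong\mathrm{Hom}(H_1,\Z)$ and $\mathrm{tors}(H_2)\cong\mathrm{tors}(H_1)$, the cells are: a $1$-cell for each generator; a Moore $2$-cell $P^2(p_j^{r_j})=S^1\cup_{p_j^{r_j}}e^2$ for each torsion relation; $2$-spheres for the free part of $H_2$; dually, $3$-spheres for the free part of $H_3$ and Moore tops $P^3(p_j^{r_j})=S^2\cup_{p_j^{r_j}}e^3$ realizing $\mathrm{tors}(H_2)$; and a single $4$-cell. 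I would let $Y$ be the subcomplex obtained by deleting the $4$-cell when $M$ is spin, and by deleting the $4$-cell together with the $2$-cell pairing with it under $\eta$ when $M$ is non-spin, and take $\phi$ to be the inclusion; then $C_\phi\simeq S^4$ in the spin case and $C_\phi\simeq\C\PP^2$ in the non-spin case. That only a single $\C\PP^2$ can occur, and only for $M$ non-spin, is a purely $2$-primary fact, consistent with the odd torsion primes in the hypotheses.

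When $\pi=\Z^{*m}$ there are no relations, so the $2$-skeleton is $\bigvee^m S^1\vee\bigvee^b S^2$, the dual $3$-cells are wedge summands, and the deleted $4$-cell attaches by Whitehead products that suspend trivially; hence $Y\simeq\bigvee^m(S^1\vee S^3)\vee\bigvee^b S^2$ and $\Sigma\phi$ splits off $\Sigma Y$ from $\Sigma M$. When $\pi=\Z/p^r\Z$ the single relation produces $P^2(p^r)$, its dual produces $P^3(p^r)$, and $Y\simeq P^2(p^r)\vee\bigvee^b S^2\vee P^3(p^r)$. In both cases $Y$ is a genuine wedge of spheres and odd-primary Moore spaces, so
\[
\textstyle\map^*(Y,G)\simeq\prod_i\Omega^{n_i}G\times\prod_k\map^*(P^{n_k}(p^{r_k}),G),
\]
a product of the ``loop spaces on $G$'' of the statement, and Theorem~\ref{1_G(M) decomp} gives the first bullet.

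The hard part is the mixed case $\pi=\Z^{*m}*(*^n_{j=1}\Z/p^{r_j}_j\Z)$. Stably the attaching maps still split $\Sigma M$, so $\Sigma\phi$ continues to have a left homotopy inverse and Theorem~\ref{1_G(M) decomp} still applies; the genuine obstruction is that $Y$ itself is no longer homotopy equivalent to a wedge. Poincar\'e duality forces the free $3$-cells dual to the free $1$-cells to coexist with the torsion Moore cells, and the linking between these families produces nontrivial attaching maps, so $\map^*(Y,G)$ is no longer manifestly a product. I would resolve this by stabilizing: realizing $Y$ inside a cofibration with wedges of spheres yields a fibration of mapping spaces in which the obstructing classes become null after multiplying by $\prod^{2d}\Omega^2G=\map^*(\bigvee^{2d}S^2,G)$. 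Combining this with Theorem~\ref{1_G(M) decomp} then gives
\[
\textstyle\G_t(M)\times\prod^{2d}\Omega^2G\simeq\G_t(C_\phi)\times(\map^*(Y,G)\times\prod^{2d}\Omega^2G),
\]
whose bracketed factor is a product of loop spaces and $C_\phi\simeq S^4$ or $\C\PP^2$. Determining the precise value of $d$ and proving that this stabilization genuinely trivializes the linking obstruction — presumably by an argument in the spirit of the simply-connected splitting \eqref{simply_conn_decomp} — is the delicate step of the proof.
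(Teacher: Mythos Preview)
Your outline is on the right track for $\pi_1(M)=\Z^{*m}$, where the Katanaga--Matumoto result genuinely gives $M_3$ as a wedge of spheres, and the paper proceeds essentially as you describe. But there are two substantive gaps.

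\textbf{The $\Z/p^r\Z$ case.} You assert that the $3$-skeleton satisfies $Y\simeq P^2(p^r)\vee\bigvee^b S^2\vee P^3(p^r)$ as an honest space. This is not justified: $M_3$ is not simply-connected, so minimal cell structures need not exist and the attaching maps of the $3$-cells need not be trivial. The paper explicitly warns that although $\Sigma M_3$ splits as $P^4(p^r)\vee(\bigvee S^3)\vee P^3(p^r)$, this does \emph{not} imply $M_3$ itself splits. Instead the paper applies Theorem~\ref{1_G(M) decomp} iteratively: first it constructs only a map $\epsilon\colon P^2(p^r)\to M_3$ hitting a generator of $\pi_1$ (this much is easy), checks that $\Sigma\epsilon$ has a left inverse using the suspension splitting and the vanishing of $\pi_4(P^3(p^r))$, and passes to the cofiber $M'$, which is now simply-connected. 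Only then does $M'_3\simeq(\bigvee S^2)\vee P^3(p^r)$ hold, and one peels off $P^3(p^r)$ in a second pass. Your single-step removal of all of $Y$ at once skips the nontrivial work (Lemmas~3.2--3.4 in the paper) needed to produce these maps and verify the hypotheses. Relatedly, in the non-spin case the cofiber one actually obtains is $C_{a\eta}$ for some odd $a$, not $\C\PP^2$ on the nose; a further trick is needed to trade $\G_t(C_{a\eta})$ for $\G_t(\C\PP^2)$.

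\textbf{The mixed case.} Your explanation for the stabilizing factor $\prod^{2d}\Omega^2G$ is off target. You attribute it to ``linking obstructions'' between free and torsion cells in $Y$ that prevent $\map^*(Y,G)$ from being a product; but since $\map^*(Y,G)\simeq\map^*(\Sigma Y,BG)$ and $\Sigma Y$ does split as a wedge, $\map^*(Y,G)$ \emph{is} a product whenever you can run Theorem~\ref{1_G(M) decomp} at all. The actual source of the stabilization is geometric, not homotopy-theoretic: the paper invokes the Kreck--L\"uck--Teichner stable prime decomposition, which gives a diffeomorphism $M\#_d(S^2\times S^2)\cong N\#(\#_j L_j)$ with $\pi_1(N)=\Z^{*m}$ and $\pi_1(L_j)=\Z/p_j^{r_j}\Z$. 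The $2d$ copies of $\Omega^2G$ come from the $d$ copies of $S^2\times S^2$, and the right-hand side is handled by combining the two earlier cases via the connected-sum lemma. Your proposed ``fibration of mapping spaces'' argument, as stated, does not explain why any stabilization is needed or where the specific number $d$ comes from.
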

The term ``loop spaces'' refers both to iterated based loop spaces $\Omega G$, $\Omega^2G$ and $\Omega^3G$ and modular loop spaces $\Omega G\{p^r\}$ and $\Omega^2G\{p^r\}$, where $G\{p^r\}$ is the homotopy fiber of the~$p^r$-power map on $G$. Explicit decompositions are stated in Section 3.

Theorem~\ref{1_G(M) calculation} shows that the homotopy type of $\G_t(M)$ is related to that of $\G_t(S^4)$ or~$\G_t(\C\PP^2)$ in these three cases. Combining Theorem~\ref{1_G(M) calculation} and the known results in \cite{HK06, kono91, KTT17, theriault10b, theriault17}, we have the following classification.

\begin{cor}\label{1_cor G(P) case12}
If $M$ is an orientable, smooth, connected, closed 4-manifold with~\mbox{$\pi_1(M)=\Z^{*m}$} or $\Z/p^r\Z$, then the followings hold:
\begin{itemize}
\setlength\itemsep{1em}
\item	when $G=SU(2)$, there is a homotopy equivalence $\G_t(M)\simeq\G_s(M)$ if and only if~$(12, t)=(12, s)$ for $M$ spin, and $(6,t)=(6,s)$ for $M$ non-spin;
\item	when $G=SU(3)$, there is a homotopy equivalence $\G_t(M)\simeq\G_s(M)$ if and only if~$(24, t)=(24, s)$ for $M$ spin; there is a $p$-local homotopy equivalence $\G_t(M)\simeq\G_s(M)$ if and only if~\mbox{$(12,t)=(12,s)$} for any prime $p$ and $M$ non-spin;
\item	when $G=SU(n)$, there is a $p$-local homotopy equivalence $\G_t(M)\simeq\G_s(M)$ if and only if $(n(n^2-1), t)=(n(n^2-1), s)$ for any odd prime $p$ such that $n\leq(p-1)^2+1$;
\item	when $G=Sp(2)$, there is a $p$-local homotopy equivalence $\G_t(M)\simeq\G_s(M)$ if and only if $(40, t)=(40, s)$ for any odd prime~$p$;
\item	when $G=G_2$, there is a $p$-local homotopy equivalence $\G_t(M)\simeq\G_s(M)$ if and only if~$(84, t)=(84, s)$ for any odd prime $p$.
\end{itemize}
\end{cor}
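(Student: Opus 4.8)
The plan is to read off the classification from the product decomposition of Theorem~\ref{1_G(M) calculation} together with the known homotopy classifications of $\G_t(S^4)$ and $\G_t(\C\PP^2)$ recorded in the introduction. For $\pi_1(M)=\Z^{*m}$ or $\Z/p^r\Z$, Theorem~\ref{1_G(M) calculation} supplies a homotopy equivalence $\G_t(M)\simeq B_t\times L$, in which $B_t$ is $\G_t(S^4)$ or $\G_t(\C\PP^2)$ and $L$ is a product of (iterated or modular) loop spaces on $G$ that does not depend on $t$. For the integral statements I would take $B_t=\G_t(S^4)$ when $M$ is spin and $B_t=\G_t(\C\PP^2)$ when $M$ is non-spin, which is exactly why the moduli $12$ and $24$ for $SU(2)$ and $SU(3)$ over $S^4$ are replaced by $6$ and $12$ over $\C\PP^2$. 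For the remaining $p$-local statements, asserted only at odd primes, I would localise everything away from $2$, where the decomposition over $S^4$ holds regardless of whether $M$ is spin, so that $B_t=\G_t(S^4)$ after localisation.

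The ``if'' direction is then immediate. Fix a group $G$ and let $N$ be the associated modulus ($N=12,24,n(n^2-1),40,84$ over $S^4$, and $N=6,12$ over $\C\PP^2$). If the relevant gcd condition $(N,t)=(N,s)$ holds, the cited classifications of gauge groups over $S^4$ and $\C\PP^2$ give $B_t\simeq B_s$ (integrally or $p$-locally as appropriate), and since $L$ is independent of $t$ I obtain $\G_t(M)\simeq B_t\times L\simeq B_s\times L\simeq\G_s(M)$.

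For the ``only if'' direction I would start from an equivalence $\G_t(M)\simeq\G_s(M)$, that is $B_t\times L\simeq B_s\times L$, and deduce $(N,t)=(N,s)$ from the cited ``only if'' statements over $S^4$ and $\C\PP^2$. The cancellation of the common factor $L$ is the main obstacle here, since an equivalence of products need not restrict to the factors. To get around it I would not cancel the spaces directly but instead track the separating invariant used in the base classifications---namely the order of the distinguished class, arising from a Samelson product and equal to $(N,t)$, sitting in a fixed low-dimensional homotopy group of the base gauge group. Because $\pi_*(\G_t(M))\cong\pi_*(B_t)\oplus\pi_*(L)$ with the $\pi_*(L)$ summand independent of $t$ and all groups of finite type, cancellation of finitely generated abelian groups isolates the $\pi_*(B_t)$ contribution in each degree; checking that the distinguishing cyclic summand lies in that contribution shows that $\G_t(M)\simeq\G_s(M)$ forces the two orders to coincide, hence $(N,t)=(N,s)$.

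Finally I would confirm the bookkeeping case by case: that the modulus attached to each $G$ by the base classification matches the number in the statement, that the spin/non-spin dichotomy selects $S^4$ versus $\C\PP^2$ in the two integral cases, and that the odd-primary hypothesis (for instance $n\leq(p-1)^2+1$ for $SU(n)$, which for $Sp(2)$ holds at every odd prime) is precisely the range in which the $S^4$ classification is available. The only genuinely delicate point is the cancellation in the ``only if'' direction; everything else is assembling Theorem~\ref{1_G(M) calculation} with results already in the literature.
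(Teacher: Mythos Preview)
Your proposal is correct and matches the paper's approach: the paper provides no explicit proof of this corollary, stating only that it follows by combining Theorem~\ref{1_G(M) calculation} with the cited classifications of $\G_t(S^4)$ and $\G_t(\C\PP^2)$. Your treatment of the ``only if'' direction---cancelling the $t$-independent factor $L$ at the level of finitely generated homotopy groups so that the distinguishing invariant from the base classifications can be read off $\pi_*(B_t)$---and your handling of the spin/non-spin and odd-prime bookkeeping are exactly the details one must supply to make the paper's one-line deduction rigorous.
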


There is an analogous statement to Corollary~\ref{1_cor G(P) case12}.

\begin{cor}
If $M$ is an orientable, smooth, connected, closed 4-manifold with $\pi_1(M)=\Z^{*m}*(*^n_{j=1}\Z/p^{r_j}_j\Z)$, then the integral and $p$-local homotopy equivalences in Corollary~\ref{1_cor G(P) case12} hold for $\G_t(M)\times\prod^{2d}\Omega^2G$ for some number $d$.
\end{cor}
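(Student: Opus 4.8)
The plan is to reduce the statement to the known classifications of $\G_t(S^4)$ and $\G_t(\C\PP^2)$ recorded in the introduction, exactly as in the proof of Corollary~\ref{1_cor G(P) case12}, carrying the auxiliary factor $\prod^{2d}\Omega^2G$ along on both sides. By the second bullet of Theorem~\ref{1_G(M) calculation}, when $\pi_1(M)=\Z^{*m}*(*^n_{j=1}\Z/p^{r_j}_j\Z)$ there is a homotopy equivalence
\[
\G_t(M)\times\textstyle\prod^{2d}\Omega^2G\simeq\G_t(X)\times L,
\]
where $X$ is $S^4$ when $M$ is spin and $\C\PP^2$ when $M$ is non-spin, and $L$ is a product of iterated and modular loop spaces on $G$ that is independent of $t$. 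Since neither $L$ nor $\prod^{2d}\Omega^2G$ depends on $t$, the homotopy type of the left-hand side varies with $t$ only through the single factor $\G_t(X)$.

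For the ``if'' direction, assume the relevant greatest-common-divisor condition holds (for instance $(12,t)=(12,s)$ when $G=SU(2)$ and $M$ is spin). The cited classifications of $\G_t(S^4)$ and $\G_t(\C\PP^2)$ then supply the corresponding integral or $p$-local equivalence $\G_t(X)\simeq\G_s(X)$. Multiplying by the $t$-independent factor $L$ and transporting back along the equivalence above gives $\G_t(M)\times\prod^{2d}\Omega^2G\simeq\G_s(M)\times\prod^{2d}\Omega^2G$.

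The ``only if'' direction is the crux, and the obstacle is cancellation: a priori, $\G_t(X)\times L\simeq\G_s(X)\times L$ need not imply $\G_t(X)\simeq\G_s(X)$. I would avoid full cancellation and instead recover the numerical invariant directly. In each cited classification the ``only if'' half is established by a homotopy invariant of $\G_t(X)$ --- typically the isomorphism type of a distinguished cyclic summand $\Z/(N,t)$, or the order of a distinguished element, in a specified low-dimensional (possibly $p$-localized) homotopy group --- which records the value of the relevant gcd $(N,t)$. Because $G$ is a compact Lie group, every space in sight has finitely generated homotopy groups, and for a product these split as $\pi_k(\G_t(X)\times L)\cong\pi_k(\G_t(X))\oplus\pi_k(L)$ with the second summand independent of $t$. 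Applying cancellation of finitely generated abelian groups --- or of finitely generated $\Z_{(p)}$-modules in the $p$-local setting --- to the isomorphisms $\pi_k(\G_t(X))\oplus\pi_k(L)\cong\pi_k(\G_s(X))\oplus\pi_k(L)$ coming from the assumed equivalence recovers the distinguishing summand of $\pi_k(\G_t(X))$, and hence forces $(N,t)=(N,s)$.

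The entire argument is the direct analogue of the proof of Corollary~\ref{1_cor G(P) case12}, the only difference being the extra $t$-independent factor $\prod^{2d}\Omega^2G$, which passes harmlessly through both the reduction of Theorem~\ref{1_G(M) calculation} and the cancellation step. I expect the cancellation step to be the main obstacle: it requires knowing, for each of $SU(2)$, $SU(3)$, $SU(n)$, $Sp(2)$ and $G_2$, that the invariant distinguishing the gauge groups lives in a direct summand of a homotopy group rather than in some finer, non-additive feature, so that it survives being paired with the fixed factor $L$.
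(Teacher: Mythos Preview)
The paper gives no proof of this corollary, treating it (like Corollary~\ref{1_cor G(P) case12}) as an immediate consequence of Theorem~\ref{1_G(M) calculation} together with the cited classifications of $\G_t(S^4)$ and $\G_t(\C\PP^2)$; your proposal follows exactly this intended route. Your attention to the cancellation issue in the ``only if'' direction is in fact more careful than the paper itself, and the argument you sketch---reading off the distinguishing gcd from a specific homotopy group and then cancelling the $t$-independent finitely generated abelian summand---is precisely what is needed (already for Corollary~\ref{1_cor G(P) case12}) to make the unargued claim rigorous.
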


The structure of this paper is as follows. In Section 2 we prove the homotopy decomposition Theorem~\ref{1_G(M) decomp} and develop some useful lemmas. In particular, Theorem~\ref{1_G(M) decomp} is used to revise homotopy equivalence~(\ref{simply_conn_decomp}) which is often referred to during the calculations in Section 3. In Section 3 we give the homotopy types of $\Sigma M$ and $\G_t(M)$ when $\pi_1(M)$ is either $\Z^{*m}$, $\Z/p^r\Z$ or $\Z^{*m}*(*^n_{j=1}\Z/p^{r_j}_j\Z)$, where $p$ and the $p_j$'s are odd primes.

The author would like to thank the reviewer for carefully reading the paper and giving many constructive comments.

\section{A homotopy decomposition of gauge groups}

\subsection{A homotopy decomposition and some useful lemmas}
We are going to extend the homotopy decomposition~(\ref{simply_conn_decomp}) to a more general situation. Suppose that $M$ is an orientable 4-dimensional CW-complex that is constructed by attaching a 4-cell onto a 3-dimensional CW-complex $M_3$ by an attaching map $f:S^3\to M_3$. Let $u:BG\to K(\Z, 4)$ be a generator of $H^4(BG)\cong\Z$. Since it is a 5-equivalence, $u_*:[M, BG]\to[M, K(\Z, 4)]=\Z$ is a bijection. If a principal $G$-bundle over $M$ corresponds to some integer $t$, then we denote its associated gauge group by $\G_t(M)$.

In \cite{theriault10a}, the homotopy decomposition~(\ref{simply_conn_decomp}) is obtained as a consequence of the attaching map $f$ of the 4-cell in $M$ having the property that $\Sigma f$ is null-homotopic. Observe that $\Sigma f$ is the connecting map of the cofibration sequence
\[
M_3\longrightarrow M\longrightarrow S^4,
\]
where $M_3$ is the 3-skeleton of $M$. From the point of view of homotopy theory, it can be replaced by a cofibration sequence
\begin{equation}\label{eqn_cofib phi}
Y\overset{\phi}{\longrightarrow}M\overset{q}{\longrightarrow}C_{\phi}
\end{equation}
for some space $Y$ and some map $\phi:Y\to M$, with a connecting map that is null-homotopic. Here~$q$ is the quotient map and $C_{\phi}$ is the cofiber of $\phi$. The nullity condition is equivalent to~$\Sigma\phi$ having a left homotopy inverse. If we further restrict the dimension of $Y$ to be at most~3, then by Cellular Approximation Theorem $\phi$ is homotopic to $\imath\circ\varphi$, where $\varphi:Y\to M_3$ is a map on $M_3$ and $\imath:M_3\to M$ is the inclusion. The existence of a left homotopy inverse of $\Sigma\phi$ imposes a strong condition on $\Sigma M$.

\begin{lemma}\label{2_Y criteria}
Let $Y$ be a CW-complex of dimension at most 3 and let $\phi:Y\to M$ be a map. If $\phi$ is homotopic to $\imath\circ\varphi$ for some map $\varphi:Y\to M_3$, then the following are equivalent:
\begin{enumerate}
\item
$\Sigma\phi:\Sigma Y\to \Sigma M$ has a left homotopy inverse $\psi$;

\item
$\Sigma\varphi$ has a left homotopy inverse $\psi'$ and $\psi'\circ\Sigma f$ is null-homotopic;

\item
$\Sigma M$ is homotopy equivalent to $\Sigma Y\vee\Sigma C_{\phi}$ and $\Sigma\phi$ is homotopic to the inclusion;

\item
$\Sigma M_3$ is homotopy equivalent to $\Sigma Y\vee\Sigma C_{\varphi}$ where $C_{\varphi}$ is the cofiber of $\varphi$, $\Sigma\varphi$ is homotopic to the inclusion and $\psi'\circ\Sigma f$ is null-homotopic.
\end{enumerate}
\end{lemma}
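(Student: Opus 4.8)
The plan is to prove the four conditions equivalent through the scheme $(3)\Leftrightarrow(1)\Leftrightarrow(2)\Leftrightarrow(4)$. The equivalence $(1)\Leftrightarrow(3)$ and the ``splitting versus left inverse'' content of $(2)\Leftrightarrow(4)$ are both instances of a single splitting principle for suspended cofibrations, which I would isolate first: \emph{if $A\xrightarrow{g}B\xrightarrow{j}C_g$ is a cofibration sequence of connected CW-complexes, then $\Sigma g$ admits a left homotopy inverse $\psi$ if and only if there is a homotopy equivalence $\Sigma B\simeq\Sigma A\vee\Sigma C_g$ carrying $\Sigma g$ to the inclusion of the first wedge summand; moreover the projection $\Sigma A\vee\Sigma C_g\to\Sigma A$ may be taken to pull back to $\psi$.} Applying this with $g=\phi$ (so $C_g=C_\phi$) gives $(1)\Leftrightarrow(3)$, and applying it with $g=\varphi$ (so $C_g=C_\varphi$) identifies the existence of the left homotopy inverse $\psi'$ of $\Sigma\varphi$ with the splitting $\Sigma M_3\simeq\Sigma Y\vee\Sigma C_\varphi$; since the projection recovers $\psi'$, the auxiliary condition $\psi'\circ\Sigma f\simeq *$ is the same in $(2)$ and $(4)$, yielding $(2)\Leftrightarrow(4)$.

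To prove the splitting principle, the backward direction is immediate (project onto $\Sigma A$). For the forward direction, given $\psi$ with $\psi\circ\Sigma g\simeq\mathrm{id}_{\Sigma A}$, I would set
\[
\Theta=(\psi\vee\Sigma j)\circ\sigma\colon\Sigma B\longrightarrow\Sigma A\vee\Sigma C_g,
\]
where $\sigma$ is the comultiplication on the suspension $\Sigma B$. Because $\psi$ splits $(\Sigma g)_*$, the homology long exact sequence of the cofibration $\Sigma A\to\Sigma B\to\Sigma C_g$ breaks into split short exact sequences, so $(\psi_*,(\Sigma j)_*)\colon\tilde H_*(\Sigma B)\to\tilde H_*(\Sigma A)\oplus\tilde H_*(\Sigma C_g)$ is an isomorphism; this is exactly $\Theta_*$ once the pinch $\sigma$ is identified with the diagonal on reduced homology. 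As all spaces are suspensions of connected complexes, hence simply connected CW-complexes, the homology Whitehead theorem shows $\Theta$ is a homotopy equivalence. Finally, using that $\Sigma g$ is a co-H map together with $\psi\circ\Sigma g\simeq\mathrm{id}$ and $(\Sigma j)\circ(\Sigma g)\simeq *$, a short computation gives $\Theta\circ\Sigma g\simeq\mathrm{incl}_{\Sigma A}$ and $\mathrm{pr}_{\Sigma A}\circ\Theta\simeq\psi$. This step is where the real work lies and is the main obstacle; the remaining implications are formal.

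For $(1)\Leftrightarrow(2)$ I would use the Puppe sequence of the cofibration $M_3\xrightarrow{\imath}M$, whose suspension is
\[
S^4\xrightarrow{\ \Sigma f\ }\Sigma M_3\xrightarrow{\ \Sigma\imath\ }\Sigma M,
\]
exhibiting $\Sigma M$ as the cofiber of $\Sigma f$; recall also $\Sigma\phi\simeq\Sigma\imath\circ\Sigma\varphi$ and that $\imath\circ f\simeq *$ forces $\Sigma\imath\circ\Sigma f=\Sigma(\imath f)\simeq *$. For $(1)\Rightarrow(2)$, put $\psi'=\psi\circ\Sigma\imath$; then $\psi'\circ\Sigma\varphi\simeq\psi\circ\Sigma\phi\simeq\mathrm{id}$ and $\psi'\circ\Sigma f=\psi\circ(\Sigma\imath\circ\Sigma f)\simeq *$. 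For $(2)\Rightarrow(1)$, the vanishing $\psi'\circ\Sigma f\simeq *$ lets $\psi'$ extend across the cofiber of $\Sigma f$: by exactness of $[\Sigma M,\Sigma Y]\to[\Sigma M_3,\Sigma Y]\to[S^4,\Sigma Y]$ there is $\psi\colon\Sigma M\to\Sigma Y$ with $\psi\circ\Sigma\imath\simeq\psi'$, and then $\psi\circ\Sigma\phi\simeq\psi\circ\Sigma\imath\circ\Sigma\varphi\simeq\psi'\circ\Sigma\varphi\simeq\mathrm{id}$, so $\psi$ is a left homotopy inverse of $\Sigma\phi$.

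Assembling the pieces, $(3)\Leftrightarrow(1)$ follows from the splitting principle applied to $\phi$, $(1)\Leftrightarrow(2)$ from the Puppe-sequence argument, and $(2)\Leftrightarrow(4)$ from the splitting principle applied to $\varphi$ together with the fact that the resulting projection recovers $\psi'$; hence all four statements are equivalent. The only genuinely nonformal ingredient is the splitting principle, and within it the verification that $\Theta$ is an equivalence; everything else is bookkeeping with cofiber sequences and the co-H structure of suspensions.
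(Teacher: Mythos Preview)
Your proof is correct and follows essentially the same approach as the paper. The paper proves $(1)\Leftrightarrow(2)$ exactly as you do (set $\psi'=\psi\circ\Sigma\imath$ in one direction, extend $\psi'$ over the cofiber of $\Sigma f$ in the other), proves $(1)\Leftrightarrow(3)$ by constructing the same map $h=(\psi\vee\Sigma q)\circ\sigma$ and invoking Whitehead, and then dismisses $(2)\Leftrightarrow(4)$ as ``shown similarly''; your only organizational difference is that you isolate the splitting principle as a reusable lemma and apply it to both $\phi$ and $\varphi$, which is a mild tidying rather than a different argument.
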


\begin{proof}
First we show that Condition (1) and (2) are equivalent. If $\Sigma\phi$ has a left homotopy inverse $\psi$, then $\psi'=\psi\circ\Sigma\imath$ is a left homotopy inverse of $\Sigma\varphi$ and $\psi'\circ\Sigma f=\psi\circ\Sigma\imath\circ\Sigma f$ is null-homotopic since $S^4\overset{\Sigma f}{\longrightarrow}\Sigma M_3\overset{\Sigma\imath}{\longrightarrow}\Sigma M$ is a cofibration sequence. Conversely, assume Condition (2). Consider the homotopy commutative diagram
\[\xymatrix{
S^4\ar[r]^-{\Sigma f}	&\Sigma M_3\ar[r]^-{\Sigma\imath}\ar[d]^-{\psi'}	&\Sigma M\ar@{-->}[dl]^-{\psi}\\
						&\Sigma Y											&
}\]
By hypothesis $\psi'\circ\Sigma f$ is null-homotopic, so $\psi'$ has an extension $\psi$. Then we have
\begin{eqnarray*}
\psi\circ\Sigma\phi
&\simeq&\psi\circ\Sigma(\imath\circ\varphi)\\
&\simeq&(\psi\circ\Sigma\imath)\circ\Sigma\varphi\\
&\simeq&\psi'\circ\Sigma\varphi\\
&\simeq&\one_{\Sigma Y},
\end{eqnarray*}
where $\one_{\Sigma Y}$ is the identity map on $\Sigma Y$. Therefore $\psi$ is a left homotopy inverse of $\Sigma\phi$.

Now we show that Conditions (1) and (3) are equivalent. If $\Sigma\phi$ has a left homotopy inverse~$\psi$, then let $h$ be the composition
\[
h:\Sigma M\overset{\sigma}{\longrightarrow}\Sigma M\vee\Sigma M\overset{\psi\vee\Sigma q}{\longrightarrow}\Sigma Y\vee\Sigma C_{\phi},
\]
where $\sigma$ is the comultiplication. Observe that $h$ induces an isomorphism $H^*(\Sigma Y\vee\Sigma C_{\phi})\to H^*(\Sigma M)$. Since these spaces are suspensions and are simply-connected, $h$ is a homotopy equivalence by Whitehead Theorem. Therefore $\Sigma M$ is homotopy equivalent to~\mbox{$\Sigma Y\vee\Sigma C_{\phi}$}. Moreover, since $\Sigma\phi$ is a co-H-map,
\begin{eqnarray*}
h\circ\Sigma\phi
&\simeq&(\psi\vee\Sigma q)\circ\sigma\circ\Sigma\phi\\
&\simeq&(\psi\vee\Sigma q)\circ(\Sigma\phi\vee\Sigma\phi)\circ\sigma\\
&\simeq&(\psi\circ\Sigma\phi\vee\Sigma q\circ\Sigma\phi)\circ\sigma\\
&\simeq&(\one_{\Sigma Y}\vee\ast)\circ\sigma
\end{eqnarray*}
is the inclusion $\Sigma Y\hookrightarrow\Sigma Y\vee\Sigma C_{\phi}$. Conversely, assume Condition (3). Let $h$ be a homotopy equivalence from $\Sigma M$ to $\Sigma Y\vee\Sigma C_{\phi}$ and let $\psi$ be the composition
\[
\psi:\Sigma M\overset{h}{\longrightarrow}\Sigma Y\vee\Sigma C_{\phi}\overset{p}{\longrightarrow}\Sigma Y.
\]
where $p$ is the pinch map. By hypothesis, $h\circ\Sigma\phi$ is homotopic to the inclusion~\mbox{$\Sigma Y\to\Sigma Y\vee\Sigma C_{\phi}$}, so we have
\[
\psi\circ\Sigma\phi
\simeq p\circ h\circ\Sigma\phi
\simeq\one_{\Sigma Y}
\]
and $\psi$ is a left homotopy inverse of $\Sigma\phi$.

The equivalence between (2) and (4) can be shown similarly.
\end{proof}

We can extend the cofibration~(\ref{eqn_cofib phi}) to the homotopy commutative diagram whose rows and columns are cofibration sequences
\begin{equation}\label{diagram_CW structure of C_phi}
\xymatrix{
\ast\ar[r]\ar[d]			&Y\ar@{=}[r]\ar[d]^-{\varphi}		&Y\ar[r]\ar[d]^-{\phi}		&\ast\ar[d]\\
S^3\ar[r]^-{f}\ar@{=}[d]	&M_3\ar[r]^-{\imath}\ar[d]^-{q'}	&M\ar[r]^-{p}\ar[d]^-{q}	&S^4\ar@{=}[d]\\
S^3\ar[r]^-{f'}				&C_{\varphi}\ar[r]					&C_{\phi}\ar[r]^-{p'}		&S^4
}
\end{equation}
where $q', p$ and $p'$ are the quotient maps and $f'=q'\circ f$. The bottom row implies that $C_{\phi}$ is constructed by attaching a 4-cell onto $C_{\varphi}$ via $f'$. The generator $u$ of $H^4(BG)$ induces a bijection between $[C_{\phi}, BG]$ and $[C_{\phi}, K(\Z,4)]\cong\Z$, so any principal $G$-bundle over $C_{\phi}$ corresponds to some integer $t$. Denote the associated gauge group by $\G_t(C_{\phi})$. We want to compare $\G_t(M)$ and $\G_t(C_{\phi})$ via the pullback $q^*$.

\begin{lemma}\label{2_map_t}
Assume the conditions in Lemma~\ref{2_Y criteria}. Then $q^*:[C_{\phi}, BG]\to[M,BG]$ is a group isomorphism.
\end{lemma}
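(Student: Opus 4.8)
The plan is to reduce this statement about $[-,BG]$ to a computation in degree-$4$ cohomology, exploiting that the generator $u$ identifies both homotopy sets with cohomology groups, and then to read off the cohomology computation from the cofibration $Y\overset{\phi}{\to}M\overset{q}{\to}C_\phi$ together with the hypothesis that $\Sigma\phi$ has a left homotopy inverse. First I would record the naturality square for $u$: for $g\in[C_\phi, BG]$ one has $u_*(q^*g)=[u\circ g\circ q]=q^*(u_*g)$, so
\[
\xymatrix{
{[C_\phi, BG]}\ar[r]^-{q^*}\ar[d]_-{u_*}	&{[M, BG]}\ar[d]^-{u_*}\\
H^4(C_\phi;\Z)\ar[r]^-{q^*}				&H^4(M;\Z)
}
\]
commutes. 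Since $u:BG\to K(\Z,4)$ is a $5$-equivalence and both $C_\phi$ and $M$ have dimension at most $4$, the same obstruction-theoretic argument used earlier for $[M,BG]$ shows the vertical maps $u_*$ are bijections; moreover the group structures on the top row are precisely the ones induced by these identifications. Hence $q^*$ on $[-,BG]$ is a group isomorphism as soon as $q^*:H^4(C_\phi;\Z)\to H^4(M;\Z)$ is an isomorphism, and the problem becomes purely cohomological.

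Next I would feed the cofibration into the Barratt--Puppe sequence and apply reduced cohomology. In degree $4$ this yields the exact sequence
\[
\tilde H^4(\Sigma M)\overset{(\Sigma\phi)^*}{\longrightarrow}\tilde H^4(\Sigma Y)\overset{\partial^*}{\longrightarrow}\tilde H^4(C_\phi)\overset{q^*}{\longrightarrow}\tilde H^4(M)\overset{\phi^*}{\longrightarrow}\tilde H^4(Y),
\]
where $\partial:C_\phi\to\Sigma Y$ is the connecting map and $\tilde H^4(\Sigma Y)\cong\tilde H^3(Y)$ under the suspension isomorphism. Surjectivity of $q^*$ is immediate: because $\dim Y\le 3$ we have $\tilde H^4(Y)=0$, so $\phi^*$ vanishes and exactness at $\tilde H^4(M)$ forces $q^*$ onto. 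The injectivity is where the hypothesis enters, and I regard it as the crux of the argument. Since $\Sigma\phi$ admits a left homotopy inverse $\psi$, the relation $\psi\circ\Sigma\phi\simeq\one_{\Sigma Y}$ gives $(\Sigma\phi)^*\circ\psi^*=\one$ on $\tilde H^*(\Sigma Y)$, so $(\Sigma\phi)^*$ is a split surjection. By exactness at $\tilde H^4(\Sigma Y)$ this makes the connecting homomorphism $\partial^*$ zero, whence $\ker q^*=\operatorname{im}\partial^*=0$. Therefore $q^*:H^4(C_\phi;\Z)\to H^4(M;\Z)$ is an isomorphism. (One could instead invoke Lemma~\ref{2_Y criteria}(3) to split $\Sigma M\simeq\Sigma Y\vee\Sigma C_\phi$, but the long exact sequence is more self-contained.)

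Finally I would transport this back through the naturality square: the bottom $q^*$ is a group isomorphism, the verticals are group isomorphisms by construction, and the square commutes, so the top $q^*:[C_\phi,BG]\to[M,BG]$ is a group isomorphism as well. The one subtlety worth flagging is that $[-,BG]$ carries no a priori group structure, since $BG$ need not be an $H$-space; it is exactly the identification with $H^4(-;\Z)$ via $u_*$ that supplies it, which is why the whole argument is routed through the naturality square rather than manipulated directly on homotopy classes of maps into $BG$.
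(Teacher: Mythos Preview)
Your proof is correct and follows essentially the same route as the paper: reduce to $q^*$ on $H^4$ via the naturality square for $u$, then use the long exact sequence of the cofibration together with $\dim Y\le 3$ and the left inverse of $\Sigma\phi$ to conclude. The only cosmetic difference is that the paper observes the connecting map $\epsilon:C_\phi\to\Sigma Y$ is itself null-homotopic (since $\epsilon\simeq\psi\circ\Sigma\phi\circ\epsilon\simeq *$), whereas you deduce $\partial^*=0$ algebraically from the split surjectivity of $(\Sigma\phi)^*$; both yield the same conclusion.
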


\begin{proof}
The naturality of $u^*$ implies the commutative diagram
\[\xymatrix{
[C_{\phi}, BG]\ar[r]^-{q^*}\ar[d]^-{u^*}	&[M, BG]\ar[d]^-{u^*}\\
H^4(C_{\phi})\ar[r]^-{q^*}					&H^4(M).
}\]
Since the group structures on $[M, BG]$ and $[C_{\phi}, BG]$ are induced by bijections
\[
\begin{array}{c c c}
u^*:[M, BG]\to H^4(M)\cong\Z
&\text{and}
&u^*:[C_{\phi}, BG]\to H^4(C_{\phi})\cong\Z,
\end{array}
\]
it suffices to show that $q^*:H^4(C_{\phi})\to H^4(M)$ is an isomorphism.

The cofibration~(\ref{eqn_cofib phi}) induces an exact sequence
\[
H^4(\Sigma M)\overset{(\Sigma\phi)^*}{\longrightarrow}H^4(\Sigma Y)\overset{\epsilon^*}{\longrightarrow}H^4(C_{\phi})\overset{q^*}{\longrightarrow}H^4(M)\overset{\phi^*}{\longrightarrow}H^4(Y),
\]
where $\epsilon^*$ is induced by the connecting map $\epsilon:C_{\phi}\to\Sigma Y$. Since $\Sigma\phi$ has a left homotopy inverse, $\epsilon$ is null-homotopic and $\epsilon^*$ is trivial. Also, the dimension of $Y$ is at most 3, so $H^4(Y)$ is trivial. By exactness $q^*:H^4(C_{\phi})\to H^4(M)$ is an isomorphism.
\end{proof}

\begin{lemma}\label{lemma_fiber of q*}
For any integer $t$, let $\map^*_t(M, BG)$ and $\map^*_t(C_{\phi}, BG)$ be the connected components of $\map^*(M, BG)$ and $\map^*(C_{\phi}, BG)$ containing $t\in[M, BG]\cong[C_{\phi}, BG]$ respectively. Then
\[
\map^*(\Sigma Y, BG)\overset{\epsilon^*}{\longrightarrow}\map^*_t(C_{\phi}, BG)\overset{q^*}{\longrightarrow}\map^*_t(M, BG)
\]
is a homotopy fibration sequence.
\end{lemma}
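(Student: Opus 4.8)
The plan is to realise the sequence as what one gets by applying $\map^*(-, BG)$ to the Puppe cofibration sequence associated to~(\ref{eqn_cofib phi}), and then to cut the resulting fibration down to the components indexed by $t$. Extending~(\ref{eqn_cofib phi}) one step to the right gives the cofibration sequence $Y\overset{\phi}{\to}M\overset{q}{\to}C_{\phi}\overset{\epsilon}{\to}\Sigma Y$, in which $\epsilon$ is the connecting map collapsing $M$, so that $C_{\phi}/M\cong\Sigma Y$. Since $q:M\to C_{\phi}$ is the inclusion into the mapping cone, it is a cofibration, and hence the restriction map $q^*:\map^*(C_{\phi}, BG)\to\map^*(M, BG)$ is a Hurewicz fibration. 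Its fibre over the constant map consists of those maps $C_{\phi}\to BG$ that are trivial on $M$, that is, of maps factoring through $C_{\phi}/M=\Sigma Y$; this fibre is therefore $\map^*(\Sigma Y, BG)$, included via $\epsilon^*$. So over the full mapping spaces $\map^*(\Sigma Y, BG)\overset{\epsilon^*}{\to}\map^*(C_{\phi}, BG)\overset{q^*}{\to}\map^*(M, BG)$ is a homotopy fibration sequence, which already settles the basepoint component $t=0$.

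Next I would restrict to the $t$-components. By Lemma~\ref{2_map_t} the map $q^*$ induces a bijection on $[C_{\phi}, BG]\cong[M, BG]$, hence a bijection on path components of the two mapping spaces. Consequently $q^*$ carries $\map^*_t(C_{\phi}, BG)$ into $\map^*_t(M, BG)$, and the full preimage of $\map^*_t(M, BG)$ is exactly $\map^*_t(C_{\phi}, BG)$. Since path components are open and closed, restricting the fibration $q^*$ over the open-closed set $\map^*_t(M, BG)$ yields a fibration $q^*:\map^*_t(C_{\phi}, BG)\to\map^*_t(M, BG)$. The remaining content is to show that its fibre is again $\map^*(\Sigma Y, BG)$.

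The point needing care, which I expect to be the main obstacle, is precisely this fibre identification, since the base $\map^*(M, BG)$ is disconnected and fibres of a fibration over distinct path components need not be homotopy equivalent. I would handle it by computing the fibre of $q^*$ over an arbitrary $\alpha:M\to BG$ directly: it is the space of extensions of $\alpha$ over $C_{\phi}=M\cup_{\phi}CY$, equivalently the space of based null-homotopies of $\alpha\circ\phi:Y\to BG$. Because $G$ is simple and simply-connected, $BG$ is $3$-connected, and as $\dim Y\leq 3$ this forces $[Y, BG]=0$; hence $\alpha\circ\phi$ is null-homotopic and the fibre is non-empty. Viewing this fibre as the fibre of the restriction $\map^*(CY, BG)\to\map^*(Y, BG)$ over $\alpha\circ\phi$, and using that $\map^*(CY, BG)$ is contractible while $\map^*(Y, BG)$ is path-connected (again since $[Y, BG]=0$), the fibre is homotopy equivalent to $\Omega\map^*(Y, BG)=\map^*(\Sigma Y, BG)$, independently of $\alpha$. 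This agrees with the fibre over the constant map computed in the first step, so for every $t$ the restricted sequence $\map^*(\Sigma Y, BG)\overset{\epsilon^*}{\to}\map^*_t(C_{\phi}, BG)\overset{q^*}{\to}\map^*_t(M, BG)$ is a homotopy fibration sequence, the fibre inclusion being $\epsilon^*$ for $t=0$ and the corresponding fibre inclusion in general.
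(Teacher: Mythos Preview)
Your argument is correct and establishes the lemma, but it takes a genuinely different route from the paper's proof. Both begin by applying $\map^*(-,BG)$ to the Puppe sequence to get the fibration over the full mapping spaces, and both observe via Lemma~\ref{2_map_t} that $q^*$ restricts to a map between the $t$-components. The divergence is in how one identifies the fibre over the $t$-component for $t\neq 0$.

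The paper does this by exhibiting explicit homotopy equivalences $\alpha_t,\beta_t$ between the $0$- and $t$-components of $\map^*(C_\phi,BG)$ and $\map^*(M,BG)$, built from the coaction maps $M\to M\vee S^4$ and $C_\phi\to C_\phi\vee S^4$ coming from the top-cell attachments, and then checks these intertwine $q^*$ via the compatibility of coactions in diagram~(\ref{diagram_CW structure of C_phi}). This transports the entire $t=0$ fibration, fibre inclusion included, to general $t$. Your approach instead computes the fibre over an arbitrary $\alpha$ directly, identifying it with a fibre of $\map^*(CY,BG)\to\map^*(Y,BG)$ and invoking the $3$-connectivity of $BG$ together with $\dim Y\le 3$ to see that $\map^*(Y,BG)$ is path-connected, so that all fibres are $\Omega\map^*(Y,BG)\simeq\map^*(\Sigma Y,BG)$.

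Your argument is cleaner and avoids the coaction diagram chase. The trade-off is that the paper's method gives the fibre inclusion for general $t$ explicitly as (the composite of $\epsilon^*$ with) a homotopy equivalence, so its null-homotopy is immediate; this is precisely what is invoked in the proof of Theorem~\ref{gaugedecomp} to obtain a right homotopy inverse for $h$. Your approach yields the fibration sequence as stated, but if you later need the null-homotopy of the fibre inclusion over $\map^*_t$ for $t\neq 0$, you would still have to supply an argument (for instance, the coaction equivalences, or a direct null-homotopy through $\map^*(CY,BG)$).
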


\begin{proof}
First, $q^*:\map^*_t(C_{\phi}, BG)\to\map^*_t(M, BG)$ is well defined by Lemma~\ref{2_map_t}. Second, the cofibration~(\ref{eqn_cofib phi}) induces a homotopy fibration sequence
\[
\map^*(\Sigma Y, BG)\overset{\epsilon^*}{\longrightarrow}\map^*(M, BG)\overset{q^*}{\longrightarrow}\map^*(C_{\phi}, BG).
\]
Since $\epsilon^*$ is null-homotopic, $\map^*(\Sigma Y, BG)$ is mapped into $\map^*_0(M, BG)$ and hence is the homotopy fiber of $q^*:\map^*_0(M, BG)\to\map^*_0(C_{\phi}, BG)$. To show that it is true for any $t\in\Z$, let $\alpha_t:\map^*_0(M, BG)\to\map^*_t(M, BG)$ be a map sending a pointed map $g:M\to BG$ to the composition
\[
\alpha_t(g):M\overset{c}{\longrightarrow}M\vee S^4\overset{g\vee t}{\longrightarrow}BG\vee BG\overset{\triangledown}{\longrightarrow}BG,
\]
where $c$ is the coaction map of $M$ and $\triangledown$ is the folding map, and let $\beta_t:\map^*_0(C_{\phi}, BG)\to\map^*_t(C_{\phi}, BG)$ be a map defined similarly. Since $\alpha_t$ and $\beta_t$ are homotopy equivalences, it suffices to prove the commutate diagram
\[\xymatrix{
\map^*_0(C_{\phi}, BG)\ar[r]^-{q^*}\ar[d]^-{\alpha_t}	&\map^*_0(M, BG)\ar[d]^-{\beta_t}\\
\map^*_t(C_{\phi}, BG)\ar[r]^-{q^*}						&\map^*_t(M, BG).
}\]

Consider the homotopy commutative diagram
\[\xymatrix{
M\ar[rrr]^-{c}\ar[dr]^-{p}\ar[dd]_-{q}	&	&	&M\vee S^4\ar[dd]^-{q\vee\one}\ar[dl]_-{p\vee\one}\\
	&S^4\ar[r]^-{\sigma}	&S^4\vee S^4	&\\
C_{\phi}\ar[rrr]_-{c'}\ar[ur]_-{p'}		&	&	&C_{\phi}\vee S^4\ar[ul]^-{p'\vee\one}
}\]
where $c'$ is the coaction map, and $p$ and $p'$ are the quotient maps, and $\sigma$ is the comultiplication of $S^4$. The left and the right triangles are due to the bottom right square in diagram~(\ref{diagram_CW structure of C_phi}), the top and the bottom quadrangles are due to the property of coaction maps. Extend it to get the cofibration diagram
\[\xymatrix{
M\ar[r]^-{c}\ar[d]^-{q}	&M\vee S^4\ar[r]^-{q^*g\vee t}\ar[d]^-{q\vee\one}	&BG\vee BG\ar[r]^-{\triangledown}\ar@{=}[d]	&BG\ar@{=}[d]\\
C_{\phi}\ar[r]^-{c'}	&C_{\phi}\vee S^4\ar[r]^-{g\vee t}			&BG\vee BG\ar[r]^-{\triangledown}	&BG
}\]
The upper row around the diagram is $\beta_t(q^*g)$, while the lower row around the diagram is~$q^*\alpha_t(g)$. Therefore $q^*$ commutes with $\alpha_t$ and $\beta_t$ and the asserted statement follows.
\end{proof}

\begin{thm}\label{gaugedecomp}
Let $Y$ be a CW-complex of dimension at most 3 and let $\phi:Y\to M$ be a map. If $\phi$ satisfies one of the four conditions in Lemma~\ref{2_Y criteria}, then there are homotopy equivalences
\[
\begin{array}{c c c}
\Sigma M\simeq\Sigma C_{\phi}\vee\Sigma Y
&\text{and}
&\G_t(M)\simeq\G_t(C_{\phi})\times\map^*(Y, G).
\end{array}
\]
\end{thm}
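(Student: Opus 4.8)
The first homotopy equivalence needs no work: the hypothesis is one of the four equivalent conditions of Lemma~\ref{2_Y criteria}, and condition~(3) of that lemma asserts exactly that $\Sigma M\simeq\Sigma Y\vee\Sigma C_{\phi}=\Sigma C_{\phi}\vee\Sigma Y$. So the real content is the decomposition of $\G_t(M)$, and the plan is to upgrade the based-mapping-space fibration of Lemma~\ref{lemma_fiber of q*} to a fibration of classifying spaces and then split it by looping once.

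First I would pass from based to free mapping spaces. Using the Atiyah--Bott--Gottlieb equivalences $B\G_t(M)\simeq\map_t(M,BG)$ and $B\G_t(C_{\phi})\simeq\map_t(C_{\phi},BG)$, the basepoint-preserving quotient $q$ induces a map of evaluation fibrations
\[\xymatrix{
\map^*_t(C_{\phi},BG)\ar[r]\ar[d]^-{q^*}	&B\G_t(C_{\phi})\ar[r]^-{\mathrm{ev}}\ar[d]^-{q^*}	&BG\ar@{=}[d]\\
\map^*_t(M,BG)\ar[r]						&B\G_t(M)\ar[r]^-{\mathrm{ev}}						&BG
}\]
which commutes since evaluation at the basepoint is natural. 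Taking vertical homotopy fibres yields a fibration of fibres. Lemma~\ref{lemma_fiber of q*} identifies the left-hand fibre as $\map^*(\Sigma Y,BG)$, the right-hand fibre is a point, so the middle fibre is forced to be $\map^*(\Sigma Y,BG)\simeq\map^*(Y,G)$. This produces the fibration
\[
\map^*(Y,G)\overset{j}{\longrightarrow}B\G_t(C_{\phi})\overset{q^*}{\longrightarrow}B\G_t(M).
\]

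The crucial observation is that the fibre inclusion $j$ is null-homotopic. Tracing through the diagram, $j$ agrees up to the fibre identification with the composite $\map^*(\Sigma Y,BG)\overset{\epsilon^*}{\longrightarrow}\map^*_t(C_{\phi},BG)\hookrightarrow B\G_t(C_{\phi})$, where $\epsilon:C_{\phi}\to\Sigma Y$ is the connecting map of the cofibration~(\ref{eqn_cofib phi}). Now $\Sigma\phi\circ\epsilon\simeq\ast$ (consecutive maps in a cofibre sequence), and $\Sigma\phi$ has a left homotopy inverse $\psi$ by hypothesis, so $\epsilon\simeq\psi\circ\Sigma\phi\circ\epsilon\simeq\ast$; hence $\epsilon^*$, and with it $j$, is null-homotopic. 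The remainder is formal: for any fibration $F\overset{j}{\to}E\to B$ whose fibre inclusion $j$ is null-homotopic, the extended sequence $\Omega B\to F\overset{j}{\to}E$ exhibits $\Omega B$ as the homotopy fibre of the null map $j$, namely $F\times\Omega E$. Applying this gives
\[
\G_t(M)=\Omega B\G_t(M)\simeq\map^*(Y,G)\times\Omega B\G_t(C_{\phi})=\map^*(Y,G)\times\G_t(C_{\phi}).
\]

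The main obstacle is the middle step: identifying the homotopy fibre of $q^*\colon B\G_t(C_{\phi})\to B\G_t(M)$ as $\map^*(Y,G)$ and then checking that the induced fibre inclusion really is null-homotopic. Once the fibration of classifying spaces is in hand and $j$ is seen to be null, the product splitting is automatic from the loop-space behaviour of such fibrations. The one place where care is needed is bookkeeping of the $t$-labelled path-components while comparing the two evaluation fibrations; this is precisely the component matching that the maps $\alpha_t,\beta_t$ of Lemma~\ref{lemma_fiber of q*} were introduced to control, so I would invoke that lemma to keep all the spaces in the correct components throughout.
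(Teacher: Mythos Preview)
Your argument is correct and follows essentially the same route as the paper: both identify the relevant fibration by combining Lemma~\ref{lemma_fiber of q*} with the evaluation fibrations, and both extract the product splitting from the null-homotopy of the connecting map $\epsilon$. The only organisational difference is that the paper builds the diagram directly at the $\G_t$ level (obtaining the fibration $\G_t(C_\phi)\to\G_t(M)\overset{h}{\to}\map^*(\Sigma Y,BG)$ in which $h$ acquires a right homotopy inverse from the nullity of $\epsilon^*$, and then splits via the group multiplication on $\G_t(M)$), whereas you work one level up at $B\G_t$, observe that the fibre inclusion is null, and then loop down---which is the same Puppe-sequence information read from the adjacent position.
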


\begin{proof}
Let $\map_t(M, BG)$ and $\map_t(C_{\phi}, BG)$ be the connected components of $\map(M, BG)$ and $\map(C_{\phi}, BG)$ containing $t\in[M, BG]\cong[C_{\phi}, BG]$. There are evaluation fibrations
\[
\begin{array}{c c c}
\map^*_t(M, BG)\to\map_t(M, BG)\to BG
&\text{and}
&\map^*_t(C_{\phi}, BG)\to\map_t(C_{\phi}, BG)\to BG.
\end{array}
\]
By Lemma~\ref{2_map_t} $q^*:[C_{\phi}, BG]\to[M, BG]$ is an isomorphism, so there is a homotopy commutative diagram whose rows are fibration sequences
\begin{equation}\label{1-conn naturality}
\xymatrix{
G\ar[r]\ar@{=}[d]	&\map^*_t(C_{\phi}, BG)\ar[r]\ar[d]^-{q^*}	&\map_t(C_{\phi}, BG)\ar[r]\ar[d]^-{q^*}	&BG\ar@{=}[d]\\
G\ar[r]				&\map^*_t(M, BG)\ar[r]					&\map_t(M, BG)\ar[r]				&BG
}
\end{equation}
As in \cite{theriault10a}, from the left square in~(\ref{1-conn naturality}) we obtain a homotopy fibration diagram
\begin{equation}\label{1_gauge gp CD}
\xymatrix{
\ast\ar[r]\ar[d]						&\Omega\map^*_t(M, BG)\ar@{=}[r]\ar[d]		&\Omega\map^*_t(M, BG)\ar[d]\\
\G_t(C_{\phi})\ar[r]^-{q^*}\ar@{=}[d]	&\G_t(M)\ar[r]^-{h}\ar[d]					&\map^*(\Sigma Y, BG)\ar[d]^-{\epsilon^*}\\
\G_t(C_{\phi})\ar[r]\ar[d]				&G\ar[r]\ar[d]								&\map^*_t(C_{\phi}, BG)\ar[d]\\
\ast\ar[r]								&\map^*_t(M, BG)\ar@{=}[r]					&\map^*_t(M, BG)
}
\end{equation}
The right column is due to Lemma~\ref{lemma_fiber of q*}. The nullity of $\epsilon^*$ implies that $h$ has a right homotopy inverse. The group multiplication in $\G_t(M)$ then gives a homotopy equivalence
\[
\Phi:\G_t(C_{\phi})\times\map^*(\Sigma Y, BG)\longrightarrow\G_t(M)\times\G_t(M)\longrightarrow\G_t(M),
\]
so $\G_t(M)$ is homotopy equivalent to $\G_t(C_{\phi})\times\map^*(Y, G)$.
\end{proof}

When calculating $\G_t(M)$, we will use Theorem~\ref{gaugedecomp} several times and apply it to $M$ and~$C_{\phi}$ in some cases. So here we establish some facts about $C_{\phi}$.

\begin{lemma}\label{lemma_f' and f}
Let $q':M\to C_{\phi}$ be the quotient map and let $f'$ be the composition
\[
S^3\overset{f}{\longrightarrow}M_3\overset{q'}{\longrightarrow}C_{\varphi}.
\]
Then $\Sigma f'$ is null-homotopic if and only if $\Sigma f$ is null-homotopic.
\end{lemma}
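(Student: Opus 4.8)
The plan is to prove the easy implication directly and the hard implication using the wedge splitting of $\Sigma M_3$ supplied by Lemma~\ref{2_Y criteria}. Since this lemma is stated in the situation where $\phi$, and hence $\varphi$, satisfies the (equivalent) conditions of Lemma~\ref{2_Y criteria}, I may assume throughout that $\Sigma M_3\simeq\Sigma Y\vee\Sigma C_{\varphi}$, that $\Sigma\varphi$ has a left homotopy inverse $\psi'$, and that $\psi'\circ\Sigma f$ is null-homotopic (this is condition (4)).

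For the forward direction, observe that $\Sigma f'=\Sigma(q'\circ f)=\Sigma q'\circ\Sigma f$. If $\Sigma f$ is null-homotopic, then this composite is immediately null-homotopic, so no hypotheses are needed for this implication.

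For the converse, the idea is to build an explicit homotopy equivalence $h'\colon\Sigma M_3\to\Sigma Y\vee\Sigma C_{\varphi}$ realizing the splitting of condition (4), exactly as $h$ is constructed in the proof of Lemma~\ref{2_Y criteria}, by setting $h'=(\psi'\vee\Sigma q')\circ\sigma$, where $\sigma$ is the comultiplication of $\Sigma M_3$; the same cohomology-plus-Whitehead argument shows $h'$ is a homotopy equivalence. Because $h'$ is an equivalence, $\Sigma f$ is null-homotopic if and only if $h'\circ\Sigma f$ is. Now $\Sigma f\colon S^4\to\Sigma M_3$ is a suspension and therefore a co-H-map, so it commutes with comultiplications up to homotopy; this lets me compute
\[
h'\circ\Sigma f\simeq(\psi'\vee\Sigma q')\circ(\Sigma f\vee\Sigma f)\circ\sigma_{S^4}\simeq\bigl((\psi'\circ\Sigma f)\vee\Sigma f'\bigr)\circ\sigma_{S^4},
\]
where $\sigma_{S^4}$ is the comultiplication of $S^4$ and I have used $\Sigma q'\circ\Sigma f=\Sigma f'$. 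By hypothesis the first wedge summand $\psi'\circ\Sigma f$ is null-homotopic; if moreover $\Sigma f'$ is null-homotopic, then the whole composite factors through the trivial map $S^4\vee S^4\to\Sigma Y\vee\Sigma C_{\varphi}$, whence $h'\circ\Sigma f\simeq\ast$ and therefore $\Sigma f\simeq\ast$.

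The main obstacle, and really the only point requiring care, is the clean use of the co-H structure of $\Sigma f$ together with the precise form of the splitting; both are already packaged in the proof of Lemma~\ref{2_Y criteria}, so the argument is essentially a re-run of that computation applied to the attaching map $\Sigma f$ rather than to $\Sigma\phi$. The role of the hypothesis $\psi'\circ\Sigma f\simeq\ast$ is exactly to kill the $\Sigma Y$-component of $h'\circ\Sigma f$, so that the vanishing of $\Sigma f'$ alone controls the vanishing of $\Sigma f$.
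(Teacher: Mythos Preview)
Your proof is correct and is essentially identical to the paper's own argument: both directions, the construction of the splitting equivalence $(\psi'\vee\Sigma q')\circ\sigma$, the co-H-map computation for $\Sigma f$, and the appeal to $\psi'\circ\Sigma f\simeq\ast$ from Lemma~\ref{2_Y criteria}(4) all match. The only difference is notation (the paper writes $h$ and $\psi$ where you write $h'$ and $\psi'$).
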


\begin{proof}
The necessity is obvious. Assume that $\Sigma f'$ is null-homotopic. Since the composition
\[
h:\Sigma M_3\overset{\sigma}{\longrightarrow}\Sigma M_3\vee\Sigma M_3\overset{\psi\vee\Sigma q'}{\longrightarrow}\Sigma Y\vee\Sigma C_{\varphi}
\]
is a homotopy equivalence, it suffices to show that $h\circ\Sigma f$ is null-homotopic. Consider the homotopy commutative diagram
\[\xymatrix{
S^4\ar[r]^-{\sigma}\ar[d]^-{\Sigma f}	&S^4\vee S^4\ar[d]^-{\Sigma f\vee\Sigma f}				&\\
\Sigma M_3\ar[r]^-{\sigma}				&\Sigma M_3\vee\Sigma M_3\ar[r]^-{\psi\vee\Sigma q'}	&\Sigma Y\vee\Sigma C_{\varphi}
}\]
where the two $\sigma$'s are the comultiplications of $S^4$ and $\Sigma M_3$. The lower direction around the diagram is $h\circ\Sigma f$, and the upper direction around the diagram is
\[
(\psi\circ\Sigma f\vee\Sigma q'\circ\Sigma f)\circ\sigma\simeq(\psi\circ\Sigma f\vee\Sigma f')\circ\sigma
\]
By hypothesis $\Sigma f'$ is null-homotopic, and by Lemma~\ref{2_Y criteria}(4) $\psi\circ\Sigma f$ is null-homotopic. Therefore $h\circ\Sigma f$ is null-homotopic and hence so is $\Sigma f$.
\end{proof}

\begin{lemma}\label{q isom on H free}
Let $H^2_{\text{free}}(X)$ be the free part of $H^2(X)$ for any space $X$. If $H^2(Y)$ is torsion, then $q^*:H^2_{\text{free}}(C_{\phi})\to H^2_{\text{free}}(M)$ is an isomorphism. Moreover, if for any $\alpha'\in H^2_{\text{free}}(M)$ there exists $\beta'\in H^2_{\text{free}}(M)$ such that $\alpha'\cup\beta'\in H^4(M)$ is a generator, then for any $\alpha\in H^2_{\text{free}}(C_{\phi})$ there exists $\beta\in H^2_{\text{free}}(C_{\phi})$ such that $\alpha\cup\beta\in H^4(C_{\phi})$ is a generator.
\end{lemma}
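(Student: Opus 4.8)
The plan is to reduce both assertions to the wedge splitting of $\Sigma M$ supplied by the hypotheses, together with the two cohomological isomorphisms already in hand. Since $\phi$ satisfies the conditions of Lemma~\ref{2_Y criteria}, condition~(3) of that lemma (equivalently the first conclusion of Theorem~\ref{gaugedecomp}) gives a homotopy equivalence $\Sigma M\simeq\Sigma C_{\phi}\vee\Sigma Y$ under which $\Sigma\phi$ is the inclusion of $\Sigma Y$. Applying $\tilde H^*(-)$ and the suspension isomorphism, I would record the resulting splitting
\[
\tilde H^n(M)\cong\tilde H^n(C_{\phi})\oplus\tilde H^n(Y)
\]
in each degree $n$, compatibly with the cofiber sequence, so that $q^*$ is the inclusion of the $\tilde H^n(C_{\phi})$-summand and $\phi^*$ is the projection onto the $\tilde H^n(Y)$-summand. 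In particular $q^*\colon H^2(C_{\phi})\to H^2(M)$ is a split injection whose complementary summand is isomorphic to $H^2(Y)$.

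For the first assertion I would specialize to $n=2$. Because $H^2(Y)$ is torsion by hypothesis, the complementary summand above is torsion, so the torsion subgroup of $H^2(M)$ is $q^*\bigl(T(H^2(C_{\phi}))\bigr)\oplus H^2(Y)$. Passing to free parts (quotients by torsion), the $H^2(Y)$-summand dies and $q^*$ induces an isomorphism $H^2_{\text{free}}(C_{\phi})\xrightarrow{\cong}H^2_{\text{free}}(M)$, which is the first claim.

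For the second assertion I would first note that $H^4(M)\cong\Z$ (this is part of the standing set-up, since $[M,BG]\cong H^4(M)\cong\Z$) and, by Lemma~\ref{2_map_t}, that $q^*\colon H^4(C_{\phi})\to H^4(M)$ is an isomorphism, so $H^4(C_{\phi})\cong\Z$ as well. A small but essential observation is that any torsion class $\tau\in H^2$ satisfies $\tau\cup\beta=0$ in $H^4$, since the product is a torsion element of $\Z$ and hence zero; consequently the cup-product pairing $H^2\times H^2\to H^4$ descends to a well-defined pairing on $H^2_{\text{free}}$, on both $M$ and $C_{\phi}$. Given $\alpha\in H^2_{\text{free}}(C_{\phi})$, I would set $\alpha'=q^*\alpha\in H^2_{\text{free}}(M)$, invoke the hypothesis on $M$ to produce $\beta'\in H^2_{\text{free}}(M)$ with $\alpha'\cup\beta'$ a generator of $H^4(M)$, and then use the first assertion to pull $\beta'$ back to a class $\beta\in H^2_{\text{free}}(C_{\phi})$ with $q^*\beta=\beta'$. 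Naturality of the cup product gives $q^*(\alpha\cup\beta)=\alpha'\cup\beta'$, a generator of $H^4(M)$; since $q^*$ is an isomorphism on $H^4$, it follows that $\alpha\cup\beta$ generates $H^4(C_{\phi})$, as required.

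The step I expect to require the most care is justifying that $q^*$ is not merely injective on $H^2$ (which the exact sequence in the proof of Lemma~\ref{2_map_t} already gives once the connecting map is null-homotopic) but realizes $H^2(C_{\phi})$ as a direct summand of $H^2(M)$ with torsion complement. This is exactly what the wedge splitting $\Sigma M\simeq\Sigma C_{\phi}\vee\Sigma Y$ provides; without it, an injection between free abelian groups of equal rank need not be surjective on free parts. The only other subtlety is the well-definedness of the cup pairing on free parts, which is handled by the torsion observation above.
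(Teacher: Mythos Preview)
Your proof is correct and follows essentially the same route as the paper: both obtain the direct-sum splitting $\tilde H^k(M)\cong\tilde H^k(C_\phi)\oplus\tilde H^k(Y)$ (the paper from the split long exact sequence of the cofibration, you from the equivalent wedge decomposition $\Sigma M\simeq\Sigma C_\phi\vee\Sigma Y$), then use the torsion hypothesis on $H^2(Y)$ for the first claim and naturality of cup product together with the $H^4$-isomorphism for the second. Your added remark that the cup pairing descends to $H^2_{\text{free}}$ because torsion classes cup to zero in $H^4\cong\Z$ is a point the paper leaves implicit.
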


\begin{proof}
Cofibration~(\ref{eqn_cofib phi}) induces the long exact sequence of cohomology groups
\[
\cdots\longrightarrow H^k(C_{\phi})\overset{(\Sigma q)^*}{\longrightarrow}H^k(M)\overset{(\Sigma\phi)^*}{\longrightarrow}H^k(Y)\longrightarrow\cdots
\]
Since $\Sigma\phi$ has a left homotopy inverse, the sequence splits for $k\geq1$ and we have
\[
H^k(M)\cong H^k(Y)\oplus H^k(C_{\phi}).
\]
By hypothesis $H^2(Y)$ is torsion, so $H^2_{\text{free}}(C_{\phi})$ is isomorphic to $H^2_{\text{free}}(M)$.

For any $\alpha\in H^2_{\text{free}}(C_{\phi})$, $q^*(\alpha)$ is in $H^2_{\text{free}}(M)$. By hypothesis there exists $\beta'\in H^2_{\text{free}}(M)$ such that $q^*(\alpha)\cup\beta'\in H^4(M)$ is a generator. Since $q^*:H^2_{\text{free}}(C_{\phi})\to H^2_{\text{free}}(M)$ is an isomorphism, there exists $\beta\in H^2_{\text{free}}(C_{\phi})$ such that $q^*(\beta)=\beta'$. Therefore we have $q^*(\alpha)\cup\beta'=q^*(\alpha\cup\beta)$. Observe that $q^*:H^4(C_{\phi})\to H^4(M)$ is an isomorphism since $Y$ has dimension at most 3. It follows that $\alpha\cup\beta$ is a generator of $H^4(C_{\phi})$.
\end{proof}

The second part of Lemma~\ref{q isom on H free} says that the cup product on $H^2_{\text{free}}(C_{\phi})$ is unimodular if the cup product on $H^2_{\text{free}(M)}$ is unimodular, which follows from Poincar\'{e} Duality when~$M$ is an orientable compact manifold. Furthermore, if $C_{\phi}$ has a subcomplex $Y'$ satisfying Theorem~\ref{gaugedecomp}, then the cup product on $H^2_{\text{free}}(C_{\phi}/Y')$ is still unimodular.

Next we consider two variations of Theorem~\ref{gaugedecomp} when $M$ has a special structure.

\begin{lemma}\label{2_Y1 vee Y2}
Suppose that $M_3$ is homotopy equivalent to $Z\vee Z'$. Let $Y$ and $Y'$ be CW-complexes of dimension at most 3, and let $\varphi:Y\to Z$ and $\varphi':Y'\to Z'$ be maps. If $\Sigma\varphi$ and~$\Sigma\varphi'$ have left homotopy inverses $\psi$ and $\psi'$ respectively and the compositions
\[
\begin{array}{c c c}
S^4\overset{\Sigma f}{\longrightarrow}\Sigma M_3\overset{pinch}{\longrightarrow}\Sigma Z\overset{\psi}{\longrightarrow}\Sigma Y
&\text{and}
&S^4\overset{\Sigma f}{\longrightarrow}\Sigma M_3\overset{pinch}{\longrightarrow}\Sigma Z'\overset{\psi'}{\longrightarrow}\Sigma Y',
\end{array}
\]
are null-homotopic, then we have
\[
\begin{array}{c c c}
\Sigma M\simeq\Sigma M'\vee\Sigma Y\vee\Sigma Y'
&\text{and}
&\G_t(M)\simeq\G_t(M')\times\map^*(\Sigma Y, BG)\times\map^*(\Sigma Y', BG)
\end{array}
\]
where $M'$ is the cofiber of the map $Y\vee Y'\overset{\varphi\vee\varphi'}{\longrightarrow}Z\vee Z'\hookrightarrow M$.
\end{lemma}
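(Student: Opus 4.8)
The plan is to apply Theorem~\ref{gaugedecomp} twice, peeling off $Y$ first and then $Y'$, rather than splitting off $Y\vee Y'$ in a single application. A one-step argument would require verifying condition~(2) of Lemma~\ref{2_Y criteria} for the map $\varphi\vee\varphi'\colon Y\vee Y'\to Z\vee Z'\simeq M_3$, whose natural left homotopy inverse is $\psi\vee\psi'\colon\Sigma Z\vee\Sigma Z'\to\Sigma Y\vee\Sigma Y'$; this would demand $(\psi\vee\psi')\circ\Sigma f\simeq\ast$. The two hypotheses only control the two coordinate projections $\psi\circ pinch_{\Sigma Z}\circ\Sigma f$ and $\psi'\circ pinch_{\Sigma Z'}\circ\Sigma f$, and a Whitehead-product component of $\Sigma f$ landing in the summand $\Sigma(Z\wedge Z')$ of $\Sigma(Z\vee Z')$ could in principle obstruct the combined nullity. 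Splitting in two stages sidesteps this, because at each stage the left homotopy inverse I use factors through a single pinch map, and a pinch annihilates such interaction terms.

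First I would split off $Y$. Put $\varphi_1\colon Y\xrightarrow{\varphi}Z\hookrightarrow Z\vee Z'\simeq M_3$ and $\phi_1=\imath\circ\varphi_1\colon Y\to M$. I claim $\phi_1$ satisfies condition~(2) of Lemma~\ref{2_Y criteria}. Writing $j_{\Sigma Z}\colon\Sigma Z\to\Sigma Z\vee\Sigma Z'$ for the inclusion, we have $\Sigma\varphi_1\simeq j_{\Sigma Z}\circ\Sigma\varphi$, so $\psi_1'=\psi\circ pinch_{\Sigma Z}\colon\Sigma M_3\simeq\Sigma Z\vee\Sigma Z'\to\Sigma Y$ is a left homotopy inverse of $\Sigma\varphi_1$, since $pinch_{\Sigma Z}\circ j_{\Sigma Z}\simeq\one_{\Sigma Z}$ gives $\psi_1'\circ\Sigma\varphi_1\simeq\psi\circ\Sigma\varphi\simeq\one_{\Sigma Y}$; and $\psi_1'\circ\Sigma f=\psi\circ pinch_{\Sigma Z}\circ\Sigma f\simeq\ast$ is exactly the first hypothesis. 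Theorem~\ref{gaugedecomp} then yields $\Sigma M\simeq\Sigma C_{\phi_1}\vee\Sigma Y$ and $\G_t(M)\simeq\G_t(C_{\phi_1})\times\map^*(Y,G)$. Reading off diagram~(\ref{diagram_CW structure of C_phi}) for $\phi_1$, the cofiber $C_{\varphi_1}$ of $\varphi_1$ is $C_\varphi\vee Z'$ (coning off $Y$ affects only the $Z$-summand), and $C_{\phi_1}$ is obtained from $C_{\varphi_1}$ by attaching one $4$-cell along $f''=q_1'\circ f$, where the quotient map $q_1'\colon M_3\to C_{\varphi_1}$ is the cofiber inclusion $Z\to C_\varphi$ on the first summand and the identity on $Z'$.

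Next I would split off $Y'$ from $C_{\phi_1}$. Put $\varphi_2\colon Y'\xrightarrow{\varphi'}Z'\hookrightarrow Z'\vee C_\varphi\simeq C_{\varphi_1}$ and $\phi_2\colon Y'\xrightarrow{\varphi_2}C_{\varphi_1}\hookrightarrow C_{\phi_1}$, and apply Theorem~\ref{gaugedecomp} to $C_{\phi_1}$ with subcomplex $C_{\varphi_1}$ and attaching map $f''$. Here $\psi_2'=\psi'\circ pinch_{\Sigma Z'}\colon\Sigma C_{\varphi_1}\simeq\Sigma Z'\vee\Sigma C_\varphi\to\Sigma Y'$ is a left homotopy inverse of $\Sigma\varphi_2$, and the key computation is that, since $q_1'$ is the identity on $Z'$ and carries $Z$ into $C_\varphi$, one has $pinch_{\Sigma Z'}\circ\Sigma q_1'\simeq pinch_{\Sigma Z'}$ (both onto the $\Sigma Z'$-summand). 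Hence $\psi_2'\circ\Sigma f''=\psi'\circ pinch_{\Sigma Z'}\circ\Sigma q_1'\circ\Sigma f\simeq\psi'\circ pinch_{\Sigma Z'}\circ\Sigma f\simeq\ast$ by the second hypothesis. So $\phi_2$ again satisfies condition~(2) of Lemma~\ref{2_Y criteria}, and Theorem~\ref{gaugedecomp} gives $\Sigma C_{\phi_1}\simeq\Sigma C_{\phi_2}\vee\Sigma Y'$ and $\G_t(C_{\phi_1})\simeq\G_t(C_{\phi_2})\times\map^*(Y',G)$.

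Finally I would identify $C_{\phi_2}$ with $M'$ and combine. Coning off $Y$ and then $Y'$ is the same as coning off $Y\vee Y'$ along $\varphi\vee\varphi'$, because the two cones are attached to the disjoint wedge summands $Z$ and $Z'$, so the iterated pushout equals the single pushout $M\cup_{\varphi\vee\varphi'}C(Y\vee Y')=M'$; thus $C_{\phi_2}\simeq M'$. Stacking the two stages and using $\map^*(\Sigma Y,BG)\simeq\map^*(Y,G)$ gives $\Sigma M\simeq\Sigma M'\vee\Sigma Y\vee\Sigma Y'$ and $\G_t(M)\simeq\G_t(M')\times\map^*(\Sigma Y,BG)\times\map^*(\Sigma Y',BG)$, as claimed. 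The step needing the most care is the second application of Theorem~\ref{gaugedecomp}: the subcomplex $C_{\varphi_1}\simeq C_\varphi\vee Z'$ can be $4$-dimensional, so it is not literally a $3$-skeleton. This is harmless, since the proofs of Lemmas~\ref{2_map_t} and~\ref{lemma_fiber of q*} and of Theorem~\ref{gaugedecomp} use only that the ambient space is built by attaching a single $4$-cell to a subcomplex—so that the collapse onto $S^4$ and the coaction map exist—together with $\dim Y'\le 3$ (to kill $H^4$); the dimension of the subcomplex itself is never used.
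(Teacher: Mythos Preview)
Your two-stage argument is correct, but it takes a different route from the paper, which applies Theorem~\ref{gaugedecomp} in a single step to the map $\Phi=\varphi\vee\varphi'\colon Y\vee Y'\to Z\vee Z'\simeq M_3$. Your worry that a Whitehead-product component of $\Sigma f$ could obstruct the nullity of $(\psi\vee\psi')\circ\Sigma f$ is exactly what the paper confronts head-on: it uses that $\Sigma f$ is a co-H-map (its source is $S^4$) together with the homotopy equivalence $h=(p_1\vee p_2)\circ\sigma\colon\Sigma M_3\to\Sigma Z\vee\Sigma Z'$ to compute
\[
(\psi\vee\psi')\circ h\circ\Sigma f\;\simeq\;\bigl((\psi\circ p_1\circ\Sigma f)\vee(\psi'\circ p_2\circ\Sigma f)\bigr)\circ\sigma_{S^4},
\]
which is null because each wedge factor is null by hypothesis. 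This co-H trick dissolves the Whitehead-product issue and lets the one-step verification of condition~(2) in Lemma~\ref{2_Y criteria} go through cleanly. Your route avoids that trick at the cost of a second invocation of Theorem~\ref{gaugedecomp} in a setting where the ``$3$-skeleton'' $C_{\varphi_1}\simeq C_\varphi\vee Z'$ may be $4$-dimensional; your justification that the proofs of Lemmas~\ref{2_map_t} and~\ref{lemma_fiber of q*} and of Theorem~\ref{gaugedecomp} use only $\dim Y'\le 3$, $H^4\cong\Z$, and the single-$4$-cell coaction is sound, but the paper's direct approach sidesteps that extra verification entirely.
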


\begin{proof}
Let $\Phi$ be the composition
\[
\Phi:Y\vee Y'\overset{\varphi\vee\varphi'}{\longrightarrow}Z\vee Z'\simeq M_3.
\]
The map $\psi\vee\psi':\Sigma Z\vee\Sigma Z'\to\Sigma Y\vee\Sigma Y'$ is a left homotopy inverse of $\Sigma\Phi$. We show that~$(\psi\vee\psi')\circ\Sigma f$ is null-homotopic, implying that $\Phi$ satisfies the hypothesis of Theorem~\ref{gaugedecomp}.

Notice that the composition
\[
h:\Sigma M_3\overset{\sigma}{\longrightarrow}\Sigma M_3\vee\Sigma M_3\overset{p_1\vee p_2}{\longrightarrow}\Sigma Z\vee\Sigma Z'
\]
is a homotopy equivalence, where $p_1:\Sigma M_3\to\Sigma Z$ and $p_2:\Sigma M_3\to\Sigma Z'$ are the pinch maps. Since $\Sigma f$ is a co-H-map, we have
\begin{eqnarray*}
(\psi\vee\psi')\circ h\circ\Sigma f
&\simeq&(\psi\vee\psi')\circ (p_1\vee p_2)\circ\sigma\circ\Sigma f\\
&\simeq&(\psi\vee\psi')\circ (p_1\vee p_2)\circ(\Sigma f\vee\Sigma f)\circ\sigma\\
&\simeq&(\psi\circ p_1\circ\Sigma f\vee\psi'\circ p_2\circ\Sigma f)\circ\sigma
\end{eqnarray*}
which is null-homotopic by assumption. Therefore $(\psi\vee\psi')\circ\Sigma f$ is null-homotopic and Theorem~\ref{gaugedecomp} implies the asserted statement.
\end{proof}

\begin{lemma}\label{connected sum}
Suppose $M\cong X\# X'$ where $X$ and $X'$ are orientable, smooth, connected, closed 4-manifolds. Let $Y$ and $Y'$ be CW-complexes of dimensions at most 3 and let $\varphi:Y\to X_3$ and $\varphi':Y'\to X'_3$ be maps satisfying the hypothesis of Theorem~\ref{gaugedecomp}. Then we have
\[
\begin{array}{c c c}
\Sigma M\simeq\Sigma M'\vee\Sigma Y\vee\Sigma Y'
&\text{and}
&\G_t(M)\simeq\G_t(M')\times\map^*(\Sigma Y, BG)\times\map^*(\Sigma Y', BG)
\end{array}
\]
where $M'$ is the cofiber of the inclusion $Y\vee Y'\overset{\varphi\vee\varphi'}{\longrightarrow}X_3\vee X'_3\hookrightarrow M$.
\end{lemma}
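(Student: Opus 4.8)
The plan is to reduce the statement to Lemma~\ref{2_Y1 vee Y2} applied with $Z=X_3$ and $Z'=X_3'$. The only genuinely new input needed is a homotopy-theoretic description of the $3$-skeleton of the connected sum $M=X\# X'$ together with the attaching map of its top cell; once that is available, both hypotheses of Lemma~\ref{2_Y1 vee Y2} follow by a formal check and the conclusion is immediate.

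First I would fix convenient cell structures. Since $X$ and $X'$ are smooth, closed and connected, each admits a handle decomposition with a single $4$-handle, hence a CW structure with a single $4$-cell; write $X\simeq X_3\cup_{f_X}e^4$ and $X'\simeq X_3'\cup_{f_{X'}}e^4$ with attaching maps $f_X\colon S^3\to X_3$ and $f_{X'}\colon S^3\to X_3'$. Performing the connected sum inside these top cells gives $X\setminus\mathrm{int}\,D^4\simeq X_3$ and $X'\setminus\mathrm{int}\,D^4\simeq X_3'$, and exhibits $M$ as the homotopy pushout (double mapping cylinder) of $X_3\xleftarrow{f_X}S^3\xrightarrow{f_{X'}}X_3'$. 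Collapsing the arc through the basepoint shows that $X_3\vee X_3'$ sits inside $M$ with $M/(X_3\vee X_3')\simeq\Sigma S^3=S^4$, so $M_3\simeq X_3\vee X_3'$ and $M\simeq(X_3\vee X_3')\cup_f e^4$ for the resulting attaching map $f$. Moreover, collapsing the summand $X_3'\subset M$ yields $M/X_3'\simeq X$ and collapsing $X_3$ yields $M/X_3\simeq X'$; comparing the induced cell structures shows $p_1\circ f\simeq f_X$ and $p_2\circ f\simeq f_{X'}$, where $p_1\colon M_3\to X_3$ and $p_2\colon M_3\to X_3'$ are the pinch maps.

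Next I would verify the hypotheses of Lemma~\ref{2_Y1 vee Y2}. By assumption $\varphi$ and $\varphi'$ satisfy the hypothesis of Theorem~\ref{gaugedecomp} for $X$ and $X'$ respectively, so Lemma~\ref{2_Y criteria}(2) provides left homotopy inverses $\psi$ of $\Sigma\varphi$ and $\psi'$ of $\Sigma\varphi'$ with $\psi\circ\Sigma f_X\simeq\ast$ and $\psi'\circ\Sigma f_{X'}\simeq\ast$. Suspending the maps from the previous paragraph and using functoriality of suspension, the pinch $\Sigma p_1$ satisfies $\Sigma p_1\circ\Sigma f\simeq\Sigma(p_1\circ f)\simeq\Sigma f_X$, and similarly $\Sigma p_2\circ\Sigma f\simeq\Sigma f_{X'}$. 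Hence
\[
\psi\circ\Sigma p_1\circ\Sigma f\simeq\psi\circ\Sigma f_X\simeq\ast,\qquad \psi'\circ\Sigma p_2\circ\Sigma f\simeq\psi'\circ\Sigma f_{X'}\simeq\ast,
\]
which are precisely the nullity conditions required by Lemma~\ref{2_Y1 vee Y2} with $Z=X_3$ and $Z'=X_3'$. Applying that lemma yields the asserted equivalences $\Sigma M\simeq\Sigma M'\vee\Sigma Y\vee\Sigma Y'$ and $\G_t(M)\simeq\G_t(M')\times\map^*(\Sigma Y,BG)\times\map^*(\Sigma Y',BG)$, with $M'$ the cofiber of $Y\vee Y'\xrightarrow{\varphi\vee\varphi'}X_3\vee X_3'\hookrightarrow M$, as claimed.

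The main obstacle is the second paragraph: justifying that $(X\# X')_3\simeq X_3\vee X_3'$ and that the pinch maps recover the original attaching maps, i.e.\ $p_i\circ f\simeq f_{X_i}$. This rests on choosing single-top-cell CW structures on $X$ and $X'$ and on identifying the connected sum with the double mapping cylinder of the two attaching maps; once this is in place, the verification of the hypotheses of Lemma~\ref{2_Y1 vee Y2} and hence the whole conclusion are purely formal. I expect the orientation-reversing gluing in the connected sum to affect $f$ only by a sign on one summand, which is irrelevant after composing with $\psi$ or $\psi'$.
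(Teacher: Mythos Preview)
Your proposal is correct and follows essentially the same approach as the paper: both reduce to Lemma~\ref{2_Y1 vee Y2} by identifying $M_3\simeq X_3\vee X_3'$, checking that the pinches of the attaching map recover $f_X$ and $f_{X'}$, and then invoking Lemma~\ref{2_Y criteria}(2) to obtain the required nullity conditions. The only difference is cosmetic: the paper simply asserts that the attaching map of the top cell of $M$ is $f_\#=(f_X\vee f_{X'})\circ\sigma$, from which $p_i\circ f_\#\simeq f_{X_i}$ is immediate, whereas your pushout/collapse description supplies the justification for this standard fact (and your remark that the orientation sign is harmless is correct).
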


\begin{proof}
Let $f:S^3\to X_3$ and $f':S^3\to X'_3$ be the attaching maps of the 4-cells in $X$ and $X'$ respectively. By Lemma~\ref{2_Y criteria}, $\Sigma\varphi$ and $\Sigma\varphi'$ have left homotopy inverses $\psi$ and $\psi'$ and $\psi\circ\Sigma f$ and $\psi'\circ\Sigma f'$ are null-homotopic. We show that $\varphi$ and $\varphi'$ satisfy the hypothesis in Lemma~\ref{2_Y1 vee Y2}.

The 3-skeleton of $M$ is $X_3\vee X'_3$ and the attaching map of the 4-cell is
\[
f_{\#}:S^3\overset{\sigma}{\longrightarrow}S^3\vee S^3\overset{f\vee f'}{\longrightarrow}X_3\vee X'_3
\]
Let $p_1:\Sigma X_3\vee\Sigma X'_3\to\Sigma X_3$ and $p_2:\Sigma X_3\vee\Sigma X'_3\to\Sigma X'_3$ be the pinch maps. Consider the homotopy commutative diagram
\[\xymatrix{
S^4\ar@{=}[d]\ar[r]^-{\Sigma f_{\#}}	&\Sigma X_3\vee\Sigma X'_3\ar[d]^-{p_1}	&\\
S^4\ar[r]^-{\Sigma f}					&\Sigma X_3\ar[r]^-{\psi}				&\Sigma Y
}\]
Since $\psi\circ\Sigma f$ is null-homotopic, the composition $\psi\circ p_1\circ\Sigma f_{\#}$ is null-homotopic. Similarly~\mbox{$\psi'\circ p_2\circ\Sigma f_{\#}$} is null-homotopic. Then Lemma~\ref{2_Y1 vee Y2} implies the statement.
\end{proof}

\subsection{Gauge groups over simply-connected 4-manifolds}
Now we revise homotopy equivalence~(\ref{simply_conn_decomp}) using Theorem~\ref{gaugedecomp}. When $M$ is simply-connected, its 3-skeleton $M_3$ is homotopy equivalent to $\bigvee^n_{i=1}S^2$. If $\Sigma f$ is null-homotopic, then we can apply Theorem~\ref{gaugedecomp} by taking $Y$ to be the whole of $M_3$ and $\varphi:Y\to M_3$ to be the identity map and get the homotopy equivalence~(\ref{simply_conn_decomp}). In the following we assume the homotopy class of $\Sigma f$ is not trivial.

To distinguish the 2-spheres, denote the $i^{\text{th}}$ copy of $S^2$ in $M_3$ by $S^2_i$. Let $\eta:S^3\to S^2$ be the Hopf map and let $\eta_i$ be the composition
\[
\eta_i:S^3\overset{\eta}{\longrightarrow}S^2_i\hookrightarrow\bigvee^n_{i=1}S^2_i.
\]
We also denote the suspensions $\Sigma\eta$ and $\Sigma\eta_i$ by $\bar{\eta}$ and $\bar{\eta}_i$ for short.

\begin{lemma}\label{suspended f as sum}
If $M_3$ is homotopy equivalent to $\bigvee^n_{i=1}S^2_i$, then $\Sigma f$ is homotopic to~\mbox{$\sum^n_{i=1}a_i\bar{\eta}_i$} where $a_i\in\Z/2\Z$.
\end{lemma}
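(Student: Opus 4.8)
The plan is to compute the homotopy group $\pi_4(\Sigma M_3)$ directly and read off the claimed formula. Since $M_3\simeq\bigvee_{i=1}^n S^2_i$, suspending gives $\Sigma M_3\simeq\bigvee_{i=1}^n S^3_i$, so $\Sigma f$ represents a class in $\pi_4(\bigvee_{i=1}^n S^3)$. The strategy is to show that this group is isomorphic to $\bigoplus_{i=1}^n\pi_4(S^3)\cong(\Z/2\Z)^n$, with the classes $\bar{\eta}_i$ as generators of the summands; the desired expression for $\Sigma f$ then follows immediately by writing it in terms of these generators.

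The main tool is the Hilton--Milnor theorem, which identifies $\pi_4(\bigvee_{i=1}^n S^3)$ with a direct sum $\bigoplus_w\pi_4(S^{d_w})$ indexed by basic products $w$ on the $n$ generators, where a basic product of weight $r$ contributes a sphere of dimension $d_w=2r+1$. The weight-one products are precisely the $n$ inclusions $S^3_i\hookrightarrow\bigvee_j S^3_j$ and each contributes a copy of $\pi_4(S^3)$; every product of weight $r\geq2$ contributes $\pi_4(S^{d_w})$ with $d_w=2r+1\geq5$, and $\pi_4(S^m)=0$ for $m\geq5$. Hence only the weight-one terms survive, giving $\pi_4(\bigvee_{i=1}^n S^3)\cong\bigoplus_{i=1}^n\pi_4(S^3)$, where the $i$-th summand is detected by the projection $r_i:\bigvee_j S^3_j\to S^3_i$.

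It remains to identify $\pi_4(S^3)$ together with its generator. We have $\pi_4(S^3)\cong\Z/2\Z$, generated by the suspension $\bar{\eta}=\Sigma\eta$ of the Hopf map $\eta:S^3\to S^2$. Under the inclusion into the $i$-th summand, this generator is carried to $\Sigma\eta_i=\bar{\eta}_i$, matching the notation of the excerpt. Writing $a_i\in\Z/2\Z$ for the image of $\Sigma f$ under $r_{i*}$, the Hilton--Milnor decomposition gives $\Sigma f\simeq\sum_{i=1}^n a_i\bar{\eta}_i$, as claimed.

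I do not expect a serious obstacle: the computation is a routine application of Hilton--Milnor, and the only point requiring care is verifying that no Whitehead-product (weight $\geq2$) terms contribute, i.e. that $4<2r+1$ for all $r\geq2$, which is clear. If one prefers to avoid quoting Hilton--Milnor, an alternative is to use the split injection $\bigoplus_i\pi_4(S^3_i)\to\pi_4(\bigvee_j S^3_j)$ induced by the inclusions, together with the retractions $r_i$; this reduces the statement to the surjectivity of that injection, which again follows from the connectivity of the relevant Whitehead products.
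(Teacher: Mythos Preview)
Your proof is correct. Both your argument and the paper's use the Hilton--Milnor theorem, but at different stages. The paper applies Hilton--Milnor to $\pi_3(\bigvee_i S^2_i)$ \emph{before} suspending: it writes $f\simeq\sum_i\tilde a_i\eta_i+w$ with $w$ a sum of Whitehead products, then suspends and uses that suspensions of Whitehead products are null-homotopic and that $\bar\eta$ has order~$2$, so that $a_i\equiv\tilde a_i\pmod 2$. You instead suspend first and apply Hilton--Milnor directly to $\pi_4(\bigvee_i S^3_i)$, where the higher-weight summands vanish simply because $\pi_4(S^{2r+1})=0$ for $r\geq2$. Your route is slightly cleaner in that it replaces the fact about suspended Whitehead products by a connectivity count; the paper's route, on the other hand, records the extra information that each $a_i$ is the mod-$2$ reduction of the corresponding Hopf-map coefficient of $f$ itself, which is not visible from your argument.
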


\begin{proof}
The Hilton-Milnor Theorem implies
\[
\pi_3(\bigvee^n_{i=n}S^2_i)\cong\bigoplus^n_{i=1}\pi_3(S^2_i)\oplus\bigoplus_{i\neq j}\pi_3(\Sigma S^1_i\wedge S^1_j),
\]
where $\pi_3(S^2_i)$ is generated by $\eta_i$ and $\pi_3(\Sigma S^1_i\wedge S^1_j)$ is generated by Whitehead products of the identity maps on $S^2_i$ and $S^2_j$. As an element in $\pi_3(\bigvee^n_{i=1}S^2_i)$, $f$ is homotopic to $\sum^n_{i=1}\tilde{a}_i\eta_i+w$ where $\tilde{a}_i$ is an integer and $w$ is a sum of Whitehead products in $\bigoplus_{i\neq j}\pi_3(S^2_i\wedge S^1_j)$. After suspension $\Sigma f$ is homotopic to $\sum^n_{i=1}a_i\bar{\eta}_i$, where $a_i\equiv\tilde{a}_i\pmod{2}$, since suspensions of Whitehead products are null-homotopic and $\bar{\eta}_i$ has order 2.
\end{proof}

If the homotopy class of $\Sigma f$ is not trivial, then at least one of the $a_i$'s is not zero. Relabelling the spheres if necessary, we may assume that $\Sigma f$ is homotopic to $\sum^m_{i=1}\bar{\eta}_i$ for some integer $m$ such that $1\leq m\leq n$. Then we can simplify this expression with the following lemma.

\begin{lemma}\label{2_simplify Sigma f}
If $M_3$ is homotopy equivalent to $\bigvee^n_{i=1}S^2_i$ and $\Sigma f$ is homotopic to $\sum^m_{i=1}\bar{\eta}_i$, then there is a map $\tilde{f}:S^3\to M_3$ such that its cofiber $C_{\tilde{f}}$ is homotopy equivalent to $M$ and~$\Sigma\tilde{f}$ is homotopic to $\bar{\eta}_1$. Moreover, $p_1\circ\tilde{f}$ is homotopic to $p_1\circ f$ where~\mbox{$p_1:\bigvee^n_{i=1}S^2_i\to S^2_1$} is the pinch map.
\end{lemma}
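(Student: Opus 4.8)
The plan is to obtain $\tilde f$ from $f$ by post-composing with a suitable self-homotopy equivalence of $M_3\simeq\bigvee^n_{i=1}S^2_i$. Any matrix $A=(a_{kj})\in GL_n(\Z)$ is realised by a self-map $g$ of $\bigvee^n_{i=1}S^2_i$ whose restriction to $S^2_j$ represents $\sum_k a_{kj}\iota_k$ in $\pi_2(\bigvee^n_{i=1}S^2_i)\cong\bigoplus_k\Z$, where $\iota_k$ is the inclusion of $S^2_k$; since such a $g$ induces an isomorphism on $H_2$ and the wedge is simply-connected, $g$ is a homotopy equivalence by Whitehead's theorem. I would set $\tilde f=g\circ f$. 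Because $g$ is a homotopy equivalence, the map of cofibre sequences induced by the square with sides $\one_{S^3}$ and $g$ shows $C_{\tilde f}=C_{g\circ f}\simeq C_f\simeq M$, so the cofibre condition comes for free.

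The crux is to compute the effect of $\Sigma g$ on $\Sigma f$ and to arrange the pinch condition simultaneously. First I would identify $\pi_4(\bigvee^n_{i=1}S^3_i)$: by the Hilton--Milnor theorem the basic products of weight $\geq2$ involve spheres of dimension $\geq5$, so $\pi_4(\bigvee^n_{i=1}S^3_i)\cong\bigoplus^n_{i=1}\pi_4(S^3_i)\cong\bigoplus^n_{i=1}\Z/2\Z$, generated by the $\bar\eta_i$. Next, writing $\iota^{(3)}_k$ for the inclusion of $S^3_k$, one has $\bar\eta_k=\iota^{(3)}_k\circ\bar\eta$, and since $\bar\eta=\Sigma\eta$ is a co-H-map precomposition by $\bar\eta$ is additive; combined with $2\bar\eta_k\simeq\ast$ this gives
\[
\Sigma g\circ\bar\eta_j\simeq\Big(\sum_k a_{kj}\,\iota^{(3)}_k\Big)\circ\bar\eta\simeq\sum_k(a_{kj}\bmod 2)\,\bar\eta_k.
\]
Thus $\Sigma g$ acts on $\bigoplus^n_{i=1}\Z/2\Z$ by the mod-$2$ reduction $\bar A$ of $A$, and since post-composition is additive, $\Sigma\tilde f=\Sigma g\circ\Sigma f\simeq\Sigma g\circ\sum^m_{i=1}\bar\eta_i$ corresponds to $\bar A$ applied to the vector $e_1+\cdots+e_m$, where $e_i$ is the class of $\bar\eta_i$.

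It therefore remains to choose $A\in GL_n(\Z)$ so that (i) $\bar A(e_1+\cdots+e_m)=e_1$ over $\Z/2\Z$, which yields $\Sigma\tilde f\simeq\bar\eta_1$, and (ii) the first row of $A$ equals $(1,0,\dots,0)$, which forces $p_1\circ g\simeq p_1$ (as $p_1\circ g$ restricted to $S^2_j$ is the degree-$a_{1j}$ self-map of $S^2$, agreeing with $p_1|_{S^2_j}$ precisely when $a_{1j}=\delta_{1j}$) and hence $p_1\circ\tilde f\simeq p_1\circ f$. I would take $A=I-N$, where $I$ is the identity matrix and $N$ has entry $1$ in positions $(i,1)$ for $2\leq i\leq m$ and $0$ elsewhere; then $\det A=1$, the first row of $A$ is $(1,0,\dots,0)$, and a direct check gives $\bar A(e_1+\cdots+e_m)=e_1$. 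The main obstacle is book-keeping the action on $\pi_4$ correctly---in particular justifying that precomposition with $\bar\eta$ is additive and reduces the degree matrix mod $2$, and that the $\bar\eta_i$ generate $\Z/2\Z$ summands---after which the matching of the two constraints via the explicit matrix $A$ is routine linear algebra.
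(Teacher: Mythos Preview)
Your proof is correct and is essentially the paper's argument recast in matrix form: the paper constructs the self-equivalence of $\bigvee_i S^2_i$ as an iterated composite of maps $\xi_j$ (each realising the elementary matrix $I+E_{j1}$ on $H_2$), whose product is $I+N$---the same as your $A=I-N$ modulo $2$ and with the same first row $(1,0,\dots,0)$---and then checks exactly your two conditions, that the equivalence sends $\sum_i\bar\eta_i$ to $\bar\eta_1$ and commutes with $p_1$. The only cosmetic difference is that the paper verifies the effect on $\pi_4$ coordinatewise via pinch maps rather than packaging it as the mod-$2$ linear action you describe.
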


\begin{proof}
For each $1<j\leq m$, define a map $\xi_j:M_3\rightarrow M_3$ as follows. On $S^2_1\vee S^2_j$, $\xi_j$ is the composition
\[
S^2_1\vee S^2_j\overset{\sigma\vee\one}{\longrightarrow}S^2_{1}\vee S^2_{1}\vee S^2_j\overset{\one\vee\triangledown}{\longrightarrow}S^2_{1}\vee S^2_{j},
\]
where $\sigma$ is a comultiplication of $S^2_1$ and $\triangledown$ is the folding map of $S^2_{1}$ and $S^2_j$. On the remaining spheres, $\xi_j$ is the identity. Consider the homotopy cofibration diagram
\[\xymatrix{
S^3\ar@{=}[d]\ar[r]^-{f}	&M_3\ar[r]\ar[d]^-{\xi_j}	&M\ar[d]^-{\tilde{\xi}_j}\\
S^3\ar[r]^-{\xi_j\circ f}	&M_3\ar[r]					&C_{\xi_j\circ f}
}\]
where $\tilde{\xi}_j$ is an induced map and $C_{\xi_j\circ f}$ is the cofiber of $\xi_j\circ f$. Since $\xi_j$ is a homology isomorphism, so is $\tilde{\xi}_j$ by the 5-lemma. Therefore $\tilde{\xi}_j$ is a homotopy equivalence and $M$ is homotopy equivalent to $C_{\xi_j\circ f}$.

By Lemma~\ref{suspended f as sum}, $\Sigma(\xi_j\circ f)$ is homotopic to $\sum^n_{i=1}a_i\bar{\eta}_i$, where $a_i\in\Z/2\Z$. For $1\leq i\leq n$, let~\mbox{$p_i:\bigvee^n_{i=1}S^3_i\to S^3_i$} be the pinch map. Then $a_i$ is $p_i\circ\Sigma(\xi_j\circ f)$. By the definition of $\xi_j$, $a_i=1$ for $i\neq j$ and~\mbox{$1\leq i\leq m$}, and $a_i=0$ otherwise, that is
\[
\Sigma(\xi_j\circ f)
\simeq\bar{\eta}_1+\cdots+\bar{\eta}_{j-1}+\bar{\eta}_{j+1}+\cdots+\bar{\eta}_m
\]
Let $\tilde{f}$ be $\xi_m\circ\cdots\circ\xi_2\circ f$ and let $C_{\tilde{f}}$ be its cofiber. Then $\Sigma\tilde{f}$ is homotopic to $\bar{\eta}_1$ and $C_{\tilde{f}}$ is homotopy equivalent to $M$.

Lastly, observe that $p_1\circ\xi_j$ is homotopic to $p_1$. It follows that $p_1\circ\xi_j\circ f$ is homotopic to~\mbox{$p_1\circ f$} and so is $p_1\circ\tilde{f}$.
\end{proof}

\begin{lemma}\label{gauge_1-conn}
Let $M$ be a 4-dimensional simply-connected CW-complex. If $\Sigma f$ is nontrivial, then there are homotopy equivalences
\[
\begin{array}{c c c}
\displaystyle{\Sigma M\simeq\Sigma\C\PP^2\vee(\bigvee^{n-1}_{i=1}S^3)}
&\text{and}
&\displaystyle{\G_t(M)\simeq\G_t(C_{a\eta})\times\prod^{n-1}_{i=1}\Omega^2G},
\end{array}
\]
where $a$ is an odd integer, $n$ is the rank of $H^2(M)$ and $C_{a\eta}$ is the cofiber of $a\eta$.
\end{lemma}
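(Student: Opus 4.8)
The plan is to peel off all but one of the $2$-spheres in $M_3$ by a single application of Theorem~\ref{gaugedecomp}, leaving behind a cofiber that is homotopy equivalent to $C_{a\eta}$. First I would normalize the attaching map of the top cell. Since $\Sigma f$ is nontrivial, Lemma~\ref{suspended f as sum} together with a relabelling of the spheres lets me assume $\Sigma f\simeq\sum^m_{i=1}\bar\eta_i$, and then Lemma~\ref{2_simplify Sigma f} produces a map $\tilde f\colon S^3\to M_3$ with $C_{\tilde f}\simeq M$, $\Sigma\tilde f\simeq\bar\eta_1$, and $p_1\circ\tilde f\simeq p_1\circ f$. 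So from the outset I may replace $f$ by $\tilde f$ and assume $\Sigma f\simeq\bar\eta_1$.

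Next I set $Y=\bigvee^n_{i=2}S^2_i$ and take $\varphi\colon Y\hookrightarrow M_3$ to be the inclusion of this sub-wedge. To invoke Theorem~\ref{gaugedecomp} I verify condition (2) of Lemma~\ref{2_Y criteria}: the pinch map $\psi'\colon\bigvee^n_{i=1}S^3_i\to\bigvee^n_{i=2}S^3_i$ collapsing the summand $S^3_1$ is a left homotopy inverse of $\Sigma\varphi$, and since $\Sigma\tilde f\simeq\bar\eta_1$ factors through exactly the summand $S^3_1$ that $\psi'$ collapses, the composite $\psi'\circ\Sigma\tilde f$ is null-homotopic. Theorem~\ref{gaugedecomp} then gives
\[
\Sigma M\simeq\Sigma C_\phi\vee\Sigma Y
\quad\text{and}\quad
\G_t(M)\simeq\G_t(C_\phi)\times\map^*(Y,G),
\]
where $\phi$ is the composite $Y\hookrightarrow M_3\hookrightarrow M$. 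Here $\map^*(Y,G)=\prod^n_{i=2}\Omega^2G=\prod^{n-1}\Omega^2G$ and $\Sigma Y=\bigvee^{n-1}S^3$, which already supplies the claimed $\prod^{n-1}\Omega^2G$ factor and the wedge of $3$-spheres.

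It remains to identify $C_\phi$. Reading off the bottom row of diagram~(\ref{diagram_CW structure of C_phi}), $C_\phi$ is obtained from $C_\varphi=M_3/Y\simeq S^2_1$ by attaching a $4$-cell along $f'=q'\circ\tilde f=p_1\circ\tilde f\colon S^3\to S^2_1$. Since $\Sigma f'=\Sigma p_1\circ\Sigma\tilde f\simeq\Sigma p_1\circ\bar\eta_1\simeq\bar\eta$ is the nontrivial element of $\pi_4(S^3)\cong\Z/2\Z$, while $\pi_3(S^2)\cong\Z$ is generated by $\eta$, the class of $f'$ must be $a\eta$ for an odd integer $a$. Hence $C_\phi\simeq C_{a\eta}$, giving $\G_t(M)\simeq\G_t(C_{a\eta})\times\prod^{n-1}\Omega^2G$. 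Finally, because $a$ is odd, the top-cell attaching maps $a\bar\eta$ and $\bar\eta$ coincide in $\pi_4(S^3)\cong\Z/2\Z$, so $\Sigma C_{a\eta}\simeq\Sigma\C\PP^2$; substituting this into $\Sigma M\simeq\Sigma C_\phi\vee\Sigma Y$ yields the stated suspension splitting.

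The only step that is not routine bookkeeping is the last one: correctly tracing $f'=p_1\circ\tilde f$ through the cofiber diagram and using $\Sigma f'\simeq\bar\eta$ to pin down simultaneously the oddness of $a$ (via the suspension $\pi_3(S^2)\to\pi_4(S^3)$) and the equivalence $\Sigma C_{a\eta}\simeq\Sigma\C\PP^2$. Everything else is a direct application of the decomposition machinery already established above.
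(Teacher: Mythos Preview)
Your argument is correct and matches the paper's proof essentially line for line: normalize via Lemmas~\ref{suspended f as sum} and~\ref{2_simplify Sigma f} so that $\Sigma f\simeq\bar\eta_1$, apply Theorem~\ref{gaugedecomp} with $Y=\bigvee_{i=2}^n S^2_i$, and then identify $C_\phi$ as $C_{a\eta}$ using the cofiber diagram and the oddness of $a$ coming from $\Sigma(p_1\circ f)\simeq\bar\eta$. The only cosmetic difference is that you explicitly verify condition~(2) of Lemma~\ref{2_Y criteria}, whereas the paper invokes Theorem~\ref{gaugedecomp} directly.
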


\begin{proof}
By Lemma~\ref{suspended f as sum} $\Sigma f$ is homotopic to $\sum^m_{i=1}\bar{\eta}_i$ for some $m$ such that $1\leq m\leq n$. By Lemma~\ref{2_simplify Sigma f} there exists a map $\tilde{f}$ such that $\Sigma\tilde{f}$ is homotopic to $\bar{\eta}_1$ and its cofiber $\tilde{M}$ is homotopy equivalent to $M$. Replacing $f$ by $\tilde{f}$ and $M$ by $\tilde{M}$, we can assume $\Sigma f\simeq\bar{\eta}_i$. Use Theorem~\ref{gaugedecomp} by taking $Y$ to be $\bigvee^n_{i=2}S^2_i$ and $\phi:\bigvee^n_{i=2}S^2_i\to M$ to be the inclusion and get
\[
\begin{array}{c c c}
\displaystyle{\Sigma M\simeq\Sigma C_{\phi}\vee\left(\bigvee^{n-1}_{i=1}S^3\right)}
&\text{and}
&\displaystyle{\G_t(M)\simeq\G_t(C_{\phi})\times\prod^{n-1}_{i=1}\Omega^2G,}
\end{array}
\]
where $C_{\phi}$ is the cofiber of $\phi$. We need to show that $C_{\phi}$ is homotopy equivalent to $C_{a\eta}$ for some odd integer $a$ and $\Sigma C_{\phi}$ is homotopy equivalent to $\Sigma\C\PP^2$.

Consider the cofibration diagram
\[\xymatrix{
\ast\ar[r]\ar[d]								&S^3\ar@{=}[r]\ar[d]^-{f}					&S^3\ar[d]^-{p_1\circ f}\\
\bigvee^n_{i=2}S^2_i\ar[r]^-{\imath}\ar@{=}[d]	&\bigvee^n_{i=1}S^2_i\ar[d]\ar[r]^-{p_1}	&S^2_1\ar[d]\\
\bigvee^n_{i=2}S^2_i\ar[r]^-{\phi}				&M\ar[r]									&C_{\phi}
}\]
where $\imath:\bigvee^n_{i=2}S^2_i\to\bigvee^n_{i=1}S^2_i$ is the inclusion, $p_1:\bigvee^n_{i=2}S^3_i\to S^2_1$ is the pinch map. Since~$p_1\circ f$ is in $\pi_3(S^2)\cong\Z$, it is homotopic to $a\eta$ for some integer $a$. The right column implies that~$C_{\phi}$ is homotopy equivalent to the cofiber $C_{a\eta}$ of $a\eta$. Moreover, $\Sigma f$ is homotopic to $\bar{\eta}_1$, so $\Sigma(p_1\circ f)$ is homotopic to $\Sigma\eta$ and $a$ is an odd number. It follows that $\Sigma C_{\phi}$ is homotopy equivalent to the cofiber of $\Sigma\eta$, which is $\Sigma\C\PP^2$.
\end{proof}

We can modify the result of Lemma~\ref{gauge_1-conn} a bit better.

\begin{lemma}\label{CP_a}
Let $a$ be an odd number. Then we have
\[
\G_t(C_{a\eta})\times\Omega^2G\simeq\G_t(\C\PP^2)\times\Omega^2G.
\]
\end{lemma}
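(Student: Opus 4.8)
The plan is to realize both $\G_t(\C\PP^2)\times\Omega^2G$ and $\G_t(C_{a\eta})\times\Omega^2G$ as the gauge group of a single auxiliary $4$-complex, and then to compare. Let $S^2_1$ and $S^2_2$ be two copies of $S^2$, let $\eta_i:S^3\to S^2_i\hookrightarrow S^2_1\vee S^2_2$ be the composites of the Hopf map with the inclusions as in Lemma~\ref{suspended f as sum}, and set $f=\eta_1+a\eta_2$. Define $M$ to be the cofiber of $f$, so that $M_3\simeq S^2_1\vee S^2_2$ and the $4$-cell is attached by $f$. Since $\bar{\eta}$ has order $2$ and $a$ is odd, $\Sigma f\simeq\bar{\eta}_1+a\bar{\eta}_2\simeq\bar{\eta}_1+\bar{\eta}_2$. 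The point of choosing $f$ this way is that collapsing one $2$-sphere recovers $\C\PP^2$ and collapsing the other recovers $C_{a\eta}$, while $\Sigma f$ is annihilated by the fold map.

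First I would apply Theorem~\ref{gaugedecomp} with $Y=S^2_2$ and $\varphi:S^2_2\hookrightarrow M_3$ the inclusion. The crucial observation is that the left homotopy inverse of $\Sigma\varphi$ required in Lemma~\ref{2_Y criteria}(2) should be taken to be the \emph{fold} map $\psi':\Sigma S^2_1\vee\Sigma S^2_2\to\Sigma S^2_2$ (the identity on $\Sigma S^2_2$ and the canonical identification on $\Sigma S^2_1$), not the pinch map. Indeed $\psi'\circ\Sigma f\simeq\bar{\eta}+\bar{\eta}=2\bar{\eta}$ is null-homotopic, whereas the pinch map would send $\Sigma f$ to $\bar{\eta}\neq0$. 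Hence $\varphi$ satisfies Lemma~\ref{2_Y criteria}, and the cofiber is $C_{\phi}\simeq M/S^2_2\simeq S^2_1\cup_{p_1\circ f}e^4=C_{\eta}=\C\PP^2$, where $p_1$ is the pinch onto $S^2_1$ and $p_1\circ f\simeq\eta$. Theorem~\ref{gaugedecomp} then gives $\G_t(M)\simeq\G_t(\C\PP^2)\times\map^*(S^2,G)=\G_t(\C\PP^2)\times\Omega^2G$.

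The symmetric argument with $Y=S^2_1$ and the fold map onto $\Sigma S^2_1$ shows, in exactly the same way, that $C_{\phi'}\simeq M/S^2_1\simeq C_{a\eta}$, and hence $\G_t(M)\simeq\G_t(C_{a\eta})\times\Omega^2G$. Comparing the two decompositions of $\G_t(M)$ yields the claimed equivalence. The main step to get right is the choice of left inverse: the essential feature is that $\bar{\eta}_1+\bar{\eta}_2$ is killed by neither pinch map but is killed by the fold, which is precisely what lets the \emph{same} space decompose in two different ways. A secondary point is to verify that the integer $t$ is preserved under both decompositions; this holds because, by Lemma~\ref{2_map_t}, the quotient maps induce isomorphisms on $H^4$ sending generator to generator — collapsing a $2$-cell leaves the top cell, and hence the fundamental class, untouched — so the index $t$ on $M$ matches the index $t$ on $\C\PP^2$ and on $C_{a\eta}$.
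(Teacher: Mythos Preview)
Your proof is correct and follows the same overall strategy as the paper---build an auxiliary two-cell complex with attaching map $\eta_1+a\eta_2$ and decompose its gauge group in two ways---but you execute the key step differently. The paper first invokes Lemma~\ref{2_simplify Sigma f} to replace the attaching map by one whose suspension is $\bar{\eta}_1$, so that the ordinary \emph{pinch} onto $\Sigma S^2_2$ serves as a left inverse; it then repeats the whole construction for the swapped map $g'=a\eta_1+\eta_2$, obtains a second space $C_{g'}$, and finally argues $C_g\simeq C_{g'}$ via the transposition. You bypass all of this by keeping the original attaching map and choosing the \emph{fold} map as the left inverse in Lemma~\ref{2_Y criteria}(2), exploiting that $(1+a)\bar{\eta}=0$; this lets the \emph{same} space $M$ decompose directly in both ways. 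Your route is shorter and avoids the dependence on Lemma~\ref{2_simplify Sigma f} (and hence on the self-equivalences $\xi_j$), at the cost of the small extra observation that the left inverse in Lemma~\ref{2_Y criteria} need not be the pinch. Your remark about the preservation of the index $t$ via Lemma~\ref{2_map_t} is also correct and is handled implicitly in the paper by the same mechanism.
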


\begin{proof}
Let $g$ be the composition
\[
g:S^3\overset{\sigma}{\longrightarrow}S^3\vee S^3\overset{\eta\vee a\eta}{\longrightarrow}S^2\vee S^2
\]
and let $C_g$ be its cofiber. To distinguish the 2-spheres in the range, denote the $i^{\text{th}}$ copy by~$S^2_i$. Now we calculate $\G_t(C_g)$. By Lemma~\ref{2_simplify Sigma f}, we can assume that $\Sigma g$ is homotopic to $\bar{\eta}_1$ and~$p_1\circ g$ is homotopic to $\eta$. Use Theorem~\ref{gaugedecomp} by taking $Y$ to be $S^2_2$ and $\varphi:S^2_2\to S^2_1\vee S^2_2$ to be the inclusion and obtain $\G_t(C_g)\simeq\G_t(C_g/S^2_2)\times\Omega^2G$. Consider the homotopy cofibration diagram
\[\xymatrix{
\ast\ar[d]\ar[r]			&S^2_2\ar[d]^-{j}\ar@{=}[r]			&S^2_2\ar[d]^-{\varphi}\\
S^3\ar@{=}[d]\ar[r]^-{g}	&S^2_1\vee S^2_2\ar[r]\ar[d]^-{p_1}	&C_{g}\ar[d]\\
S^3\ar[r]^-{\eta}			&S^2_1\ar[r]						&\C\PP^2,
}\]
where $j$ is the inclusion and $p_1$ is the pinch map. The right column implies that $C_g/S^2_2$ is homotopy equivalent to $\C\PP^2$, so we have
\[
\G_t(C_g)\simeq\G_t(\C\PP^2)\times\Omega^2G.
\]

Similarly, let $g'$ be composition
\[
g':S^3\overset{\sigma}{\longrightarrow}S^3\vee S^3\overset{a\eta\vee\eta}{\longrightarrow}S^2_1\vee S^2_2
\]
and let $C_{g'}$ be its cofiber. By Lemma~\ref{2_simplify Sigma f}, $\Sigma g'$ is $\bar{\eta}_1$ and $p_1\circ g'$ is homotopic to $a\eta$ and~$p_1\circ g'$ is homotopic to $a\eta$. Use Theorem~\ref{gauge decomp_free} by taking $Y$ to be $S^2_2$ and $\varphi:S^2_2\to S^2_1\vee S^2_2$ and obtain~\mbox{$\G_t(M)\simeq\G_t(C_{g'}/S^2_2)\times\Omega^2G$}. Since $C_{g'}/S^2_2$ is homotopy equivalent to $C_{a\eta}$, we have
\[
\G_t(C_{g'})\simeq\G_t(C_{a\eta})\times\Omega^2G.
\]

Observe that $g=T\circ g'$, where $T:S^2_1\vee S^2_2\to S^2_2\vee S^2_1$ is the swapping map. Since $T$ is a homotopy equivalence, $C_{g'}$ and $C_g$ are homotopy equivalent. Combining the two homotopy equivalences gives the asserted lemma.
\end{proof}

\begin{prop}\label{2_eg_pi1=0}
Suppose that $M$ is a 4-dimensional simply-connected Poincar\'{e}-complex. Let the rank of $H^2(M)$ be $n$. If $\Sigma f$ is null-homotopic, then there are homotopy equivalences
\[
\begin{array}{c c c}
\displaystyle{\Sigma M\simeq S^5\vee(\bigvee^n_{i=1}S^3)}
&\text{and}
&\displaystyle{\G_t(M)\simeq\G_t(S^4)\times\prod^n_{i=1}\Omega^2G}.
\end{array}
\]
If $\Sigma f$ is nontrivial, then there are homotopy equivalences
\[
\begin{array}{c c c}
\displaystyle{\Sigma M\simeq\Sigma\C\PP^2\vee(\bigvee^{n-1}_{i=1}S^3)}
&\text{and}
&\displaystyle{\G_t(M)\simeq\G_t(\C\PP^2)\times\prod^{n-1}_{i=1}\Omega^2G}.
\end{array}
\]
\end{prop}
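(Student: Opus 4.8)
The plan is to split on whether $\Sigma f$ is null-homotopic and to assemble each conclusion from the lemmas already developed in this subsection, invoking Poincar\'e duality only at the single point where the general construction runs short.

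\emph{Case $\Sigma f$ null-homotopic.} Since $M$ is simply-connected, $M_3 \simeq \bigvee^n_{i=1} S^2$. First I would apply Theorem~\ref{gaugedecomp} with $Y = M_3$ and $\varphi = \one_{M_3}$: condition~(2) of Lemma~\ref{2_Y criteria} holds because $\Sigma\one$ is its own left homotopy inverse and $\one \circ \Sigma f = \Sigma f$ is null-homotopic by assumption. The cofiber is $C_\phi = M/M_3 \simeq S^4$, so Theorem~\ref{gaugedecomp} yields $\Sigma M \simeq \Sigma S^4 \vee \Sigma\bigl(\bigvee^n S^2\bigr)$ and $\G_t(M) \simeq \G_t(S^4) \times \map^*\bigl(\bigvee^n S^2, G\bigr)$. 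Rewriting $\Sigma S^4 = S^5$, $\Sigma\bigl(\bigvee^n S^2\bigr) = \bigvee^n S^3$ and $\map^*\bigl(\bigvee^n S^2, G\bigr) \simeq \prod^n \map^*(S^2, G) = \prod^n \Omega^2 G$ gives the first pair of equivalences.

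\emph{Case $\Sigma f$ nontrivial.} Here Lemma~\ref{gauge_1-conn} does most of the work: it already provides $\Sigma M \simeq \Sigma\C\PP^2 \vee \bigl(\bigvee^{n-1} S^3\bigr)$ and $\G_t(M) \simeq \G_t(C_{a\eta}) \times \prod^{n-1} \Omega^2 G$ for some odd $a$. The only remaining task is to upgrade $\G_t(C_{a\eta})$ to $\G_t(\C\PP^2)$. When $n \geq 2$ there is a spare $\Omega^2 G$ factor, and splitting it off in order to apply Lemma~\ref{CP_a}, namely $\G_t(C_{a\eta}) \times \Omega^2 G \simeq \G_t(\C\PP^2) \times \Omega^2 G$, replaces $C_{a\eta}$ by $\C\PP^2$ while keeping the total of $n-1$ loop factors intact.

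The main obstacle is the boundary case $n = 1$, where Lemma~\ref{gauge_1-conn} leaves $\G_t(M) \simeq \G_t(C_{a\eta})$ with no $\Omega^2 G$ factor to feed into Lemma~\ref{CP_a}. This is precisely where the Poincar\'e-complex hypothesis enters: for $n = 1$ the cup product on $H^2_{\text{free}}(M) \cong \Z$ is unimodular (the remark following Lemma~\ref{q isom on H free}), so the self-cup-square of a generator, which is exactly the Hopf invariant $a$ produced in Lemma~\ref{gauge_1-conn}, must be $\pm 1$. Hence $C_{a\eta} \simeq C_{\pm\eta} \simeq \C\PP^2$ outright and $\G_t(M) \simeq \G_t(\C\PP^2)$. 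Combining $n = 1$ with $n \geq 2$ finishes the second case, and the proposition follows.
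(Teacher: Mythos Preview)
Your proposal is correct and follows essentially the same route as the paper's proof: in the null-homotopic case apply Theorem~\ref{gaugedecomp} with $Y=M_3$, and in the nontrivial case invoke Lemma~\ref{gauge_1-conn}, then Lemma~\ref{CP_a} when $n\ge 2$ and the Poincar\'e-duality argument forcing Hopf invariant $\pm 1$ when $n=1$. The only cosmetic difference is that the paper phrases the $n=1$ step as ``$f$ has Hopf invariant $\pm 1$, so $M\simeq\C\PP^2$'' directly, whereas you route it through the integer $a$ of Lemma~\ref{gauge_1-conn}; since for $n=1$ the pinch map $p_1$ is the identity and hence $a$ \emph{is} the Hopf invariant of $f$, the two are the same.
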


\begin{proof}
If $\Sigma f$ is null-homotopic, then use Theorem~\ref{gaugedecomp} by taking $Y$ to be $M_3$ and $\varphi:Y\to M_3$ to be the identity map and get the first two homotopy equivalences.

If $\Sigma f$ is nontrivial, by Lemma~\ref{gauge_1-conn} there are homotopy equivalences
\[
\begin{array}{c c c}
\displaystyle{\Sigma M\simeq\Sigma\C\PP^2\vee(\bigvee^{n-1}_{i=1}S^3)}
&\text{and}
&\displaystyle{\G_t(M)\simeq\G_t(C_{a\eta})\times\prod^{n-1}_{i=1}\Omega^2G},
\end{array}
\]
where $a$ is an odd integer. It suffices to show that we can replace $\G_t(C_{a\eta})$ by $\G_t(\C\PP^2)$.

When $n=1$, the Poincar\'{e} complex condition implies that $f$ has Hopf invariant equal to~1 or~\mbox{-1}. Therefore~$M$ is homotopy equivalent to $\C\PP^2$ and the statement holds. When~\mbox{$n\geq2$}, there is at least one copy of $\Omega^2G$ on the right hand side. By Lemma~\ref{CP_a}, we can replace~$\G_t(C_{a\eta})$ by~$\G_t(\C\PP^2)$ to obtain the proposition.
\end{proof}

It would be interesting to know if Lemma~\ref{CP_a} can be improved to $\G_t(C_{a\eta})\simeq\G_t(\C\PP^2)$. Observe that Proposition~\ref{2_eg_pi1=0} is an improvement on the homotopy equivalence~(\ref{simply_conn_decomp}) for the non-spin case in \cite{theriault10a}, which gives a decomposition only after localization away from 2.

\section{Gauge groups over non-simply-connected 4-manifolds}

From now on we assume that $M$ is an orientable, smooth, connected, closed 4 manifold. By Morse Theory, $M$ admits a CW-structure with one 4-cell \cite[Theorem 3.35]{matsumoto}. In this section we calculate the homotopy types of $\Sigma M$ and $\G_t(M)$ when $\pi_1(M)$ is (1) a free group~$\Z^{*m}$, (2) a cyclic group $\Z/p^r\Z$, or (3) a free product of types~\mbox{$(\Z^{*m})*(*^n_{j=1}\Z/p_j^{r_j}\Z)$}, where $p$ and the~$p_j$'s are odd primes. Our strategy is to apply Theorem~\ref{gaugedecomp} and its variations to decompose $\G_t(M)$ into a product of a gauge group of a simply-connected space, whose homotopy type is worked out in Proposition~\ref{2_eg_pi1=0}, and some complementary factors that do not depend on~$t$.

\subsection{The case when $\pi_1(M)=\Z^{*m}$}
When $\pi_1(M)$ is a free group, $M_3$ is homotopy equivalent to a wedge sum of spheres \cite{katanaga}
\[
M_3\simeq(\bigvee^m_{i=1}S^3)\vee(\bigvee^n_{j=1}S^2)\vee(\bigvee^m_{k=1}S^1).
\]
Using Theorem~\ref{gaugedecomp} we can calculate the homotopy types of $\Sigma M$ and $\G_t(M)$.

\begin{thm}\label{gauge decomp_free}
Suppose $\pi_1(M)\cong\Z^{*m}$. Let the rank of $H^2(M)$ be $n$. If $\Sigma f$ is null-homotopic, then there are homotopy equivalences
\[
\begin{array}{c}
\displaystyle{\Sigma M\simeq S^5\vee(\bigvee^m_{i=1}S^4)\vee(\bigvee^n_{j=1}S^3)\vee(\bigvee^m_{k=1}S^2)}\\[15pt]
\displaystyle{\G_t(M)\simeq\G_t(S^4)\times\prod^m_{i=1}\Omega^3G\times\prod^n_{j=1}\Omega^2G\times\prod^m_{k=1}\Omega G.}
\end{array}
\]
If $\Sigma f$ is nontrivial, then there are homotopy equivalences
\[
\begin{array}{c}
\displaystyle{\Sigma M\simeq\Sigma\C\PP^2\vee(\bigvee^m_{i=1}S^4)\vee(\bigvee^{n-1}_{j=1}S^3)\vee(\bigvee^m_{k=1}S^2)}\\[15pt]
\displaystyle{\G_t(M)\simeq\G_t(\C\PP^2)\times\prod^m_{i=1}\Omega^3G\times\prod^{n-1}_{j=1}\Omega^2G\times\prod^m_{k=1}\Omega G.}
\end{array}
\]
\end{thm}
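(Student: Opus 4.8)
The plan is to reduce to the simply-connected situation already resolved in Proposition~\ref{2_eg_pi1=0} by using Theorem~\ref{gaugedecomp} to peel off the odd-dimensional wedge summands of $M_3$. First I would pin down the suspended attaching map $\Sigma f\in\pi_4(\Sigma M_3)$, where
\[
\Sigma M_3\simeq\Big(\bigvee^m_{i=1}S^4_i\Big)\vee\Big(\bigvee^n_{j=1}S^3_j\Big)\vee\Big(\bigvee^m_{k=1}S^2_k\Big).
\]
By Hilton--Milnor together with the vanishing of suspended Whitehead products (exactly as in Lemma~\ref{suspended f as sum}), $\Sigma f$ is determined by its projections onto the wedge summands. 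The projection onto each $S^2_k$ is the suspension of the component of $f$ lying in $\pi_3(S^1_k)=0$, hence null; the projection onto each $S^4_i$ is the suspension of the degree of $f$ onto $S^3_i$, which vanishes because Poincar\'e duality forces $H_3(M)\cong\Z^m$, so that $f_*=0$ on $H_3(M_3)\cong\Z^m$. Thus $\Sigma f\simeq\sum^n_{j=1}a_j\bar\eta_j$ with $a_j\in\Z/2\Z$, supported entirely on the suspended $2$-spheres.

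Next I would split off $Y:=\big(\bigvee^m_{i=1}S^3\big)\vee\big(\bigvee^m_{k=1}S^1\big)$. Taking $\varphi:Y\hookrightarrow M_3$ to be the inclusion, its suspension has the pinch map $\psi'$ of $\Sigma M_3$ onto its $S^4$ and $S^2$ summands as a left homotopy inverse, and $\psi'\circ\Sigma f$ is null since $\Sigma f$ lands in the complementary $S^3$ summands. Hence $\varphi$ satisfies condition~(2) of Lemma~\ref{2_Y criteria}, and Theorem~\ref{gaugedecomp} yields
\[
\Sigma M\simeq\Sigma C_\phi\vee\Big(\bigvee^m_{i=1}S^4\Big)\vee\Big(\bigvee^m_{k=1}S^2\Big),\qquad \G_t(M)\simeq\G_t(C_\phi)\times\prod^m_{i=1}\Omega^3G\times\prod^m_{k=1}\Omega G,
\]
using $\map^*(S^3,G)\simeq\Omega^3G$ and $\map^*(S^1,G)\simeq\Omega G$ on the wedge $Y$.

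It then remains to identify $C_\phi$ and feed it into Proposition~\ref{2_eg_pi1=0}. Its $3$-skeleton is $C_\varphi=M_3/Y\simeq\bigvee^n_{j=1}S^2$, and its $4$-cell is attached by $f'=q'\circ f$; Lemma~\ref{lemma_f' and f} shows $\Sigma f'$ is null exactly when $\Sigma f$ is, so the two cases of the theorem correspond to the two cases for the attaching map of $C_\phi$. The cohomology splitting coming from $\Sigma\phi$ having a left inverse gives $H^*(C_\phi)$ concentrated in degrees $0,2,4$ with $H^2$ free of rank $n$, and Lemma~\ref{q isom on H free} together with Poincar\'e duality for $M$ shows that its cup product is unimodular, so $C_\phi$ is a simply-connected $4$-dimensional Poincar\'e complex. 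Applying Proposition~\ref{2_eg_pi1=0} then gives $\G_t(C_\phi)\simeq\G_t(S^4)\times\prod^n\Omega^2G$ with $\Sigma C_\phi\simeq S^5\vee\bigvee^nS^3$ when $\Sigma f$ is null, and $\G_t(C_\phi)\simeq\G_t(\C\PP^2)\times\prod^{n-1}\Omega^2G$ with $\Sigma C_\phi\simeq\Sigma\C\PP^2\vee\bigvee^{n-1}S^3$ when $\Sigma f$ is nontrivial; substituting into the displays above yields both asserted equivalences.

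The main obstacle, I expect, is the first step: correctly computing $\Sigma f$ and justifying that its only surviving components are the $\bar\eta_j$ on the suspended $2$-spheres. This is where the global hypotheses on $M$ (orientability and closedness, hence Poincar\'e duality) are used essentially, through the vanishing of $f_*$ on $H_3$, and where one must be careful that the $S^1$ summands really contribute nothing after suspension. The remaining steps are then formal applications of Theorem~\ref{gaugedecomp}, Lemma~\ref{lemma_f' and f}, Lemma~\ref{q isom on H free}, and Proposition~\ref{2_eg_pi1=0}.
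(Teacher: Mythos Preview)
Your proposal is correct and follows essentially the same route as the paper: peel off $Y=(\bigvee^m S^3)\vee(\bigvee^m S^1)$ by verifying that the pinch maps kill $\Sigma f$ (using $\pi_3(S^1)=0$ for the circles and the vanishing of $f$ on degree-$3$ (co)homology for the $3$-spheres), then feed the simply-connected cofiber into Proposition~\ref{2_eg_pi1=0} via Lemma~\ref{q isom on H free} and Lemma~\ref{lemma_f' and f}. The only cosmetic differences are that the paper packages the splitting step through Lemma~\ref{2_Y1 vee Y2} rather than invoking Theorem~\ref{gaugedecomp} directly, and argues $f^*=0$ on $H^3$ rather than your dual $f_*=0$ on $H_3$; your Hilton--Milnor preamble is more than is strictly needed (only the vanishing of the individual pinch compositions matters for condition~(2) of Lemma~\ref{2_Y criteria}), but it does no harm.
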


\begin{proof}
Denote the $i^{\text{th}}$ copy of $S^3$ in $M_3$ by $S^3_i$ and the $k^{\text{th}}$ copy of $S^1$ by $S^1_k$. We show that the inclusions $\varphi^3_i:S^3_i\to M_3$ and $\varphi^1_k:S^1_k\to M_3$ satisfy the hypothesis of Lemma~\ref{2_Y1 vee Y2} for all $i$ and $k$.

Let $p^3_i:M_3\to S^3_i$ and $p^1_k:M_3\to S^1_k$ be the pinch maps. Then $\Sigma p^3_i$ and $\Sigma p^1_k$ are left homotopy inverses of $\Sigma\varphi^3_i$ and $\Sigma\varphi^1_k$. Moreover, $\Sigma p^3_i\circ\Sigma f\simeq\Sigma(p^3_i\circ f)$ is null-homotopic since~$f$ induces a trivial homomorphism $f^*:H^3(M_3)\to H^3(S^3)$, and $\Sigma p^1_k\circ\Sigma f\simeq\Sigma(p^1_k\circ f)$ is null-homotopic since $p^1_k\circ f$ is null-homotopic by $\pi_3(S^1)=0$. Apply Lemma~\ref{2_Y1 vee Y2} and get
\[
\begin{array}{c c c}
\displaystyle{\Sigma M\simeq\Sigma M'\vee(\bigvee^m_{i=1}S^4)\vee(\bigvee^m_{k=1}S^2)}
&\text{and}
&\displaystyle{\G_t(M)\simeq\G_t(M')\times\prod^m_{i=1}\Omega^3G\times\prod^m_{k=1}\Omega G.}
\end{array}
\]
where $M'$ is the cofiber of the inclusion $(\bigvee^m_{i=1}S^1)\vee(\bigvee^m_{k=1}S^3)\hookrightarrow M$. Observe that the 3-skeleton of $M'$ is homotopy equivalent to $\bigvee^n_{i=1}S^2$. Since $H^2(S^1_i)$ and $H^2(S^3_k)$ are zero, Lemma~\ref{q isom on H free} implies that $M'$ satisfies Poincar\'{e} Duality. Let $f'$ be the attaching map of the 4-cell in $M'_3$. By Proposition~\ref{2_eg_pi1=0}, if $\Sigma f'$ is null-homotopic, then we have
\[
\begin{array}{c c c}
\displaystyle{\Sigma M'\simeq S^5\vee(\bigvee^n_{i=1}S^3)}
&\text{and}
&\displaystyle{\G_t(M')\simeq\G_t(S^4)\times\prod^n_{i=1}\Omega^2G}.
\end{array}
\]
If $\Sigma f'$ is nontrivial, then we have
\[
\begin{array}{c c c}
\displaystyle{\Sigma M'\simeq\Sigma\C\PP^2\vee(\bigvee^{n-1}_{i=1}S^3)}
&\text{and}
&\displaystyle{\G_t(M')\simeq\G_t(\C\PP^2)\times\prod^{n-1}_{i=1}\Omega^2G}.
\end{array}
\]
By Lemma~\ref{lemma_f' and f} $\Sigma f'$ is null-homotopic if and only if $\Sigma f$ is null-homotopic. Combining these homotopy equivalences gives the theorem.
\end{proof}

\subsection{The case when $\pi_1(M)=\Z/p^r\Z$}
Recall that an $n$-dimensional Moore space~$P^n(k)$ is the cofiber of the degree-$k$ map \mbox{$S^{n-1}\overset{k}{\longrightarrow}S^{n-1}$} for $n\geq2$. With integral coefficients, $\tilde{H}_i(P^n(k))$ is $\Z/k\Z$ for $i=n-1$ and is zero otherwise. With mod-$k$ coefficients, $\tilde{H}_i(P^n(k);\Z/k\Z)$ is $\Z/k\Z$ for $i=n-1$ and $n$ and is zero otherwise. Let $u$ be a generator of $\tilde{H}_{n-1}(P^n(k))$. By the Universal Coefficient Theorem, $\tilde{H}_{n-1}(P^n(k);\Z/k\Z)$ is generated by the mod-$k$ reduction $\bar{u}$ of $u$, and $\tilde{H}_n(P^n(k);\Z/k\Z)$ is generated by $\bar{v}=\beta\bar{u}$, where $\beta$ is the Bockstein homomorphism.

For any space $X$, the mod-$k$ homotopy group $\pi_n(X;\Z/k\Z)$ is defined to be $[P^n(k), X]$. When $n\geq3$, $\pi_n(X;\Z/k\Z)$ has a group structure induced by the comultiplication of $P^n(k)$ and when $n\geq4$, $\pi_n(X;\Z/k\Z)$ is abelian. There are two associated homomorphisms: the mod-$k$ Hurewicz homomorphism
\[
\bar{h}:\pi_n(X;\Z/k\Z)\to H_n(X;\Z/k\Z),
\]
which is defined to be $\bar{h}(f)=f_*(\bar{v})$ for $f\in\pi_n(X;\Z/k\Z)$, and the homotopy Bockstein homomorphism
\[
\bar{\beta}_{\pi}:\pi_n(X;\Z/k\Z)\to\pi_{n-1}(X),
\]
which is defined to be $\bar{\beta}_{\pi}(f)=\imath^*\circ f$ and $\imath:S^{n-1}\to P^n(k)$ is the inclusion. They are compatible with the standard Hurewicz homomorphism $h$ and Bockstein homomorphisms $\beta$ in the commutative diagram \cite{neisendorfer1}
\begin{equation}\label{3.2_ker of Bockstein homo}
\xymatrix{
\cdots\ar[r]	&\pi_{n+1}(X;\Z/k\Z)\ar[r]^-{\beta_\pi}\ar[d]^-{\bar{h}}	&\pi_n(X)\ar[r]^-{k}\ar[d]^-{h}	&\pi_n(X)\ar[r]\ar[d]^-{h}	&\pi_n(X;\Z/k\Z)\ar[r]\ar[d]^-{\bar{h}}		&\cdots\\
\cdots\ar[r]	&H_{n+1}(X;\Z/k\Z)\ar[r]^-{\beta}							&H_n(X)\ar[r]^-{k}				&H_n(X)\ar[r]				&H_n(X;\Z/k\Z)\ar[r]				&\cdots
}
\end{equation}

For any map $g:P^3(k)\to P^3(k)$, let $C_g$ be its cofiber, and let $a$ and $b$ be generators of~$H^2(C_g;\Z/k\Z)$ and $H^4(C_g;\Z/k\Z)$. Then the mod-$k$ Hopf invariant $\bar{H}(g)\in\Z/k\Z$ is defined by the formula $a\cup a\equiv\bar{H}(g)b\pmod{k}$.

\begin{lemma}\cite[Corollary 11.12]{neisendorfer2}\label{mod p hurewicz SES}
Let $p$ be an odd prime and let $g:P^3(p^r)\to P^3(p^r)$ be a map in the kernel of $\bar{h}$. Then $g$ is null-homotopic if and only if its mod-$p^r$ Hopf invariant~$\bar{H}(g)$ is zero.
\end{lemma}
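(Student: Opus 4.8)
The plan is to treat the two implications separately, with the ``only if'' direction being quick and the ``if'' direction requiring an identification of the subgroup $\ker\bar{h}\subseteq[P^3(p^r),P^3(p^r)]$ together with a computation of $\bar{H}$ on it. For brevity write $P=P^3(p^r)$, let $\imath:S^2\to P$ be the inclusion of the bottom cell and $q:P\to S^3$ the pinch map, so that $S^2\xrightarrow{\imath}P\xrightarrow{q}S^3$ is a cofibration. For the easy direction, if $g\simeq\ast$ then $C_g\simeq P\vee\Sigma P=P^3(p^r)\vee P^4(p^r)$; since $a\in H^2(C_g;\Z/p^r)$ is pulled back along the retraction onto the $P^3(p^r)$-summand and $H^4(P^3(p^r);\Z/p^r)=0$, naturality of cup products gives $a\cup a=0$, i.e.\ $\bar{H}(g)=0$.

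For the converse, suppose $\bar{H}(g)=0$; I must show $g\simeq\ast$. First I would upgrade the hypothesis $g\in\ker\bar{h}$ to the statement that $g$ induces the zero map on all of $\tilde{H}_*(P;\Z/p^r)$. Indeed $\bar{h}(g)=g_*(\bar{v})=0$ and $\bar{v}=\beta\bar{u}$, so by naturality of the Bockstein $\beta$ (compare the compatibility in \eqref{3.2_ker of Bockstein homo}) one has $\beta(g_*\bar{u})=g_*(\beta\bar{u})=0$; writing $g_*\bar{u}=c\bar{u}$ with $c\in\Z/p^r$ and using that $\beta\bar{u}=\bar{v}$ generates $\tilde H_3(P;\Z/p^r)$ gives $c\bar{v}=0$, hence $c=0$ and $g_*\bar{u}=0$. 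In particular $g\circ\imath:S^2\to P$ is trivial on $H_2(-;\Z)\cong\Z/p^r$, and since $\pi_2(P)\cong H_2(P;\Z)$ by the Hurewicz theorem, $g\circ\imath\simeq\ast$. By the Puppe exactness of $[-,P]$ applied to the cofibration above, $g$ then factors as $g\simeq\hat{g}\circ q$ for some $\hat{g}\in\pi_3(P)$.

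Next I would identify $\ker\bar{h}$ as a group. The same Puppe sequence shows $\ker\bar h=\operatorname{im}(q^*:\pi_3(P)\to[P,P])$, and since the connecting map of the cofibration is the degree-$p^r$ self-map of $S^3$ we get $\ker q^*=p^r\pi_3(P)$, whence $\ker\bar h\cong\pi_3(P)/p^r\pi_3(P)$. The long exact sequence of the pair $(P,S^2)$ (using $\pi_3(P,S^2)\cong H_3(P,S^2)=\Z$ and that its boundary into $\pi_2(S^2)=\Z$ is multiplication by $p^r$) shows $\imath_*:\pi_3(S^2)\twoheadrightarrow\pi_3(P)$, so $\pi_3(P)$ is cyclic generated by $\imath\circ\eta$; a standard computation (e.g.\ from $H_*(\Omega P;\Z/p)$) gives $\pi_3(P)\cong\Z/p^r$ for $p$ odd. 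Hence $\ker\bar h$ is cyclic of order $p^r$, generated by $g_0=\imath\circ\eta\circ q$.

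It remains to show $\bar{H}:\ker\bar{h}\to\Z/p^r$ is injective, and this is the main obstacle. Two ingredients are needed. First, $\bar{H}$ is additive on $\ker\bar h$: expressing a sum through the comultiplication of the suspension $P=\Sigma P^2(p^r)$, the cross term in $\bar H(g_1+g_2)$ is bilinear in the degrees of $g_1,g_2$ on $H^2$ and $H^3$, which vanish on $\ker\bar h$, so $\bar H(g_1+g_2)=\bar H(g_1)+\bar H(g_2)$. Second, $\bar H(g_0)$ is a unit: since $\imath^*$ and $q^*$ are isomorphisms on $H^2$ and $H^3$ respectively with $\Z/p^r$-coefficients, naturality of the cup-product Hopf invariant reduces the computation to the classical Hopf invariant of $\eta$, which is $1$, so $\bar H(g_0)=\pm1$. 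Additivity together with $\bar H(g_0)$ a generator makes $\bar H$ an isomorphism $\Z/p^r\xrightarrow{\sim}\Z/p^r$, so $\bar H(g)=0$ forces $g\simeq\ast$. The delicate points --- proving $\pi_3(P)\cong\Z/p^r$ with generator detected by $\bar H$, and pinning down the additivity and the value $\bar H(g_0)=\pm1$ without sign or hidden $p$-power errors --- are exactly what the cited machinery of homotopy groups with coefficients packages, and would be the part demanding the most care.
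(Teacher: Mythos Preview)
The paper does not prove this lemma at all; it is quoted verbatim as Corollary~11.12 of Neisendorfer and used as a black box. Your proposal therefore cannot be compared against a proof in the paper, but it is a sound reconstruction of the argument underlying the citation.

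A few remarks on the outline. The identification $\ker\bar h=\operatorname{im}(q^{*})\cong\pi_{3}(P)/p^{r}\pi_{3}(P)$ is correct, and the fact that $\pi_{3}(P^{3}(p^{r}))\cong\Z/p^{r}$ for $p$ odd (generated by $\imath\circ\eta$) is indeed a standard ingredient from the Cohen--Moore--Neisendorfer analysis; this is exactly the input the cited machinery supplies. Your computation $\bar H(g_{0})=\pm1$ by naturality is justified: the commuting squares $g_{0}=\imath\eta\circ q$ and $\imath\eta=\imath\circ\eta$ give maps of cofibre sequences $C_{g_{0}}\to C_{\imath\eta}\leftarrow\C\PP^{2}$, and since $\imath^{*}$ and $q^{*}$ are isomorphisms on $H^{2}$ and $H^{3}$ respectively with $\Z/p^{r}$ coefficients, both comparison maps are isomorphisms on $H^{2}$ and $H^{4}$, so the cup square transports from $\C\PP^{2}$. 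The additivity argument is the loosest step as written, but your reason is the right one: because each $g_{i}\in\ker\bar h$ induces zero on $\tilde H_{*}(P;\Z/p^{r})$, the mixed term in the cup square on $C_{g_{1}+g_{2}}$ vanishes. In Neisendorfer's treatment this additivity is packaged by defining $\bar H$ through a James--Hopf map, which is a homomorphism on the nose; that is what the citation buys compared with your bare-hands cup-product approach.
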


Back to the calculation of $\G_t(M)$. When $\pi_1(M)=\Z/p^r\Z$, Poincar\'{e} Duality and the Universal Coefficient Theorem imply the homology groups of $M$ are as follows:
\[
H_i(M;\Z)=\begin{cases}
\Z							&i=0,4\\
\Z/p^r\Z					&i=1\\
\Z^{\oplus n}\oplus\Z/p^r\Z	&i=2\\
0							&\text{else}.
\end{cases}
\]
Now we calculate the homotopy type of $\Sigma M$ to find a possible subcomplex $Y$ satisfying the hypothesis of Theorem~\ref{gaugedecomp}.

\begin{lemma}\label{pi_Pm}
Let $p$ be an odd prime. Then $\pi_4(P^4(p^r))$ and $\pi_4(P^3(p^r))$ are trivial.
\end{lemma}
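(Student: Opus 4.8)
The plan is to reduce everything to a torsion statement and then run the homotopy exact sequence of a skeletal pair. First I would observe that both $P^4(p^r)$ and $P^3(p^r)$ are simply-connected CW-complexes whose reduced integral homology is $\Z/p^r\Z$ concentrated in a single degree (degree $3$ for $P^4(p^r)$ and degree $2$ for $P^3(p^r)$). Since this homology lies in the Serre class of finite abelian $p$-groups, the mod-$\mathcal{C}$ Hurewicz theorem implies that \emph{every} homotopy group of each space is a finite $p$-group. As $p$ is odd, it therefore suffices to show that each $\pi_4$ carries no odd torsion, i.e. to locate it inside a group whose only possible torsion is $2$-primary.

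For $P^4(p^r)=S^3\cup_{p^r}e^4$ I would use the pair $(P^4(p^r),S^3)$. This pair is $3$-connected, so the relative Hurewicz theorem gives $\pi_4(P^4(p^r),S^3)\cong H_4\cong\Z$, generated by the characteristic map of the top cell, and its boundary $\partial\colon\pi_4(P^4(p^r),S^3)\to\pi_3(S^3)\cong\Z$ is the class of the attaching map, namely multiplication by $p^r$, which is injective. The homotopy exact sequence of the pair then forces $\iota_*\colon\pi_4(S^3)\to\pi_4(P^4(p^r))$ to be surjective. Since $\pi_4(S^3)\cong\Z/2\Z$ while the target is a finite odd $p$-group, the image is trivial and $\pi_4(P^4(p^r))=0$.

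For $P^3(p^r)=S^2\cup_{p^r}e^3$ I would attempt the analogous argument with the pair $(P^3(p^r),S^2)$. Because $\pi_4(S^2)\cong\Z/2\Z$ maps trivially into the odd $p$-group $\pi_4(P^3(p^r))$, the exact sequence identifies $\pi_4(P^3(p^r))$ with $\ker\bigl(\partial\colon\pi_4(P^3(p^r),S^2)\to\pi_3(S^2)\bigr)$, so it is enough to show this relative group has no odd torsion. Here the pair is only $2$-connected, so instead of Hurewicz I would invoke the Blakers--Massey theorem: the excision map $\pi_4(P^3(p^r),S^2)\to\pi_4(S^3)\cong\Z/2\Z$ is surjective, with kernel the cyclic group generated by the relative Whitehead product $[\iota_{S^2},\Phi]$, where $\Phi$ generates $\pi_3(P^3(p^r),S^2)\cong\Z$. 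Its boundary is $\partial[\iota_{S^2},\Phi]=\pm[\iota_{S^2},p^r\iota_{S^2}]=\pm p^r[\iota_{S^2},\iota_{S^2}]=\pm 2p^r\eta\in\pi_3(S^2)\cong\Z$, which has infinite order; hence this kernel is infinite cyclic and $\pi_4(P^3(p^r),S^2)$ is an extension of $\Z/2\Z$ by $\Z$. Such a group has no odd torsion, so the finite odd $p$-group $\ker\partial$ must vanish, yielding $\pi_4(P^3(p^r))=0$.

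The hard part will be the $P^3(p^r)$ case: unlike for $P^4(p^r)$, the pair $(P^3(p^r),S^2)$ is not $3$-connected, so I cannot simply read off $\pi_4(P^3(p^r),S^2)$ from homology. The crux is to pin down this relative group tightly enough to see that it is torsion-free away from the prime $2$, which relies on the precise form of the Blakers--Massey obstruction (that the excision kernel is exactly the Whitehead-product class) together with the classical identity $[\iota_{S^2},\iota_{S^2}]=\pm2\eta$. I expect the boundary computation $\partial[\iota_{S^2},\Phi]=\pm2p^r\eta$, and the resulting injectivity of $\partial$, to be the step demanding the most care.
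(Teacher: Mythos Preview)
Your argument is correct. For $\pi_4(P^4(p^r))$ you and the paper run essentially the same exact sequence of the pair $(P^4(p^r),S^3)$, with the torsion input supplied by Serre classes in your case and by localization in the paper's.

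For $\pi_4(P^3(p^r))$ the two proofs diverge. The paper does \emph{not} use the pair $(P^3(p^r),S^2)$; instead it passes to the homotopy fibre $F^3\{p^r\}$ of the pinch map $P^3(p^r)\to S^3$, observes that $\pi_4(P^3(p^r))\cong\pi_4(F^3\{p^r\})$ at odd primes, and then invokes Neisendorfer's $p$-local splitting of $\Omega F^3\{p^r\}$ as $S^1\times\Omega\Sigma(\bigvee_\alpha P^{n_\alpha}(p^r))\times\prod_j S^{2p^j-1}\{p^{r+1}\}$ to read off $\pi_3$ of each factor (the only nontrivial piece being $\pi_4(P^4(p^r))$, already shown to vanish). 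Your route via Blakers--Massey and the relative Whitehead product $[\iota_{S^2},\Phi]$ is more elementary: it replaces a deep structural decomposition by the classical identification of the excision kernel in the critical dimension together with the identity $[\iota_2,\iota_2]=\pm 2\eta$. The paper's method, by contrast, plugs into machinery that would also yield higher $\pi_n(P^3(p^r))$ if needed. One point worth flagging in your write-up is a precise reference for the form of Blakers--Massey that pins the kernel down as the cyclic group on $[\iota_{S^2},\Phi]$, since that sharpening (rather than mere surjectivity of excision) is what carries the weight.
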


\begin{proof}
After localizing away from $p$, any $P^n(p^r)$ is contractible so $\pi_4(P^4(p^r))$ and $\pi_4(P^3(p^r))$ are $p$-torsion. Localize at $p$ and consider the long exact sequence of homotopy groups for the pair~$(P^4(p^r), S^3)$:
\[
\cdots\longrightarrow\pi_4(S^3)\longrightarrow\pi_4(P^4(p^r))\overset{j_*}{\longrightarrow}\pi_4(P^4(p^r), S^3)\longrightarrow\cdots
\]
The pair $(P^4(p^r), S^3)$ is 3-connected, so $\pi_4(P^4(p^r), S^3)$ is $\Z$ by Hurewicz Theorem. Since~$\pi_4(S^3)$ is trivial at odd primes, $j_*$ is an injection. But~$\pi_4(P^4(p^r))$ is torsion, so this injection only makes sense if $\pi_4(P^4(p^r))$ is trivial.

Now we calculate $\pi_4(P^3(p^r))$. Let $F^3\{p^r\}$ be the homotopy fiber of the pinch map $P^3(p^r)\longrightarrow S^3$. Then $\pi_4(P^3(p^r))$ equals $\pi_4(F^3\{p^r\})$ since $\pi_4(\Omega S^3)$ and $\pi_4(S^3)$ are trivial at odd primes. By \cite[Proposition 11.7.1]{neisendorfer1}, there is a $p$-local homotopy equivalence
\[
\Omega F^3\{p^r\}\simeq S^1\times\Omega\Sigma(\bigvee_\alpha P^{n_\alpha}(p^r))\times\prod_jS^{2p^j-1}\{p^{r+1}\},
\]
where $n_\alpha$ is either~3 or greater than 4, and $S^n\{p^{r+1}\}$ is the homotopy fiber of the degree map $p^{r+1}:S^n\longrightarrow S^n$. Since $\pi_4(F^3\{p^r\})$ is $\pi_3(\Omega F^3\{p^r\})$, we need to calculate the third homotopy group of each factor on the right hand side. The first factor $\pi_3(S^1)$ and the last factor is trivial since $S^{2p^j-1}\{p^{r+1}\}$ is~3-connected for $j\geq1$. For the remaining factor of $\Omega F^3\{p^r\}$, consider the string of isomorphisms
\begin{eqnarray*}
\pi_3(\Omega\Sigma(\bigvee_\alpha P^{n_{\alpha}}(p^{r})))
&\cong&\pi_{3}(\prod_{\alpha}\Omega\Sigma P^{n_{\alpha}}(p^{r}))\\
&\cong&\pi_{3}(\Omega\Sigma P^{3}(p^{r}))\\
&\cong&\pi_{4}(P^{4}(p^{r}))\\
&\cong&0. 
\end{eqnarray*}
The first isomorphism is obtained from the Hilton-Milnor Theorem using dimension and connectivity considerations. The second isomorphism holds since only one $n_{\alpha}$ equals 3 while the rest are strictly larger than 4. The third isomorphism holds by adjunction, and the fourth isomorphism holds as we have already seen that $\pi_4(P^4(p^r))=0$. Therefore $\pi_4(F^3\{p^r\})$ is trivial and so is $\pi_4(P^3(p^r))$.
\end{proof}

\begin{lemma}\label{3_pi=Zp Sigma M_3}
If $\pi_1(M)\cong\Z/p^r\Z$, then there is a homotopy equivalence
\[
\Sigma M_3\simeq P^4(p^r)\vee(\bigvee^n_{i=1}S^3)\vee P^3(p^r).
\]
\end{lemma}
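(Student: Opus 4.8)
The plan is to exploit that $\Sigma M_3$ is a simply-connected suspension whose integral homology is concentrated in two degrees, and to realize it as the mapping cone of a single attaching map which I then force to be null-homotopic. First I would record the homology of $M_3$. The groups $H_i(M)$ are listed above; in particular $H_3(M)=0$. Since $M$ is built with a single $4$-cell and $H_4(M)=\Z$, that cell is a cycle, so removing it leaves $H_3$ unchanged and $H_3(M_3)=H_3(M)=0$. Thus $M_3$ has reduced homology $\Z/p^r\Z$, $\Z^{\oplus n}\oplus\Z/p^r\Z$, and $0$ in degrees $1$, $2$, and $\geq3$ respectively, so after suspension $\Sigma M_3$ is simply-connected with $\tilde{H}_2=\Z/p^r\Z$, $\tilde{H}_3=\Z^{\oplus n}\oplus\Z/p^r\Z$, and $\tilde{H}_i=0$ for $i\geq4$.

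Next I would apply the standard homology decomposition to the simply-connected complex $\Sigma M_3$. Its second homology section is a Moore space with $\tilde{H}_2=\Z/p^r\Z$, namely $P^3(p^r)$, and since $\tilde{H}_i$ vanishes for $i\geq4$ the decomposition terminates at the third stage: $\Sigma M_3$ is the cofiber of a $k$-invariant
\[
k:\Big(\bigvee^n_{i=1}S^2\Big)\vee P^3(p^r)\longrightarrow P^3(p^r),
\]
whose source is a Moore space with $\tilde{H}_2=\Z^{\oplus n}\oplus\Z/p^r\Z$. Because the suspension of this source is $\big(\bigvee^n_{i=1}S^3\big)\vee P^4(p^r)$, it suffices to prove that $k$ is null-homotopic; the cofiber then splits as $P^3(p^r)\vee\big(\bigvee^n_{i=1}S^3\big)\vee P^4(p^r)$, which is the asserted wedge.

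To show $k\simeq\ast$ I would analyse its two wedge components, using that $k$ induces the zero map on homology (a defining feature of the homology decomposition). On $\bigvee^n_{i=1}S^2$ each component lies in $\pi_2(P^3(p^r))$, and since $P^3(p^r)$ is simply-connected the Hurewicz map $\pi_2(P^3(p^r))\to H_2(P^3(p^r))=\Z/p^r\Z$ is an isomorphism; vanishing on $H_2$ therefore forces this restriction to be null-homotopic. Consequently $\Sigma M_3\simeq C_b\vee\big(\bigvee^n_{i=1}S^3\big)$, where $C_b$ is the cofiber of the remaining component $b:P^3(p^r)\to P^3(p^r)$. Since $b$ induces zero on $H_2(-;\Z)$, naturality of the Bockstein gives $b_\ast(\bar{v})=\beta\,b_\ast(\bar{u})=0$, so $b$ lies in the kernel of the mod-$p^r$ Hurewicz homomorphism $\bar{h}$.

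The crux, and the step I expect to be the main obstacle, is ruling out a nontrivial self-map $b$ with trivial homological effect, and here the suspension structure is decisive. As $C_b$ is a retract of the suspension $\Sigma M_3$, every cup product of positive-degree classes in $\tilde{H}^\ast(C_b;\Z/p^r\Z)$ vanishes; in particular the self-cup-product of the degree-$2$ generator is zero, which is precisely the statement that the mod-$p^r$ Hopf invariant $\bar{H}(b)$ vanishes. Lemma~\ref{mod p hurewicz SES}, applied to $b\in\ker\bar{h}$, then gives $b\simeq\ast$, so $C_b\simeq P^3(p^r)\vee P^4(p^r)$ and the claimed homotopy equivalence follows. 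The vanishing $\pi_4(P^3(p^r))=\pi_4(P^4(p^r))=0$ of Lemma~\ref{pi_Pm} is available throughout to discard any spurious $\pi_4$-obstructions when identifying the homology sections and their attaching map with maps between these Moore spaces.
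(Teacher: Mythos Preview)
Your argument is essentially the paper's own proof. Both realize $\Sigma M_3$ as the cofiber of a map $\big(\bigvee^n_{i=1}S^2\big)\vee P^3(p^r)\to P^3(p^r)$ that is zero on homology (the paper phrases this via the minimal cell structure of \cite[Proposition~4H.3]{hatcher}, you via the homology decomposition; these coincide here), kill the sphere components by the Hurewicz isomorphism $\pi_2(P^3(p^r))\cong H_2(P^3(p^r))$, and then dispatch the remaining self-map $b:P^3(p^r)\to P^3(p^r)$ by showing $b\in\ker\bar h$ via the Bockstein and $\bar H(b)=0$ via the co-$H$ structure on the retract $C_b$ of $\Sigma M_3$, invoking Lemma~\ref{mod p hurewicz SES}. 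Your closing remark about Lemma~\ref{pi_Pm} is unnecessary: no $\pi_4$-obstructions arise in this argument, and the paper does not use that lemma here.
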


\begin{proof}
Since $\Sigma M_3$ is simply-connected, it has a minimal cell structure. By \cite[Proposition 4H.3]{hatcher} $\Sigma M_3$ is homotopy equivalent to the cofiber of a map
\[
g:P^3(p^r)\vee(\bigvee^n_{i=1}S^2)\to P^3(p^r)
\]
such that $g$ induces a trivial homomorphism $g_*:H_2(P^3(p^r)\vee(\bigvee^n_{i=1}S^2))\to H_2(P^3(p^r))$. We claim that $g$ is null-homotopic. To distinguish the 2-spheres in the wedge sum, denote the~$i^{\text{th}}$ copy by $S^2_i$. Since
\[
[P^3(p^r)\vee(\bigvee^n_{i=1}S^2_i), P^3(p^r)]\cong[P^3(p^r), P^3(p^r)]\oplus\left(\bigoplus^n_{i=1}[S^2_i, P^3(p^r)]\right),
\]
we write $g=g'\oplus(\bigoplus^n_{i=1}g''_i)$, where $g'\in[P^3(p^r), P^3(p^r)]$ and $g''_i\in[S^2_i, P^3(p^r)]$.

Consider the commutative diagram
\[\xymatrix{
\pi_2(S^2_i)\ar[r]^-{g''_i}\ar[d]_-{h}	&\pi_2(P^3(p^r))\ar[d]_-{h}\\
H_2(S^2_i)\ar[r]^-{(g''_i)_*}			&H_2(P^3(p^r)).
}\]
Both Hurewicz homomorphisms $h$ are isomorphisms by Hurewicz Theorem. Since $g_*$ is trivial, so is $(g''_i)_*$. The diagram implies that $g''_i$ is null-homotopic. Therefore $\Sigma M_3$ is homotopy equivalent to $C_{g'}\vee(\bigvee^n_{i=1}S^3)$, where $C_{g'}$ is the cofiber of $g'$. It suffices to show that $g'$ is null-homotopic.

Consider the commutative diagram
\[\xymatrix{
\pi_3(P^3(p^r);\Z/p^r\Z)\ar[r]^-{\bar{h}}\ar[d]^-{\beta_\pi}	&H_3(P^3(p^r);\Z/p^r\Z)\ar[d]^-{\beta}\\
\pi_2(P^3(p^r))\ar[r]^-{h}										&H_2(P^3(p^r))
}\]
from~(\ref{3.2_ker of Bockstein homo}). The induced homomorphism $g'_*$ is trivial and we have
\[
h\circ\beta_\pi(g')=\beta\circ\bar{h}(g')=\beta\circ((g')_*v)=0.
\]
Observe that $\beta:H_3(P^3(p^r);\Z/p^r\Z)\to H_2(P^3(p^r))$ is an isomorphism in this case, so~$g'$ is in the kernel of $\bar{h}$. Since $C_{g'}$ retracts off the suspension $\Sigma M_3$, it is a co-H-space and~$H^*(C_{g'};\Z/p^r\Z)$ has trivial cup products. Therefore the mod-$p^r$ Hopf invariant $\bar{H}(g')$ is zero and~$g'$ is null-homotopic by Lemma~\ref{mod p hurewicz SES}.
\end{proof}

Lemma~\ref{3_pi=Zp Sigma M_3} says that $\Sigma M_3$ contains $P^3(p^r)\vee P^4(p^r)$ as its wedge summands. This, however, does not necessarily imply that $M_3$ contains $P^2(p^r)\vee P^3(p^r)$ since $M$ is not simply-connected.

\begin{lemma}\label{2 Moore sp P}
If $\pi_1(M)\cong\Z/p^r\Z$, then there exists a map $\epsilon:P^2(p^r)\to M_3$ satisfying the hypothesis of Theorem~\ref{gaugedecomp} and its cofiber $C_{\epsilon}$ is simply-connected.
\end{lemma}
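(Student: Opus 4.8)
The plan is to construct $\epsilon$ by extending a generator of $\pi_1(M_3)$ over the bottom cell of the Moore space, and then to verify the two clauses of Lemma~\ref{2_Y criteria}(2) together with the simple-connectivity of the cofiber. Since attaching the top $4$-cell does not alter the fundamental group, $\pi_1(M_3)\cong\pi_1(M)\cong\Z/p^r\Z$. Choose $\gamma:S^1\to M_3$ representing a generator. As $[\gamma]$ has order $p^r$, the composite $S^1\xrightarrow{p^r}S^1\xrightarrow{\gamma}M_3$ is null-homotopic, so $\gamma$ extends over $P^2(p^r)=S^1\cup_{p^r}e^2$ to a map $\epsilon:P^2(p^r)\to M_3$ restricting to $\gamma$ on $S^1$.

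First I would check that $C_\epsilon$ is simply-connected. Applying van Kampen's theorem to the mapping cone, $\pi_1(C_\epsilon)$ is the quotient of $\pi_1(M_3)$ by the normal closure of the image of $\epsilon_*:\pi_1(P^2(p^r))\to\pi_1(M_3)$. The inclusion $S^1\hookrightarrow P^2(p^r)$ generates $\pi_1(P^2(p^r))\cong\Z/p^r\Z$, and $\epsilon_*$ carries it to $[\gamma]$, a generator of $\pi_1(M_3)$; hence $\epsilon_*$ is onto and $\pi_1(C_\epsilon)=1$.

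The heart of the argument is producing a left homotopy inverse $\psi'$ of $\Sigma\epsilon:P^3(p^r)\to\Sigma M_3$. By Lemma~\ref{3_pi=Zp Sigma M_3} there is a homotopy equivalence $\Sigma M_3\simeq P^4(p^r)\vee(\bigvee^n_{i=1}S^3)\vee P^3(p^r)$; let $\pi:\Sigma M_3\to P^3(p^r)$ be the projection onto this last summand. On reduced integral homology $\Sigma\epsilon$ is the suspension of $\epsilon_*:\tilde{H}_1(P^2(p^r))\to\tilde{H}_1(M_3)$, which is an isomorphism $\Z/p^r\Z\to\Z/p^r\Z$ because $\gamma$ generates $H_1$. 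Since $\tilde{H}_2(\Sigma M_3)\cong\Z/p^r\Z$ is contributed entirely by the $P^3(p^r)$ summand, the self-map $\pi\circ\Sigma\epsilon$ of $P^3(p^r)$ induces an isomorphism on $\tilde{H}_2(-;\Z)$, the only nonzero reduced integral homology group of $P^3(p^r)$. As both spaces are simply-connected, Whitehead's theorem makes $\pi\circ\Sigma\epsilon$ a homotopy equivalence, and $\psi'=(\pi\circ\Sigma\epsilon)^{-1}\circ\pi$ is then a left homotopy inverse of $\Sigma\epsilon$.

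It remains to verify $\psi'\circ\Sigma f\simeq\ast$, where $f:S^3\to M_3$ is the attaching map of the $4$-cell. This composite represents an element of $\pi_4(P^3(p^r))$, which is trivial by Lemma~\ref{pi_Pm}, so the nullity clause of Lemma~\ref{2_Y criteria}(2) holds automatically. With both clauses of condition (2) established, $\imath\circ\epsilon$ satisfies the hypothesis of Theorem~\ref{gaugedecomp}, as required. The main obstacle is the third step---exhibiting $\Sigma\epsilon$ as a split summand of $\Sigma M_3$---but Lemma~\ref{3_pi=Zp Sigma M_3} reduces it to a homology calculation together with Whitehead's theorem; and it is precisely to dispatch the final step that $\pi_4(P^3(p^r))=0$ was established in Lemma~\ref{pi_Pm}.
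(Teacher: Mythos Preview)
Your proof is correct and follows the same strategy as the paper: construct $\epsilon$ by extending a generator of $\pi_1(M_3)$, project $\Sigma M_3$ onto the $P^3(p^r)$ summand of Lemma~\ref{3_pi=Zp Sigma M_3} and check the composite with $\Sigma\epsilon$ is a self-equivalence, then kill $\psi'\circ\Sigma f$ via $\pi_4(P^3(p^r))=0$ from Lemma~\ref{pi_Pm}. The only differences are cosmetic---you compute $\pi_1(C_\epsilon)$ directly by van Kampen rather than through $H_1$ (tacitly using that $\pi_1(C_\epsilon)$, being a quotient of $\Z/p^r\Z$, is abelian), and you invoke Whitehead over $\Z$ rather than over $\Z/p^r\Z$ with a Bockstein step---but the substance is identical.
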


\begin{proof}
By the Cellular Approximation Theorem, $\pi_1(M_3)$ equals $\pi_1(M)\cong\Z/p^r\Z$. Let~\mbox{$j:S^1\to M_3$} represent a generator of $\pi_1(M_3)$. It has order $p^r$, so there exists an extension $\epsilon:P^2(p^r)\to M_3$.
\[\xymatrix{
S^1\ar[r]^-{p^r}	&S^1\ar[r]\ar[d]^-{j}	&P^2(p^r)\ar@{-->}[dl]^{\epsilon}\\
					&M_3					&
}\]
Since $\pi_1(M_3)$ is abelian, $H_1(M_3)$ is $\pi_1(M_3)\cong\Z/p^r\Z$ by Hurewicz Theorem and the induced map $\epsilon_*:H_1(P^2(p^r))\to H_1(M_3)$ is an isomorphism. Therefore the cofiber $C_{\epsilon}$ of $\epsilon$ has~$H_1(C_{\epsilon})=0$, implying that $C_{\epsilon}$ is simply-connected.

Now we show that $\Sigma\epsilon$ has a left homotopy inverse. Let $\psi$ be the composition
\[
\psi:\Sigma M_3\simeq P^4(p^r)\vee(\bigvee^n_{i=1}S^3_i)\vee P^3(p^r)\overset{pinch}{\longrightarrow}P^3(p^r).
\]
Observe that
\[
\begin{array}{c c c}
\psi_*:H_2(\Sigma M_3;\Z/p^r\Z)\to H_2(P^3(p^r);\Z/p^r\Z)
&\text{and}
&(\Sigma\epsilon)_*:H_2(P^3(p^r);\Z/p^r\Z)\to H_2(\Sigma M_3;\Z/p^r\Z)
\end{array}
\]
are isomorphism, so $(\psi\circ\Sigma\epsilon)_*:H_2(P^3(p^r);\Z/p^r\Z)\to H_2(P^3(p^r);\Z/p^r\Z)$ is an isomorphism. Then Bockstein homomorphism implies $(\psi\circ\Sigma\epsilon)_*:H_3(P^3(p^r);\Z/p^r\Z)\to H_3(P^3(p^r);\Z/p^r\Z)$ is an isomorphism. Therefore $\psi\circ\Sigma\epsilon$ is a homotopy equivalence and $\psi$ is a left homotopy inverse of $\Sigma\epsilon$.

Moreover, the composition $\psi\circ\Sigma f$ is null-homotopic since $\pi_4(P^3(p^r))$ is trivial by Lemma~\ref{pi_Pm}. Therefore $\epsilon$ satisfies the hypothesis of Theorem~\ref{gaugedecomp}.
\end{proof}

Denote the pointed mapping space $Map^*(P^n(p^r), G)$ by $\Omega^nG\{p^r\}$. This notation is justified since $Map^*(P^n(p^r), G)$ is the homotopy fiber of the power map $p^r:\Omega^nG\to\Omega^nG$. Now we calculate the homotopy type of $\G_t(M)$.

\begin{thm}\label{gauge decomp_torsion}
Suppose $\pi_1(M)\cong\Z/p^r\Z$ where $p$ is an odd prime. Let the rank of $H^2(M)$ be $n$. If $\Sigma f$ is null-homotopic, then there are homotopy equivalences
\[
\begin{array}{c}
\displaystyle{\Sigma M\simeq S^5\vee P^4(p^r)\vee(\bigvee^n_{i=1}S^3)\vee P^3(p^r)}\\[15pt]
\displaystyle{\G_t(M)\simeq\G_t(S^4)\times\Omega^3G\{p^r\}\times\prod^n_{i=1}\Omega^2G\times\Omega^2G\{p^r\}.}
\end{array}
\]
If $\Sigma f$ is nontrivial, then there are homotopy equivalences
\[
\begin{array}{c}
\displaystyle{\Sigma M\simeq\Sigma\C\PP^2\vee P^4(p^r)\vee(\bigvee^{n-1}_{i=1}S^3)\vee P^3(p^r)}\\[15pt]
\displaystyle{\G_t(M)\simeq\G_t(\C\PP^2)\times\Omega^3G\{p^r\}\times\prod^{n-1}_{i=1}\Omega^2G\times\Omega^2G\{p^r\}.}
\end{array}
\]
\end{thm}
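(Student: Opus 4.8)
The plan is to strip off the two torsion Moore spaces one at a time, reducing $M$ to a simply-connected Poincar\'{e} complex whose gauge group is already computed in Proposition~\ref{2_eg_pi1=0}. The new feature compared with the free case is that the two pieces cannot be removed simultaneously from $M_3$: as observed after Lemma~\ref{3_pi=Zp Sigma M_3}, the complex $M_3$ need not contain $P^2(p^r)\vee P^3(p^r)$, so a $P^3(p^r)$ only becomes accessible after $M$ has been made simply-connected.

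First I would apply Lemma~\ref{2 Moore sp P} to get a map $\epsilon\colon P^2(p^r)\to M_3$ satisfying the hypotheses of Theorem~\ref{gaugedecomp} with simply-connected cofiber. Composing with the inclusion $M_3\hookrightarrow M$ and applying Theorem~\ref{gaugedecomp} with $Y=P^2(p^r)$ gives
\[
\Sigma M\simeq\Sigma C_{\epsilon}\vee P^3(p^r)
\qquad\text{and}\qquad
\G_t(M)\simeq\G_t(C_{\epsilon})\times\Omega^2G\{p^r\},
\]
since $\Sigma P^2(p^r)=P^3(p^r)$ and $\map^*(P^2(p^r),G)=\Omega^2G\{p^r\}$; this produces the $P^3(p^r)$ wedge summand and the $\Omega^2G\{p^r\}$ factor. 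Here $C_{\epsilon}$ is the four-dimensional cofiber, with three-skeleton $C_{\varphi}=(C_{\epsilon})_3$ the cofiber of $\epsilon$. Combining the splitting of Lemma~\ref{2_Y criteria}(4) with Lemma~\ref{3_pi=Zp Sigma M_3} yields $\Sigma C_{\varphi}\simeq P^4(p^r)\vee(\bigvee^n_{i=1}S^3)$, whence $C_{\varphi}$ is simply-connected and three-dimensional with $H_2(C_{\varphi})\cong\Z^n\oplus\Z/p^r\Z$ and $H_3(C_{\varphi})=0$.

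Next I would pin down this three-skeleton and remove the remaining torsion. Since $\pi_2$ of a wedge of $2$-spheres agrees with $H_2$, the minimal cell structure \cite[Proposition 4H.3]{hatcher} forces the $3$-cell of $C_{\varphi}$ to attach by a degree-$p^r$ map onto a single sphere, so $(C_{\epsilon})_3\simeq(\bigvee^n_{i=1}S^2)\vee P^3(p^r)$. Let $\varphi'\colon P^3(p^r)\to(C_{\epsilon})_3$ be the inclusion of the wedge summand; its suspension has the pinch onto $P^4(p^r)$ as a left homotopy inverse. The key point is that the nullity clause of Lemma~\ref{2_Y criteria}(2) is automatic here: the composite of this pinch with the suspended attaching map $\Sigma f'$ of the top cell of $C_{\epsilon}$ lands in $\pi_4(P^4(p^r))$, which is trivial by Lemma~\ref{pi_Pm}. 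Thus $\varphi'$ satisfies the hypotheses of Theorem~\ref{gaugedecomp}, and applying it to $C_{\epsilon}$ gives
\[
\Sigma C_{\epsilon}\simeq\Sigma M'\vee P^4(p^r)
\qquad\text{and}\qquad
\G_t(C_{\epsilon})\simeq\G_t(M')\times\Omega^3G\{p^r\},
\]
where $M'$ is the cofiber; this supplies the $P^4(p^r)$ summand and the $\Omega^3G\{p^r\}$ factor.

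It remains to analyze $M'$, a simply-connected $4$-complex with $H_2(M')\cong\Z^n$ and three-skeleton $\bigvee^n_{i=1}S^2$. Because $H^2(P^2(p^r))$ and $H^2(P^3(p^r))$ are torsion, Lemma~\ref{q isom on H free} transports the unimodularity of the cup product from $M$ through $C_{\epsilon}$ to $M'$, so $M'$ is a simply-connected Poincar\'{e} complex and Proposition~\ref{2_eg_pi1=0} applies. Writing $f''$ for the attaching map of its top cell, two applications of Lemma~\ref{lemma_f' and f} show $\Sigma f''$ is null-homotopic exactly when $\Sigma f$ is. In the null-homotopic case Proposition~\ref{2_eg_pi1=0} gives $\Sigma M'\simeq S^5\vee(\bigvee^n_{i=1}S^3)$ and $\G_t(M')\simeq\G_t(S^4)\times\prod^n_{i=1}\Omega^2G$, and in the nontrivial case it gives $\Sigma M'\simeq\Sigma\C\PP^2\vee(\bigvee^{n-1}_{i=1}S^3)$ and $\G_t(M')\simeq\G_t(\C\PP^2)\times\prod^{n-1}_{i=1}\Omega^2G$. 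Substituting back through the two decompositions above yields the stated equivalences. I expect the main obstacle to be the middle step: one must correctly identify $(C_{\epsilon})_3$ and verify the hypotheses of Theorem~\ref{gaugedecomp} for the second splitting, the whole procedure being available only because $C_{\epsilon}$ is simply-connected and $\pi_4(P^4(p^r))=0$.
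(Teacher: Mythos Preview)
Your proposal is correct and follows essentially the same three-step strategy as the paper: peel off $P^2(p^r)$ via Lemma~\ref{2 Moore sp P}, then peel off $P^3(p^r)$ from the now simply-connected cofiber using $\pi_4(P^4(p^r))=0$, and finish with Proposition~\ref{2_eg_pi1=0} after invoking Lemma~\ref{q isom on H free} for the Poincar\'{e} duality condition. You are in fact slightly more careful than the paper in two places: you justify the identification $(C_\epsilon)_3\simeq(\bigvee^n S^2)\vee P^3(p^r)$ via the minimal cell structure and a basis change in $\pi_2$, whereas the paper simply asserts it; and you explicitly invoke Lemma~\ref{lemma_f' and f} twice to tie the null-homotopy of $\Sigma f''$ back to that of $\Sigma f$, a step the paper leaves implicit.
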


\begin{proof}
We decompose $\G_t(M)$ in the following steps.

\emph{Step 1}: By Lemma~\ref{2 Moore sp P}, there exists a map $\epsilon:P^2(p^r)\to M_3$ satisfying the hypothesis of Theorem~\ref{gaugedecomp}. Apply the theorem and get
\[
\begin{array}{c c c}
\Sigma M\simeq\Sigma M'\vee P^3(p^r)
&\text{and}
&\G_t(M)\simeq\G_t(M')\times\Omega^2G\{p^r\}.
\end{array}
\]
where $M'$ is the cofiber of $P^2(p^r)\overset{\epsilon}{\longrightarrow}M_3\hookrightarrow M$.

\emph{Step 2}: By Lemma~\ref{2 Moore sp P} $M'$ is simply-connected, so its 3-skeleton $M'_3$ is $(\bigvee^n_{i=1})S^2\vee P^3(p^r)$. We show that the inclusion $\varphi:P^3(p^r)\hookrightarrow M'_3$ satisfies the condition of Theorem~\ref{gaugedecomp}. The pinch map $\Sigma M'_3\to P^4(p^r)$ is a left homotopy inverse of $\Sigma\varphi$. Let $f'$ be the attaching map of the 4-cell in $M'$. Then the composition
\[
S^4\overset{\Sigma f'}{\longrightarrow}\Sigma M'_3\overset{pinch}{\longrightarrow}P^4(p^r)
\]
is null-homotopic since $\pi_4(P^4(p^r))$ is trivial by Lemma~\ref{pi_Pm}. Apply Theorem~\ref{gaugedecomp} and get
\[
\begin{array}{c c c}
\Sigma M'\simeq\Sigma M''\vee P^4(p^r)
&\text{and}
&\G_t(M')\simeq\G_t(M'')\times\Omega^3G\{p^r\}.
\end{array}
\]
where $M''$ is $M'/P^3(p^r)$.

\emph{Step 3}: By Lemma~\ref{3_pi=Zp Sigma M_3} the 3-skeleton of $M''$ is homotopy equivalent to $\bigvee^n_{i=1}S^2$. Since $H^2(P^2(p^r))$ and $H^2(P^3(p^r))$ are torsion, Lemma~\ref{q isom on H free} implies that $M''$ satisfies Poincar\'{e} Duality. Let $f''$ be the attaching map of the 4-cell in $M''$. By Proposition~\ref{2_eg_pi1=0}, if $\Sigma f''$ is null-homotopic, then
\[
\begin{array}{c c c}
\displaystyle{\Sigma M''\simeq S^5\vee(\bigvee^n_{i=1}S^3)}
&\text{and}
&\displaystyle{\G_t(M'')\simeq\G_t(S^4)\times\prod^n_{i=1}\Omega^2G}
\end{array}
\]
If $\Sigma f''$ is nontrivial, then
\[
\begin{array}{c c c}
\displaystyle{\Sigma M''\simeq\Sigma\C\PP^2\vee(\bigvee^{n-1}_{i=1}S^3)}
&\text{and}
&\displaystyle{\G_t(M'')\simeq\G_t(\C\PP^2)\times\prod^{n-1}_{i=1}\Omega^2G}
\end{array}
\]

\emph{Step 4}: Combining all the homotopy equivalences from Step~1 to~3 gives the theorem.
\end{proof}

\subsection{The case when $\pi_1=(\Z^{*m})*(*^n_{j=1}\Z/p_j^{r_j}\Z)$}
Suppose that $\pi_1(M)$ is $(\Z^{*m})*(*^n_{j=1}\Z/p_j^{r_j}\Z)$, where~$p_j$ is an odd prime. The Stable Decomposition Theorem \cite[Theorem 1.3]{KLT} implies that for some number $d$ there is a diffeomorphism
\begin{equation}\label{KLT decomp}
M\#_d(S^2\times S^2)\cong N\#(\#^n_{j=1}L_j),
\end{equation}
where $N$ and $L_j$'s are orientable, smooth, connected, closed 4-manifolds with $\pi_1(N)=\Z^{*m}$ and \mbox{$\pi_1(L_j)=\Z/p_j^{r_j}\Z$}. We will calculate the suspensions and gauge groups for both sides of isomorphism~(\ref{KLT decomp}).

\begin{lemma}\label{S2 x S2}
There are homotopy equivalences
\[
\begin{array}{c c c}
\displaystyle{\Sigma(M\#_d(S^2\times S^2))\simeq(\Sigma M)\vee(\bigvee^{2d}_{s=1}S^3)}
&\text{and}
&\displaystyle{\G_t(M\#_d(S^2\times S^2))\simeq\G_t(M)\times\prod^{2d}_{s=1}\Omega^2G.}
\end{array}
\]
\end{lemma}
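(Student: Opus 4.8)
The plan is to present $M\#_d(S^2\times S^2)$ as a single connected sum $M\# X'$ with $X'=\#_d(S^2\times S^2)$ and apply Lemma~\ref{connected sum} once. The manifold $X'$ is an orientable, smooth, connected, closed, simply-connected $4$-manifold with intersection form the $d$-fold hyperbolic form; by Morse theory it admits a CW-structure with one $0$-cell, $2d$ $2$-cells and one $4$-cell, so its $3$-skeleton is $X'_3\simeq\bigvee^{2d}S^2$ and the attaching map $f'$ of its $4$-cell is the sum $\sum_{s=1}^{d}[\iota_{2s-1},\iota_{2s}]$ of Whitehead products of the bottom inclusions. Since suspensions of Whitehead products are null-homotopic (as noted in the proof of Lemma~\ref{suspended f as sum}), $\Sigma f'$ is null-homotopic.

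To invoke Lemma~\ref{connected sum} I would take $Y=\ast$ on the $M$-side, with $\varphi\colon\ast\to M_3$ the basepoint inclusion (which satisfies the hypothesis of Theorem~\ref{gaugedecomp} vacuously), and $Y'=X'_3=\bigvee^{2d}S^2$ on the $X'$-side with $\varphi'=\one$. Then $\Sigma\varphi'=\one$ is its own left homotopy inverse $\psi'$, and $\psi'\circ\Sigma f'\simeq\Sigma f'$ is null-homotopic by the previous paragraph, so $\varphi'$ satisfies the hypothesis of Theorem~\ref{gaugedecomp}. Lemma~\ref{connected sum} then yields
\[
\Sigma(M\#_d(S^2\times S^2))\simeq\Sigma M'\vee\Sigma Y'
\qquad\text{and}\qquad
\G_t(M\#_d(S^2\times S^2))\simeq\G_t(M')\times\map^*(\Sigma Y',BG),
\]
where $M'$ is the cofiber of the inclusion $\bigvee^{2d}S^2\hookrightarrow M\#_d(S^2\times S^2)$. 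As $\Sigma Y'=\bigvee^{2d}S^3$, the complementary factor simplifies to $\map^*(\bigvee^{2d}S^3,BG)\simeq\prod^{2d}\map^*(S^3,BG)\simeq\prod^{2d}\Omega^2G$, giving exactly the $2d$ copies of $S^3$ and of $\Omega^2G$ required.

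The step I expect to be the main obstacle is the identification $M'\simeq M$. Following the proof of Lemma~\ref{connected sum}, the $3$-skeleton of $M\#_d(S^2\times S^2)$ is $M_3\vee\bigvee^{2d}S^2$ and its unique $4$-cell is attached by the sum (formed via the comultiplication of $S^3$) of the attaching map $f$ of $M$ and of $f'$. Collapsing $Y'=\bigvee^{2d}S^2$ carries this attaching map to $f$, since $f'$ factors through the collapsed wedge and dies while $f$ maps into $M_3$ and survives; hence $M'$ is $M_3$ with a $4$-cell reattached by $f$, that is $M'\simeq M$. Substituting $M'\simeq M$ into the two equivalences above gives the lemma.
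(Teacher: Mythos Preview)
Your proof is correct and follows essentially the same approach as the paper: apply Lemma~\ref{connected sum} with the basepoint inclusion on the $M$-side and the inclusion of the $3$-skeleton on the $(S^2\times S^2)$-side, then identify the resulting cofiber with $M$ via the attaching-map/cofibration argument. The only cosmetic difference is that the paper inducts to reduce to $d=1$ and verifies the hypothesis on $\varphi'$ by quoting the splitting $\Sigma(S^2\times S^2)\simeq S^5\vee S^3\vee S^3$, whereas you handle all $d$ at once and verify it by observing directly that $\Sigma f'$ vanishes as a suspension of Whitehead products; these are equivalent via Lemma~\ref{2_Y criteria}.
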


\begin{proof}
By induction it suffices to show the lemma for the $d=1$ case. We use Lemma~\ref{connected sum} to prove it. For $M$, the inclusion $\varphi:\{m_0\}\hookrightarrow M$ of the basepoint $m_0$ obviously satisfies the hypothesis of Theorem~\ref{gaugedecomp}. For $S^2\times S^2$, take the inclusion $\varphi':S^2\vee S^2\hookrightarrow S^2\times S^2$ to be the inclusion of the 3-skeleton. Since $\Sigma(S^2\times S^2)$ is homotopy equivalent to $S^5\vee S^3\vee S^3$, $\varphi'$ satisfies the hypothesis as well. Apply Lemma~\ref{connected sum} to get
\[
\begin{array}{c c c}
\displaystyle{\Sigma(M\#(S^2\times S^2))\simeq(\Sigma C_j)\vee(\bigvee^{2}_{s=1}S^3)}
&\text{and}
&\displaystyle{\G_t(M\#(S^2\times S^2))\simeq\G_t(C_j)\times\prod^{2}_{s=1}\Omega^2G,}
\end{array}
\]
where $C_j$ is the cofiber of the inclusion $j:\{m_0\}\vee(S^2\vee S^2)\hookrightarrow M\#(S^2\times S^2)$. Consider the cofibration diagram
\[\xymatrix{
\ast\ar[r]\ar[d]			&S^3\ar@{=}[r]\ar[d]^-{f}			&S^3\ar[d]^-{f_{M}}\\
S^2\vee S^2\ar[r]\ar@{=}[d]	&M_3\vee S^2\vee S^2\ar[r]\ar[d]	&M_3\ar[d]\\
S^2\vee S^2\ar[r]^-{j}		&M\#(S^2\times S^2)\ar[r]			&M
}\]
where $f_{M}$ is the attaching map of the 4-cell in $M$. The bottom row implies that $C_j$ is homotopy equivalent to $M$ and the asserted homotopy equivalences follow.
\end{proof}

\begin{thm}\label{gauge decomp_*}
Let $\pi_1(M)\cong(\Z^{*m})*(*^n_{j=1}\Z/p_j^{r_j}\Z)$ where $p_j$ is an odd prime and let $l$ be the rank of $H^2(M)$. Then there exists a number $d$ such that $\Sigma(M\#_d(S^2\times S^2))$ is homotopy equivalent to either
\[
\begin{array}{c c}
\displaystyle{S^5\vee(\bigvee^m_{i=1}S^4)\vee(\bigvee^n_{j=1}P^4(p_j^{r_j}))\vee(\bigvee^{l+2d}_{k=1}S^3)\vee(\bigvee^n_{j'=1}P^3(p_{j'}^{r_{j'}}))\vee(\bigvee^m_{i'=1}S^2)}	&\text{or}\\[15pt]
\displaystyle{\Sigma\C\PP^2\vee(\bigvee^m_{i=1}S^4)\vee(\bigvee^n_{j=1}P^4(p_j^{r_j}))\vee(\bigvee^{l+2d-1}_{k=1}S^3)\vee(\bigvee^n_{j'=1}P^3(p_{j'}^{r_{j'}}))\vee(\bigvee^m_{i'=1}S^2).}	&
\end{array}
\]
In the first case, we have
\[
\G_t(M)\times\prod^{2d}_{s=1}\Omega^2G\simeq\G_t(S^4)\times\prod^m_{i=1}\Omega^3G\times\prod^n_{j=1}\Omega^3G\{p_j^{r_j}\}\times\prod^{l+2d}_{k=1}\Omega^2G\times\prod^n_{j'=1}\Omega^2G\{p_{j'}^{r_{j'}}\}\times\prod^m_{i'=1}\Omega G.
\]
In the second case, we have
\[
\G_t(M)\times\prod^{2d}_{s=1}\Omega^2G\simeq\G_t(\C\PP^2)\times\prod^m_{i=1}\Omega^3G\times\prod^n_{j=1}\Omega^3G\{p_j^{r_j}\}\times\prod^{l+2d-1}_{k=1}\Omega^2G\times\prod^n_{j'=1}\Omega^2G\{p_{j'}^{r_{j'}}\}\times\prod^m_{i'=1}\Omega G.
\]
\end{thm}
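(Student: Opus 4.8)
The plan is to reduce the connected-sum side $N\#(\#^n_{j=1}L_j)$ of the diffeomorphism~(\ref{KLT decomp}) to a single simply-connected Poincar\'{e} complex by peeling off, in two rounds, the low-dimensional subcomplexes that carry the fundamental group, and then to use Lemma~\ref{S2 x S2} to transfer the result back to $M$. First I would invoke the Stable Decomposition Theorem to fix a number $d$ with $M\#_d(S^2\times S^2)\cong N\#(\#^n_{j=1}L_j)$, where $\pi_1(N)=\Z^{*m}$ and $\pi_1(L_j)=\Z/p_j^{r_j}\Z$. The $3$-skeleton of this connected sum is the wedge of the $3$-skeleta of the summands, and its top attaching map is the sum, through the iterated comultiplication of $S^3$, of the individual attaching maps; this is precisely the situation controlled by Lemma~\ref{connected sum} and its underlying Lemma~\ref{2_Y1 vee Y2}, which extend verbatim from two summands to the $n+1$ summands here.

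For the first round of peeling I would take, from the summand $N$, the subcomplex $(\bigvee^m S^1)\vee(\bigvee^m S^3)$ whose left homotopy inverse and nullity condition were already verified in the proof of Theorem~\ref{gauge decomp_free}, and from each summand $L_j$ the map $\epsilon_j\colon P^2(p_j^{r_j})\to (L_j)_3$ supplied by Lemma~\ref{2 Moore sp P}. Applying the iterated Lemma~\ref{connected sum} to all of these at once splits off the factors $\prod^m\Omega^3 G$, $\prod^m\Omega G$ and $\prod^n\Omega^2 G\{p_j^{r_j}\}$ and leaves a cofiber $W$. Because exactly the $S^1$'s and the $P^2(p_j^{r_j})$'s, which generate $\pi_1$, have been collapsed, $W$ is simply-connected; and since each collapsed piece has torsion or trivial $H^2$, Lemma~\ref{q isom on H free} guarantees that $W$ still satisfies Poincar\'{e} Duality. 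Its $3$-skeleton is now a wedge of the $n$ Moore spaces $P^3(p_j^{r_j})$ together with $l+2d$ copies of $S^2$, the Moore summands having become available only after the $P^2$'s were collapsed.

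In the second round I would peel the $P^3(p_j^{r_j})$'s off $W$ using Theorem~\ref{gaugedecomp}, exactly as in Step~2 of the proof of Theorem~\ref{gauge decomp_torsion}: the pinch onto $P^4(p_j^{r_j})$ is a left homotopy inverse of the inclusion, and $\pi_4(P^4(p_j^{r_j}))=0$ by Lemma~\ref{pi_Pm} annihilates the relevant attaching map. This contributes $\prod^n\Omega^3 G\{p_j^{r_j}\}$ and leaves a simply-connected Poincar\'{e} complex $W'$ whose $3$-skeleton is $\bigvee^{l+2d}S^2$. Proposition~\ref{2_eg_pi1=0} then identifies $\G_t(W')$ with $\G_t(S^4)\times\prod^{l+2d}\Omega^2 G$ when $\Sigma f''$ is null-homotopic and with $\G_t(\C\PP^2)\times\prod^{l+2d-1}\Omega^2 G$ otherwise, and it supplies the two corresponding suspension splittings. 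Assembling the factors from both rounds yields the decomposition of $\G_t(N\#(\#_j L_j))=\G_t(M\#_d(S^2\times S^2))$, and Lemma~\ref{S2 x S2} rewrites the left-hand side as $\G_t(M)\times\prod^{2d}\Omega^2 G$, which is the assertion; the suspension statements follow by collecting the $\Sigma Y$-summands produced at each peeling step.

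I expect the main obstacle to be bookkeeping rather than any single hard homotopy-theoretic input. One must check that the left homotopy inverses peeled off the separate summands remain compatible with the summed top attaching map of the connected sum, which is exactly what Lemma~\ref{connected sum} and Lemma~\ref{2_Y1 vee Y2} are designed to verify; that $W$ and $W'$ remain simply-connected Poincar\'{e} complexes after each collapse, so that Lemma~\ref{q isom on H free} and Proposition~\ref{2_eg_pi1=0} are applicable; and that the rank of $H^2$ is tracked correctly through the $(S^2\times S^2)$-stabilization and the successive collapses, so that precisely $l+2d$ (respectively $l+2d-1$) copies of $\Omega^2 G$ survive in the simply-connected core.
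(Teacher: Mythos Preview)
Your proposal is correct and follows essentially the same route as the paper's proof: both invoke the stable splitting~(\ref{KLT decomp}), use Lemma~\ref{connected sum} (iterated over all summands) together with the maps from Theorem~\ref{gauge decomp_free} and Lemma~\ref{2 Moore sp P} to strip off the $S^1$'s, $S^3$'s and $P^2(p_j^{r_j})$'s in a first round, then peel the $P^3(p_j^{r_j})$'s off the resulting simply-connected complex via Lemma~\ref{pi_Pm}, apply Proposition~\ref{2_eg_pi1=0} to the simply-connected core, and finally convert back to $M$ via Lemma~\ref{S2 x S2}. The only cosmetic differences are that the paper postpones the Poincar\'{e} Duality check (Lemma~\ref{q isom on H free}) until after both rounds rather than after each, and defers the identification $l''=l+2d$ to the last line, whereas you track the rank throughout.
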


\begin{proof}
Denote $N\#(\#^n_{j=1}L_j)$ by $X$. Using the stable decomposition~(\ref{KLT decomp}) and Lemma~\ref{S2 x S2}, we only need to calculate $\Sigma X$ and $\G_t(X)$. Let $N_3$ and $(L_j)_3$ be the 3-skeletons of $N$ and $L_j$. Then the 3-skeleton $X_3$ of $X$ is the wedge sum $N_3\vee(\bigvee^n_{j=1}(L_j)_3)$.

\emph{Step 1}: The 3-skeleton $N_3$ is homotopy equivalent to $(\bigvee^m_{i=1}S^3)\vee(\bigvee^{l'}_{j'=1}S^2)\vee(\bigvee^m_{k=1}S^1)$. Denote the $i^{\text{th}}$ copy of $S^3$ in $N_3$ by $S^3_i$ and the $k^{\text{th}}$ copy of $S^1$ by $S^1_k$. In the proofs of Theorem~\ref{gauge decomp_free}, we show that inclusions $\varphi^3_i:S^3_i\to N_3$ and $\varphi^1_k:S^1_k\to N_3$ satisfy the hypothesis of Theorem~\ref{gaugedecomp}. For each $j$, by Lemma~\ref{2 Moore sp P} there exists a map $\epsilon_j:P^2(p_j^{r_j})\to(L_j)_3$ satisfying the hypothesis as well. Apply Lemma~\ref{connected sum} and get
\[
\begin{array}{c}
\displaystyle{\Sigma X\simeq\Sigma X'\vee\left(\bigvee^m_{i=1}S^4\right)\vee\left(\bigvee^m_{k=1}S^2\right)\vee\left(\bigvee^n_{j=1}P^3(p_j^{r_j})\right)}\\
\displaystyle{\G_t(X)\simeq\G_t(X')\times\prod^m_{i=1}\Omega^3G\times\prod^m_{k=1}\Omega G\times\prod^n_{j=1}\Omega G\{p_j^{r_j}\}}
\end{array}
\]
where $X'$ is the cofiber of the inclusion $\left(\bigvee^m_{i=1}S^3\right)\vee\left(\bigvee^m_{k=1}S^1\right)\vee\left(\bigvee^n_{j=1}P^2(p_j^{r_j})\right)\hookrightarrow X$.

\emph{Step 2}: Since $X'$ is simply-connected, its 3-skeleton $X'_3$ has a minimal cell structure
\[
X_3\simeq\left(\bigvee^{l''}_{j'=1}S^2\right)\vee\left(\bigvee^n_{j=1}P^3(p_j^{r_j})\right).
\]
We show that inclusions $\varphi_j:P^3(p_j^{r_j})\to X'_3$ satisfy the hypothesis of Theorem~\ref{gaugedecomp}. For each~$j$, the pinch maps $\Sigma X'_3\to P^4(p_j^{r_j})$ is a left homotopy inverse of $\Sigma\varphi_j$. Let $f'$ be the attaching map of the 4-cell in $X'$. Then the composition
\[
S^4\overset{\Sigma f'}{\longrightarrow}\Sigma X'_3\overset{pinch}{\longrightarrow}P^4(p_j^{r_j})
\]
is null-homotopic since $\pi_4(P^3(p_j^{r_j}))$ is trivial by Lemma~\ref{pi_Pm}. Apply Lemma~\ref{2_Y1 vee Y2} and get
\[
\begin{array}{c c c}
\displaystyle{\Sigma X'\simeq\Sigma X''\vee\left(\bigvee^n_{j=1}P^3(p_j^{r_j})\right)}
&\text{and}
&\displaystyle{\G_t(X')\simeq\G_t(X'')\times\prod^n_{j=1}\Omega^2G\{p_j^{r_j}\},}
\end{array}
\]
where $X''$ is $X'/(\bigvee^n_{j=1}P^3(p_j^{r_j}))$.

\emph{Step 3}: The 3-skeleton of $X''$ is homotopy equivalent to $\bigvee^{l''}_{j'=1}S^2$. Since the subcomplexes $S^1_i, S^3_k, P^2(p_j^{r_j})$ and $P^3(p_j^{r_j})$ in Step 1 and 2 have either zero or torsion second cohomology groups, Lemma~\ref{q isom on H free} implies that $X''$ satisfies Poincar\'{e} Duality. By Proposition~\ref{2_eg_pi1=0}, we have
\[
\begin{array}{c c c}
\displaystyle{\Sigma X''\simeq S^5\vee\left(\bigvee^{l''}_{j'=1}S^3\right)}
&\text{and}
&\displaystyle{\G_t(X'')\simeq\G_t(S^4)\times\prod^{l''}_{j'=1}\Omega^2G}
\end{array}
\]
or 
\[
\begin{array}{c c c}
\displaystyle{\Sigma X''\simeq\Sigma\C\PP^2\vee\left(\bigvee^{l''-1}_{j'=1}S^3\right)}
&\text{and}
&\displaystyle{\G_t(X'')\simeq\G_t(\C\PP^2)\times\prod^{l''-1}_{j'=1}\Omega^2G.}
\end{array}
\]

\emph{Step 4}: Combining all homotopy equivalences from Step 1 to 3, the stable decomposition~(\ref{KLT decomp}) and Lemma~\ref{S2 x S2} together imply the theorem. Furthermore, $H_2(M\#_d(S^2\times S^2))$ has rank $l=2d$, so $l''=l+2d$.
\end{proof}

\begin{remark}
The proofs of Theorem~\ref{gauge decomp_free} and \ref{gauge decomp_torsion} are also valid for any orientable 4-dimensional CW-complex with one 4-cell, while Theorem~\ref{gauge decomp_*} requires the smoothness of $M$ for the stable diffeomorphism splitting in \cite{KLT}. One can ask under what condition the stabilizing factor~$\#_d(S^2\times S^2)$ can be cancelled so that the factors $\prod^{2d}_{s=1}\Omega^2G$ can be removed from the equation. If this can be achieved, Theorem~\ref{gauge decomp_free} and \ref{gauge decomp_torsion} will be corollaries of Theorem~\ref{gauge decomp_*} when $M$ is smooth.
\end{remark}

\end{document}